\begin{document}

\newtheorem{tm}{Theorem}[section]
\newtheorem{rk}{Remark}[section]
\newtheorem{prop}{Proposition}[section]
\newtheorem{defin}{Definition}[section]
\newtheorem{coro}{Corollary}[section]

\newtheorem{lem}{Lemma}[section]
\newtheorem{assumption}{Assumption}[section]

\newtheorem{nota}[tm]{Notation}
\numberwithin{equation}{section}

\newcommand{\stk}[2]{\stackrel{#1}{#2}}
\newcommand{\dwn}[1]{{\scriptstyle #1}\downarrow}
\newcommand{\upa}[1]{{\scriptstyle #1}\uparrow}
\newcommand{\nea}[1]{{\scriptstyle #1}\nearrow}
\newcommand{\sea}[1]{\searrow {\scriptstyle #1}}
\newcommand{\csti}[3]{(#1+1) (#2)^{1/ (#1+1)} (#1)^{- #1
 / (#1+1)} (#3)^{ #1 / (#1 +1)}}
\newcommand{\RR}[1]{\mathbb{#1}}

\newcommand{ \bl}{\color{blue}}
\newcommand {\rd}{\color{red}}
\newcommand{ \bk}{\color{black}}
\newcommand{ \gr}{\color{OliveGreen}}
\newcommand{ \mg}{\color{RedViolet}}

\newcommand{\ep}{\varepsilon}
\newcommand{\rr}{{\mathbb R}}
\newcommand{\alert}[1]{\fbox{#1}}

\newcommand{\eqd}{\sim}
\newcommand\PP{\ensuremath{\mathbb{P}}}
\def\R{{\mathbb R}}
\def\N{{\mathbb N}}
\def\Q{{\mathbb Q}}
\def\C{{\mathbb C}}
\def\l{{\langle}}
\def\r{\rangle}
\def\t{\tau}
\def\k{\kappa}
\def\a{\alpha}
\def\la{\lambda}
\def\De{\Delta}
\def\de{\delta}
\def\ga{\gamma}
\def\Ga{\Gamma}
\def\ep{\varepsilon}
\def\eps{\varepsilon}
\def\si{\sigma}
\def\Re {{\rm Re}\,}
\def\Im {{\rm Im}\,}
\def\E{{\mathbb E}}
\def\P{{\mathbb P}}
\def\Z{{\mathbb Z}}
\def\D{{\mathbb D}}
\newcommand{\ceil}[1]{\lceil{#1}\rceil}

\title{Long time behavior of random and nonautonomous  Fisher-KPP equations. Part I. Stability of equilibria and spreading speeds}

\author{
Rachidi B. Salako\\
Department of Mathematics\\
The Ohio State University\\
Columbus OH, 43210-1174\\
 and \\
 Wenxian Shen\thanks{Partially supported by the NSF grant DMS--1645673} \\
Department of Mathematics and Statistics\\
Auburn University\\
Auburn University, AL 36849 }

\date{}
\maketitle
\begin{abstract}
\noindent In the current series of two papers, we  study the long time behavior of nonnegative solutions to the following random Fisher-KPP equation,
$$
u_t =u_{xx}+a(\theta_t\omega)u(1-u),\quad x\in\R,
\eqno(1)
$$
where $\omega\in\Omega$, $(\Omega, \mathcal{F},\mathbb{P})$ is a given probability space, $\theta_t$ is  an ergodic metric dynamical system on $\Omega$,  and $a(\omega)>0$ for every $\omega\in\Omega$.
We also study the long time behavior of nonnegative solutions to  the following nonautonomous Fisher-KPP equation,
$$
u_t=u_{xx}+a_0(t)u(1-u),\quad x\in\R,
\eqno(2)
$$
where $a_0(t)$ is a positive locally H\"older continuous function.
In this first part of the series, we investigate the stability of positive equilibria and the spreading speeds.
Under some proper assumption on $a(\omega)$, we show that the constant solution $u=1$ of (1) is asymptotically stable with respect to strictly
positive perturbations and show  that (1) has a deterministic spreading speed interval $[2\sqrt{\underline a}, 2\sqrt{\bar a}]$,
 where $\underline{a}$ and $\bar a$ are the least and the greatest means of $a(\cdot)$, respectively, and hence the spreading speed interval is linearly determinate.  It is shown that
the solution of (1) with a nonnegative initial function which is  bounded away from $0$ for $x\ll -1$  and is  $0$ for  $x\gg 1$ propagates at  the speed $2\sqrt {\hat a}$, where $\hat a$ is the  mean of $a(\cdot)$.
Under some assumption on $a_0(\cdot)$, we also
show that the constant solution $u=1$ of (2) is asymptotically stably and (2) admits a bounded spreading speed interval.
 It is not assumed that $a(\omega)$ and $a_0(t)$ are bounded above and
below by some positive constants. The results obtained in this part are new and extend the existing results in literature on spreading speeds  of Fisher-KPP equations. In the second part of the series, we will study the existence and stability of transition fronts of (1) and (2).
\end{abstract}

\medskip
\noindent{\bf Key words.} Random Fisher-KPP equation, nonautonomous Fisher-KPP equation, spreading speed,  take-over property,
 ergodic metric dynamical system, subadditive ergodic theorem.

\medskip
\noindent {\bf 2010 Mathematics Subject Classification.}  35B35, 35B40, 35K57, 35Q92, 92C17.

\section{Introduction and statements of the main results}

The current series of two papers is concerned with the long time behavior   of  nonnegative solutions to the following random Fisher-KPP equation,
\begin{equation}\label{Main-eq}
u_t =u_{xx}+a(\theta_t\omega)u(1-u),\quad x\in\R,
\end{equation}
where $\omega\in\Omega$,  $(\Omega, \mathcal{F},\PP,\{\theta_t\}_{t\in \R})$ is an ergodic  metric dynamical system  on $\Omega$,
 $a:\Omega\to (0,\infty)$ is measurable, and $a^\omega(t):=a(\theta_t\omega)$ is locally H\"older continuous for every $\omega\in\Omega$.
It also considers the long time behavior   of nonnegative solutions to  the following nonautonomous  Fisher-KPP equation,
\begin{equation}\label{nonautonomous-eq}
u_t=u_{xx}+a_0(t)u(1-u), x\in\R,
\end{equation}
where $a_0:\R\to (0,\infty)$ is locally H\"older continuous.
 Among others,  \eqref{Main-eq} and \eqref{nonautonomous-eq} are used to model  the  population growth of a species in biology. In such case,  $u(t,x)$ denotes the population density  of the species. Thanks to the biological reason, we are
only interested in nonnegative solutions of \eqref{Main-eq} and \eqref{nonautonomous-eq}.

Observe that \eqref{Main-eq} (resp. \eqref{nonautonomous-eq}) with $a(\omega)\equiv 1$ (resp. with $a_0(t)\equiv 1$) becomes
\begin{equation}
\label{fisher-kpp}
u_t=u_{xx}+u(1-u),\quad x\in\R.
\end{equation}
Equation \eqref{fisher-kpp}  is called in literature  Fisher-KPP   equation
 due to the pioneering works of Fisher
\cite{Fisher} and Kolmogorov, Petrowsky, Piskunov \cite{KPP} on traveling wave solutions and take-over properties of \eqref{fisher-kpp}.
It is clear that the constant solution $u=1$ of \eqref{fisher-kpp} is asymptotically stable with respect to strictly positive perturbations.
Fisher in
\cite{Fisher} found traveling wave solutions $u(t,x)=\phi(x-ct)$ of \eqref{fisher-kpp}
$(\phi(-\infty)=1,\phi(\infty)=0,  \phi(s)>0)$ of all speeds $c\geq 2$ and
showed that there are no such traveling wave solutions of slower
speed. He conjectured that the take-over occurs at the asymptotic
speed $2$. This conjecture was proved in \cite{KPP}  {for some special initial distribution and was proved in \cite{ArWe2} for general initial distributions.
 More precisely, it is proved
in \cite{KPP} that for the  nonnegative solution $u(t,x)$ of \eqref{fisher-kpp} with
$u(0,x)=1$ for $x<0$ and $u(0,x)=0$ for $x>0$, $\lim_{t\to \infty}u(t,ct)$ is $0$ if $c>2$ and $1$ if $c<2$. It is proved
in \cite{ArWe2} that for any
nonnegative solution $u(t,x)$ of (\ref{fisher-kpp}), if at
time $t=0$, $u$ is $1$ near $-\infty$ and $0$ near $\infty$, then
$\lim_{t\to \infty}u(t,ct)$ is $0$ if $c>2$ and $1$ if $c<2$.}
In
literature, $c^*=2$ is   called the {\it
spreading speed} for \eqref{fisher-kpp}.

A huge amount of research has been carried out toward various extensions of
 traveling wave solutions and take-over properties  of \eqref{fisher-kpp} to general time and space independent
as well as time and/or space dependent Fisher-KPP type  equations.
See, for example,  \cite{ArWe1, ArWe2, Bra, Ham, Kam, Sat, Uch}, etc., for
the extension to general time and space independent Fisher-KPP type equations;  see
 \cite{BeHaRoq, BeHaNa1,  BeHaRo,  FrGa,   HuZi1, KoSh, LiYiZh, LiZh, LiZh1,  Nad,  NoRuXi, NoXi,  Wei1, Wei2},  and references therein for
the extension to time and/or space periodic Fisher-KPP
type equations; and see
\cite{BeHaNa1, BeHa07, BeHa12, BeNa,  HePaSt, HuSh,  Mat, Nad1,  NaRo1, NaRo2, NaRo3, NoRoRyZl,  She6, She7, She8,  She9, TaZhZl, Xin1, Zla}, and references therein for
the extension to quite general time and/or space dependent Fisher-KPP
type equations.  The reader is referred to \cite{FaHuWu}, \cite{HeWu}, \cite{ZoWu}, etc.
   for the study of Fisher-KPP reaction diffusion equations with time delay.

 All the existing works on \eqref{Main-eq} (resp. \eqref{nonautonomous-eq})  assumed $\inf_{t\in\R} a^\omega(t)>0$ and $a^\omega(\cdot)\in L^\infty(\R)$ (resp. $\inf_{t\in\R} a_0(t)>0$ and $\sup_{t\in\R} a_0(t)<\infty$).
The objective of the current series of two papers  is to study the long time behavior, in particular, the stability of positive constant solutions, the spreading speeds, and the transition fronts  of \eqref{Main-eq} (resp. \eqref{nonautonomous-eq}) without the assumption $\inf_{t\in\R} a^\omega(t)>0$ and $a^\omega(\cdot)\in L^\infty(\R)$ (resp. without the assumption $\inf_{t\in\R} a_0(t)>0$ and $\sup_{t\in\R} a_0(t)<\infty$). It will also discuss the applications of the results established for \eqref{Main-eq} to Fisher-KPP equations whose growth rate and/or carrying
 capacity are perturbed by real noises.

In this first part of the series, we investigate the stability of positive constant solutions and the spreading speeds of \eqref{Main-eq} and
\eqref{nonautonomous-eq}.
We first consider the stability of positive constant solutions and spreading speeds  of \eqref{Main-eq}
and then consider the stability of positive constant solutions and spreading speeds of \eqref{nonautonomous-eq}.
In the second part of the series, we will study the existence and stability of transition fronts of \eqref{Main-eq} and
\eqref{nonautonomous-eq}.

In the following, we  state the main results of the current  paper.   Let
$$
C_{\rm unif}^b(\R)=\{u\in C(\R)\,|\, u\,\, \text{is bounded and uniformly continuous}\}
$$
with norm $\|u\|_\infty=\sup_{x\in\R}|u(x)|$ for $u\in C_{\rm unif}^b(\R)$.
For given $u_0\in X:=C_{\rm unif}^b(\R)$ and $\omega\in\Omega$, let $u(t,x;u_0,\omega)$ be the solution of \eqref{Main-eq} with $u(0,x;u_0,\omega)=u_0(x)$.
Note that, for $u_0\in X$ with $u_0\ge 0$, $u(t,x;u_0,\omega)$  exists  for $t\in [0,\infty)$ and $u(t,x;u_0,\omega)\ge 0$ for all $t\ge 0$.
Note also that $u\equiv 0$ and $u\equiv 1$ are two constant solutions of \eqref{Main-eq}.
 Let
\begin{equation}\label{a-least-mean}
{ \hat{a}_{\inf}(\omega)}=\liminf_{t-s\to\infty}
\frac{1}{t-s}\int_s^ta(\theta_{\tau}\omega)d\tau:=\lim_{r\to\infty}\inf_{t-s\ge r}\frac{1}{t-s}\int_s^ta(\theta_{\tau}\omega)d\tau
\end{equation}
and
\begin{equation}
\label{a-largest-mean}
{ \hat{a}_{\sup}(\omega)}=\limsup_{t-s\to\infty}\frac{1}{t-s}\int_s^ta(\theta_\tau\omega)d\tau:=\lim_{r\to\infty}\sup_{t-s\ge r}\frac{1}{t-s}\int_s^ta(\theta_{\tau}\omega)d\tau.
\end{equation}

 Observe that
\begin{equation}
\label{a-a-eq1}
\hat{a}_{\inf}(\theta_t\omega)=\hat{a}_{\inf}(\omega)\quad {\rm and}\quad \hat{a}_{\sup}(\theta_t\omega)=\hat{a}_{\sup}(\omega),\,\, \forall\,\, t\in\R,
\end{equation}
and that
$$
\hat{a}_{\inf}(\omega)=\liminf_{t,s\in\Q,t-s\to\infty}\frac{1}{t-s}\int_s^t a(\theta_\tau)d\tau\,\, \,\, {\rm and}\,\, \,\, \hat{a}_{\sup}(\omega)=\liminf_{t,s\in\Q,t-s\to\infty}\frac{1}{t-s}\int_s^t a(\theta_\tau)d\tau.
$$
Then by the countability of the set $\Q$ of rational numbers, both $\hat{a}_{\inf}(\omega)$ and $\hat{a}_{\sup}(\omega)$ are measurable in $\omega$.

Throughout this paper, we assume that  the following standing assumption holds.
\medskip

\noindent {\bf (H1)}
{\it  $0< \hat{a}_{\inf}(\omega)\le \hat{a}_{\sup}(\omega)<\infty$ for a.e. $\omega\in\Omega$.}

\medskip
Note that
  {\bf (H1)} implies that $\hat{a}_{\inf}(\cdot),a(\cdot),\hat{a}_{\sup}(\cdot)\in  L^1 (\Omega, \mathcal{F},\PP)$, {  and that }
 there are
 $\hat a, \underline{a}, \bar a\in\R^+$ and a measurable subset $\Omega_0\subset\Omega$ with $\P(\Omega_0)=1$ such that
 \begin{equation}
 \label{omega-0}
 \begin{cases}
 \theta_t\Omega_0=\Omega_0\quad \forall\,\, t\in\R,\cr
 \lim_{t\to \pm \infty}\frac{1}{t}\int_0^t a(\theta_\tau\omega)d\tau=\hat a\quad \forall\,\, \omega\in\Omega_0,\cr
 { \hat a_{\inf}(\omega)} =\underline{a}\quad \forall\,\,\omega\in\Omega_0,\cr
 { \hat a_{\sup}(\omega)}=\bar a\quad\forall\,\, \omega\in\Omega_0
 \end{cases}
 \end{equation}
 {(see Lemma \ref{time-avg-vs-space-avg-lemma}).}  Throughout this paper,
 $\hat a$ is referred to as the {\it  mean} or {\it average} of $a(\cdot)$, and
$\underline{a}$ and $\overline {a}$ are referred to as the {\it least mean} and the {\it greatest mean} of $a(\cdot)$, respectively.

 Our main result on the stability of the constant solution $u\equiv 1$ of \eqref{Main-eq} reads as follows.

\begin{tm}\label{stability of const equi solu thm}
For every $u_0\in C^{b}_{\rm uinf}(\R)$ with {$\inf_{x\in\R}u_0(x)>0$} and for every $\omega\in\Omega$, we  have that
\begin{equation}\label{stability of const equi solu eq1}
\|u(t,\cdot;u_0,\omega)-1\|_{\infty}\leq M(u_0) e^{-\int_0^ta(\theta_s\omega)ds},
\end{equation}
where $M(u_0):=\max\{1,\|u_0\|_{\infty}\}\cdot\max\Big\{\Big|1-\frac{1}{\min\{1,\inf_{x\in\R} u_0(x)\}} \Big|, \Big|1-\frac{1}{\max\{1,\sup_{x\in\R} u_0(x)\}} \Big|\Big\}$. Hence if $\int_0^{\infty}a(\theta_s\omega)ds=\infty$, then
$$
\lim_{t\to\infty} \|u(t,\cdot;u_0,\omega)-1\|_{\infty}=0.
$$
In particular, if {\bf (H1)} holds, then for every $0<\tilde{a}<\underline{a}$,  every $u_0\in C^{b}_{\rm uinf}(\R)$ with $\inf_{x}u_0(x)>0$, and almost all $\omega\in\Omega$, there is positive constant $M>0$  such that
$$
\|u(t,\cdot;u_0,\theta_{t_0}\omega)-1\|_{\infty}\leq Me^{-\tilde{a}t},\quad \forall \ t\geq 0, \ t_0\in\R.
$$
If $a(\theta_{\cdot}\omega)\in L^{1}(0,\infty)$, then the constant equilibrium solution, $u\equiv 1$, of \eqref{Main-eq} is not asymptotically stable.
 \end{tm}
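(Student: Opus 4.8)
The plan is to reduce the whole statement to the scalar Bernoulli ODE that governs spatially homogeneous solutions of \eqref{Main-eq}, solve that ODE in closed form, and transfer the resulting bounds to arbitrary $u_0$ by the parabolic comparison principle. First I would record the ODE facts: for $v_0>0$, the solution $v(t;v_0,\omega)$ of $v'=a(\theta_t\omega)v(1-v)$, $v(0)=v_0$, exists for all $t\ge0$, stays positive, is monotone toward $1$, and cannot cross the solutions $v\equiv0$ and $v\equiv1$, so $0<v(t;v_0,\omega)\le\max\{1,v_0\}$ for all $t\ge0$. The substitution $w=1/v$ turns the equation into the linear equation $(w-1)'=-a(\theta_t\omega)(w-1)$, whence $\frac1{v(t;v_0,\omega)}-1=(\frac1{v_0}-1)e^{-\int_0^t a(\theta_s\omega)\,ds}$ and therefore $|v(t;v_0,\omega)-1|=v(t;v_0,\omega)\,|\frac1{v_0}-1|\,e^{-\int_0^t a(\theta_s\omega)\,ds}$.

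Since $u\equiv v(t;v_0,\omega)$ solves \eqref{Main-eq}, the comparison principle yields $v(t;\inf_xu_0,\omega)\le u(t,x;u_0,\omega)\le v(t;\sup_xu_0,\omega)$ for all $t\ge0$, $x\in\R$ (here $\inf_xu_0>0$ is used), so $\|u(t,\cdot;u_0,\omega)-1\|_\infty\le\max\{|v(t;\inf_xu_0,\omega)-1|,\,|v(t;\sup_xu_0,\omega)-1|\}$. Plugging in the closed form, bounding $v(t;\,\cdot\,,\omega)\le\max\{1,\|u_0\|_\infty\}$, and checking by a short case distinction (whether $\inf_xu_0$, resp.\ $\sup_xu_0$, is $\le1$ or $\ge1$, using that $x\mapsto1-1/x$ is increasing and $\inf_xu_0\le\sup_xu_0=\|u_0\|_\infty$) that $|\frac1{\inf_xu_0}-1|$ and $|\frac1{\sup_xu_0}-1|$ are both dominated by $\max\{|1-\frac1{\min\{1,\inf_xu_0\}}|,\,|1-\frac1{\max\{1,\sup_xu_0\}}|\}$, I obtain \eqref{stability of const equi solu eq1}; the convergence $\|u(t,\cdot;u_0,\omega)-1\|_\infty\to0$ when $\int_0^\infty a(\theta_s\omega)\,ds=\infty$ is then immediate.

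For the exponential rate under {\bf (H1)}, I would work on the full-measure set $\Omega_0$ of \eqref{omega-0}, where $\hat a_{\inf}(\omega)=\underline a>0$. The map $r\mapsto\inf_{t-s\ge r}\frac1{t-s}\int_s^t a(\theta_\tau\omega)\,d\tau$ is nondecreasing with limit $\underline a$, so for each $\tilde a\in(0,\underline a)$ there is $r_0=r_0(\omega,\tilde a)$ with $\int_s^t a(\theta_\tau\omega)\,d\tau\ge\tilde a(t-s)$ whenever $t-s\ge r_0$, and hence, using $a\ge0$ on shorter intervals, $\int_s^t a(\theta_\tau\omega)\,d\tau\ge\tilde a(t-s)-\tilde a r_0$ for all $s\le t$. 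Applying \eqref{stability of const equi solu eq1} with sample point $\theta_{t_0}\omega$ and using $\theta_s\theta_{t_0}\omega=\theta_{s+t_0}\omega$, the exponent becomes $\int_0^t a(\theta_s\theta_{t_0}\omega)\,ds=\int_{t_0}^{t_0+t}a(\theta_\tau\omega)\,d\tau\ge\tilde a t-\tilde a r_0$, so the estimate holds with $M:=M(u_0)e^{\tilde a r_0}$, uniformly over $t_0\in\R$.

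Finally, to see that $u\equiv1$ is not asymptotically stable when $c:=\int_0^\infty a(\theta_s\omega)\,ds\in(0,\infty)$, I would test with constant data $u_0\equiv\lambda$, $\lambda>0$, $\lambda\ne1$. By uniqueness $u(t,x;u_0,\omega)=v(t;\lambda,\omega)$, and the closed form gives $\frac1{v(t;\lambda,\omega)}\to1+(\frac1\lambda-1)e^{-c}=:\rho_\lambda$, where one checks $\rho_\lambda>0$ and $\rho_\lambda\ne1$; hence $v(t;\lambda,\omega)\to1/\rho_\lambda\ne1$ and $\|u(t,\cdot;u_0,\omega)-1\|_\infty\to|1/\rho_\lambda-1|>0$. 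Since $\|u_0-1\|_\infty=|\lambda-1|$ can be made arbitrarily small while keeping $\lambda\ne1$, no neighborhood of $u\equiv1$ in $X$ consists only of strictly positive data whose solutions converge to $1$, so $u\equiv1$ is not asymptotically stable. I expect no deep obstacle here: the two points that need care are tracking the constant $M(u_0)$ through the case analysis in the comparison step, and the elementary least-mean inequality $\int_s^t a(\theta_\tau\omega)\,d\tau\ge\tilde a(t-s)-C$, which rests on the monotonicity in $r$ of the averaged infimum defining $\hat a_{\inf}$.
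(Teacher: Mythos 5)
Your proposal is correct and follows essentially the same route as the paper's proof: reduce to the spatially homogeneous logistic ODE by comparison with constant data, solve that ODE exactly via the substitution $w=1/v$ (the paper phrases this as the invariance of $(\tfrac1v-1)e^{\int_0^t a}$), and test with a constant $u_0\ne 1$ for the non-stability claim. The only cosmetic difference is that the paper sandwiches $u$ between the solutions starting from $\min\{1,\inf u_0\}$ and $\max\{1,\sup u_0\}$, which avoids your case distinction, and it leaves the {\bf (H1)} exponential-rate step implicit where you spell it out correctly.
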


 To state our main results on the spreading speeds of \eqref{Main-eq},
 let
\begin{equation}
\label{minimal-speeds-eq}
\underline{c}^*=2\sqrt {\underline{a}},\quad  \hat c^*=2\sqrt{\hat{a}},\quad   {\rm and}\quad \overline{c}^*=2\sqrt {\overline{a}}.
\end{equation}
  Let
 $$
 X_c^+=\{u\in C_{\rm unif}^b(\R)\,|\, u\ge 0,\,\,  {\rm supp}(u)\,\,\, \text{is bounded and not empty}\}.
 $$

 \begin{defin}
 \label{spreading-speed-def}
 For given $\omega\in\Omega$, let
 $$
 C_{\sup}(\omega)=\{c\in\R^+\,|\, \limsup_{t\to\infty}\sup_{s\in\R,|x|\ge ct}u(t,x;u_0,\theta_s\omega)=0\quad \forall\,\, u_0\in X_c^+\}
 $$
 and
 $$
 C_{\inf}(\omega)=\{c\in\R^+\,|\, \limsup_{t\to\infty}\sup_{s\in\R,|x|\le ct} |u(t,x;u_0,\theta_s\omega)-1|=0\quad \forall\,\, u_0\in X_c^+\}.
 $$
 Let
 $$
 c_{\sup}^*(\omega)=\inf\{c\,|\, c\in C_{\sup}(\omega)\},\quad c_{\inf}^*(\omega)=\sup\{c\,|\, c\in C_{\inf}(\omega)\}.
 $$
 $[c_{\inf}^*(\omega),c_{\sup}^*(\omega)]$ is called the {\rm spreading speed interval} of \eqref{Main-eq}  with respect to compactly supported initial functions.
 \end{defin}

{
The following theorem   shows that the spreading speed interval of \eqref{Main-eq} with respect to compactly supported initial functions is deterministic and is linearly {determinate}, that is,  $ [c_{\inf}^*(\omega),c_{\sup}^*(\omega)]=[\underline{c}^*,\bar{c}^*]$ for all $\omega\in\Omega_0$.

\begin{tm}\label{spreading-speeds-tm}
Assume that {\bf (H1)} holds. Then the following hold.

\begin{itemize}
\item[(i)] For any $\omega\in\Omega_0$, $c_{\sup}^*(\omega)=\bar{c}^*$.

\item[(ii)]  For any $\omega\in\Omega_0$,  $c_{\inf}^{*}(\omega)=\underline{c}^*$.
\end{itemize}
\end{tm}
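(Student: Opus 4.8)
\textbf{Proof proposal for Theorem \ref{spreading-speeds-tm}.}
Fix $\omega\in\Omega_0$ and write $b^\omega(t):=a(\theta_t\omega)$. I would isolate three facts about the averages of $b^\omega$, all consequences of \textbf{(H1)}, \eqref{omega-0} and Lemma \ref{time-avg-vs-space-avg-lemma}: (a) for each $\epsilon>0$ there is $T_\epsilon$ with $\int_s^{t}b^\omega(\tau)\,d\tau\ge(\underline a-\epsilon)(t-s)$ whenever $t-s\ge T_\epsilon$; (b) $\sup_{s}\tfrac1T\int_s^{s+T}b^\omega\to\bar a$ and $\inf_s\tfrac1T\int_s^{s+T}b^\omega\to\underline a$ as $T\to\infty$ (by sub-/super-additivity of $T\mapsto\sup_s\int_s^{s+T}b^\omega$, resp.\ $T\mapsto\inf_s\int_s^{s+T}b^\omega$); (c) there are $\gamma\in(0,1)$, $T_n\to\infty$, $s_n\in\R$ and $\tilde\epsilon_n\downarrow0$ with $\tfrac1L\int_{s_n}^{s_n+L}b^\omega\ge\bar a-\tilde\epsilon_n$ for every $L\in[\gamma T_n,T_n]$ --- obtained by choosing a window whose full average is close to $\bar a$ and bounding its prefixes via (b), since no long interval has average much above $\bar a$. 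Two further ingredients: a \emph{persistence estimate} --- for each $u_0\in X_c^+$ there are $R^*,T^*,\delta_0>0$, independent of the shift, with $u(t,x;u_0,\theta_s\omega)\ge\delta_0$ for $t\ge T^*$, $|x|\le R^*$, $s\in\R$ (first use $u_t\ge u_{xx}$ to get $u(1,\cdot)\ge\eta>0$ on a fixed interval uniformly in $s$, then dominate from below by $\kappa(t)\cos(\pi x/2R^*)\mathbf 1_{(-R^*,R^*)}$, whose amplitude obeys $\kappa'\ge(b^\omega(\cdot+s)-\pi^2/4(R^*)^2-O(\kappa))\kappa$ and crosses a fixed level in time bounded uniformly in $s$ once $4(R^*)^2>\pi^2/\underline a$, by (a)); and a \emph{spreading lemma} --- if $\int_{\sigma_0}^{\sigma}b^\omega(\tau)\,d\tau\ge\rho(\sigma-\sigma_0)$ for $\sigma-\sigma_0\ge T_\rho$, $u(\sigma_0,\cdot)\ge\delta_0$ on $(-R^*,R^*)$ and $4(R^*)^2>\pi^2/\rho$, then $u(\sigma,\cdot)\ge\delta_1$ on $|x|\le(2\sqrt\rho-\epsilon'')(\sigma-\sigma_0)$ for $\sigma$ large, with $\delta_1>0$ depending only on $\rho,\epsilon'',R^*,\delta_0$ (a standard moving compactly-supported sub-solution whose instantaneous speed is governed by the running average of $b^\omega$).

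\emph{Upper bounds.} For $u_0\in X_c^+$ with $\mathrm{supp}\,u_0\subset[-R,R]$, the comparison principle and the linear equation give $0\le u(t,x;u_0,\theta_s\omega)\le e^{\int_s^{s+t}b^\omega}(G_t*u_0)(x)\le C\,e^{\int_s^{s+t}b^\omega}e^{-(|x|-R)^2/4t}$, $G_t$ the heat kernel. If $c>\bar c^*$, then for $|x|\ge ct$ this is at most $C\exp\!\big(t[\sup_{s}\tfrac1t\int_s^{s+t}b^\omega-\tfrac{c^2}4+o(1)]\big)\to0$ uniformly in $s$ by (b), so $c\in C_{\sup}(\omega)$ and $c^*_{\sup}(\omega)\le\bar c^*$. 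If $c>\underline c^*$, pick $c'\in(\underline c^*,c)$, and by (b) choose $s_n$ and $t_n\to\infty$ with $\tfrac1{t_n}\int_{s_n}^{s_n+t_n}b^\omega\le\underline a+\epsilon$; at $|x|=c't_n\le ct_n$ the above gives $u(t_n,x;u_0,\theta_{s_n}\omega)\le C\exp(t_n[\underline a+\epsilon-\tfrac{c'^2}4+o(1)])\to0$ for $\epsilon$ small, so $\sup_{s,\,|x|\le ct_n}|u(t_n,x;u_0,\theta_s\omega)-1|\not\to0$, whence $c\notin C_{\inf}(\omega)$ and $c^*_{\inf}(\omega)\le\underline c^*$.

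\emph{Lower bounds.} For the inner speed, fix $c<\underline c^*$, $u_0\in X_c^+$, and $\epsilon$ with $c<2\sqrt{\underline a-\epsilon}$; by (a) the spreading lemma applies with $\rho=\underline a-\epsilon$, $\sigma_0=T^*$, uniformly in $s$, giving $u(t,x;u_0,\theta_s\omega)\ge\delta_1>0$ for $|x|\le c_2(t-T^*)$, $t\ge T_1$, all $s$, where $c<c_2<2\sqrt{\underline a-\epsilon}$. To pass from $\delta_1$ to $1$: for $(t,x)$ with $|x|\le ct$ set $L=\beta t$ with $0<\beta<1-c/c_2$; then $u(t-L,\cdot;u_0,\theta_s\omega)\ge\delta_1$ on an interval $I$ of radius $r\ge(c_2(1-\beta)-c)t-c_2T^*\gtrsim t$ about $x$, and $\psi(\sigma,z):=y(\sigma)h(\sigma,z)$ --- with $\dot y=b^\omega(\sigma+s+t-L)\,y(1-y)$, $y(0)=\delta_1$, and $h$ the heat flow from $\mathbf 1_{I}$ --- is a sub-solution since $\psi_\sigma-\psi_{zz}-b^\omega\psi(1-\psi)=b^\omega\psi\,y(h-1)\le0$; evaluating at $\sigma=L$, $z=x$ and using (a) together with $h(L,x)\ge1-Ce^{-r^2/4L}$ gives $u(t,x;u_0,\theta_s\omega)\ge y(L)h(L,x)\ge(1-Ce^{-(\underline a-\epsilon)\beta t})(1-Ce^{-r^2/4\beta t})\to1$, uniformly in $|x|\le ct$ and in $s$; with the universal bound $u\le1+Ce^{-(\underline a-\epsilon)t}$ (ODE comparison from above) this gives $c\in C_{\inf}(\omega)$ and $c^*_{\inf}(\omega)\ge\underline c^*$. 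For the outer speed, fix $c<\bar c^*$ and $\epsilon$ with $c<2\sqrt{\bar a-2\epsilon}$, and take the windows of (c). With the shift $\theta_{s_n}\omega$ and a restart time $\beta_n\to\infty$, $\beta_n=o(T_n)$, facts (b)--(c) give $\int_{s_n+\beta_n}^{s_n+t}b^\omega\ge(\bar a-2\epsilon)(t-\beta_n)$ for all $t\in[\gamma T_n,T_n]$ and $n$ large; applying the persistence estimate at time $\beta_n$ and the spreading lemma with $\rho=\bar a-2\epsilon$, $\sigma_0=\beta_n$, restricted to $\sigma\le T_n$, yields $u(T_n,x;u_0,\theta_{s_n}\omega)\ge\delta_1>0$ for $|x|\le(2\sqrt{\bar a-2\epsilon}-\epsilon'')(T_n-\beta_n)-O(1)$. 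For $\epsilon,\epsilon''$ small this region contains some $x_n$ with $|x_n|\ge cT_n$, so $\sup_{s,\,|x|\ge cT_n}u(T_n,x;u_0,\theta_s\omega)\ge\delta_1$ for all large $n$, hence $\limsup_{t\to\infty}\sup_{s,\,|x|\ge ct}u(t,x;u_0,\theta_s\omega)\ge\delta_1>0$: $c\notin C_{\sup}(\omega)$ and $c^*_{\sup}(\omega)\ge\bar c^*$. Combining the four bounds proves (i) and (ii).

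\emph{Main obstacle.} The crux is the outer lower bound. Because $\bar a$ is a limit superior, the intervals on which the running average of $b^\omega$ is close to $\bar a$ are sparse and located at large, $\omega$-dependent times; fact (c) --- extracting \emph{one} long window on which \emph{every} not-too-short prefix already averages $\approx\bar a$ --- together with the supremum over time shifts built into Definition \ref{spreading-speed-def} is exactly what forces the outer speed to be $\bar c^*$ rather than merely $\hat c^*$. The second substantial ingredient, used in both lower bounds, is the spreading lemma: constructing a moving sub-solution whose propagation speed is dictated by a time-dependent running average rather than by a constant coefficient, with all constants uniform in the shift; the persistence estimate, needed to ignite these sub-solutions uniformly in $s$, is where the positivity of the least mean $\underline a$ enters.
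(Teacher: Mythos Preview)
Your overall strategy is sound and the two \emph{upper} bounds are essentially the paper's: the heat-kernel supersolution you use is a trivial variant of the exponential supersolutions $\|u_0\|_\infty e^{-\mu(\pm x-R-C_\mu(t,s))}$ employed there, and your choice of windows $[s_n,s_n+t_n]$ with average close to $\underline a$ to show $c^*_{\inf}(\omega)\le\underline c^*$ is exactly the paper's argument.

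The two \emph{lower} bounds, however, are handled quite differently. For $c^*_{\inf}(\omega)\ge\underline c^*$ the paper does not build a time-dependent moving sub-solution at all: it invokes Lemma~\ref{average-mean-lemma} (the Nadin--Rossi characterisation $\underline b=\sup_{B}\mathrm{essinf}(b-B')$) to produce a bounded $A\in W^{1,\infty}_{\mathrm{loc}}$ with $\delta a(\theta_t\omega)-A'(t)\ge b$, and then shows that $\sigma e^{A(t+s)}v(t,x;b)$, with $v$ the solution of the \emph{autonomous} KPP equation $v_t=v_{xx}+bv(1-v)$, is a sub-solution of \eqref{Main-eq}. This reduces the inner spreading estimate to Aronson--Weinberger for constant coefficients (Lemma~\ref{spreeding-speed-lem2}) and then upgrades ``$\liminf>0$'' to ``$\to1$'' via the stability Theorem~\ref{stability of const equi solu thm} and the localisation Lemma~\ref{convergence-locally-lm} (packaged as Lemma~\ref{spreeding-speed-lem1}). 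Your persistence estimate, spreading lemma, and ODE\,$\times$\,heat sub-solution achieve the same end by hand; both routes are valid, and your $\psi=y\,h$ sub-solution computation is correct, but the paper's is shorter precisely because Lemma~\ref{average-mean-lemma} converts the integral lower bound on $a(\theta_t\omega)$ into a pointwise one up to a bounded total derivative.

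For $c^*_{\sup}(\omega)\ge\bar c^*$ the paper's route is markedly simpler than yours, and this is where your sketch is most fragile. The paper argues by contradiction: if some $c\in(c^*_{\sup}(\omega),2\sqrt{\delta\bar a})$ existed, then $u(t,x;u_0,\theta_s\omega)<1-\delta'$ for $|x|\ge ct$ and $t$ large, uniformly in $s$; in the moving frame $v(t,x;s)=u(t,x+ct;u_0,\theta_{s-T}\omega)$ this yields $v_t\ge v_{xx}+cv_x+\delta' a(\theta_{s-T+t}\omega)v$, and comparison with the Dirichlet eigenfunction $w^+(x)=e^{-cx/2}\sin(\tfrac{1}{2}\sqrt{4\delta\bar a-c^2}\,x)$ on a \emph{fixed} interval $[0,L]$ gives $v(t,x;s)\ge\alpha\, e^{\int_s^{s-T+t}(\delta' a-\delta\bar a)}w^+(x)$. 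One then needs \emph{only} a single sequence $t_n-s_n\to\infty$ with $\delta'\frac{1}{t_n-s_n}\int_{s_n}^{t_n}a>\delta\bar a$, which follows immediately from the definition of $\bar a$, to obtain the contradiction. By contrast, your approach requires (i) a ``spreading lemma'' for sub-solutions that move at speed $\approx2\sqrt\rho$ under the hypothesis $\int_{\sigma_0}^{\sigma}b^\omega\ge\rho(\sigma-\sigma_0)$ for \emph{all} $\sigma-\sigma_0\ge T_\rho$, and (ii) your fact (c), guaranteeing that all not-too-short prefixes of a well-chosen window already average $\approx\bar a$. Point (i) is not standard when $b^\omega$ is allowed to vanish and to be unbounded---the usual moving-bump construction needs pointwise lower bounds, and to work under integral bounds one is led back to the $A(t)$ device of Lemma~\ref{average-mean-lemma}; point (ii) has a gap near $L=T_n$, since bounding the short suffix $\int_{s_n+L}^{s_n+T_n}b^\omega$ from above requires fact~(b), which only applies to long intervals, while $b^\omega$ may be unbounded. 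Both issues can be repaired (restrict to $L\in[\gamma T_n,(1-\gamma')T_n]$ and supply a genuine proof of the spreading lemma via Lemma~\ref{average-mean-lemma}), but the paper's Dirichlet-eigenfunction argument bypasses them entirely because it needs only the \emph{terminal} integral over one window.
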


The above theorem concerns the spreading speeds of solutions of \eqref{Main-eq} with  compactly supported nonnegative initial functions.
To consider  the spreading speeds of solutions of \eqref{Main-eq} with front-like initial functions, let
 $$
\tilde  X_c^+=\{u\in C_{\rm unif}^b(\R)\,|\, u\ge 0,\,\,  \liminf_{x\to -\infty}u_0(x)>0,\,\, u_0(x)=0\,\, {\rm for}\,\, x\gg 1\}.
 $$

 \begin{defin}
 \label{spreading-speed-def1}
 For given $\omega\in\Omega$, let
 $$
 \tilde C_{\sup}(\omega)=\{c\in\R^+\,|\, \limsup_{t\to\infty}\sup_{s\in\R,x\ge  ct}u(t,x;u_0,\theta_s\omega)=0\quad \forall\,\, u_0\in \tilde X_c^+\}
 $$
 and
 $$
 \tilde C_{\inf}(\omega)=\{c\in\R^+\,|\, \limsup_{t\to\infty}\sup_{s\in\R,x\le ct} |u(t,x;u_0,\theta_s\omega)-1|=0\quad \forall\,\, u_0\in \tilde X_c^+\}.
 $$
 Let
 $$
 \tilde c_{\sup}^*(\omega)=\inf\{c\,|\, c\in \tilde C_{\sup}(\omega)\},\quad \tilde c_{\inf}^*(\omega)=\sup\{c\,|\, c\in \tilde C_{\inf}(\omega)\}.
 $$
 $[\tilde c_{\inf}^*(\omega),\tilde c_{\sup}^*(\omega)]$ is called the {\rm spreading speed interval} of \eqref{Main-eq}  with respect to front-like initial functions.
 \end{defin}

 We have the following theorem on the spreading speeds  of the solutions with front-like initial functions.

\begin{tm}\label{spreading-speeds-tm-0}
Assume that {\bf (H1)} holds. Then the following hold.

\begin{itemize}
\item[(i)] For any $\omega\in\Omega_0$, $\tilde c_{\sup}^*(\omega)=\bar{c}^*$.

\item[(ii)]  For any $\omega\in\Omega_0$,  $\tilde c_{\inf}^{*}(\omega)=\underline{c}^*$.
\end{itemize}
\end{tm}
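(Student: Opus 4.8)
\medskip
\noindent\emph{Proof idea.}
The plan is to treat the two ends of a front-like profile $u_0\in\tilde X_c^+$ separately. Fix $\omega\in\Omega_0$; let $L\in\R$ be such that $u_0(x)=0$ for $x\ge L$, and set $\delta:=\liminf_{x\to-\infty}u_0(x)>0$, so $u_0(x)\ge\delta$ for all $x\le -M$ and some $M>0$. Because the right half of $u_0$ vanishes, the solution stays a subsolution of the linearized equation with data supported in $(-\infty,L]$, and its right edge will be controlled by an explicit family of exponential supersolutions; this gives the upper bounds $\tilde c^*_{\sup}(\omega)\le\bar c^*$ and $\tilde c^*_{\inf}(\omega)\le\underline c^*$. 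Because the left half of $u_0$ dominates a compactly supported bump, comparison from below with Theorem~\ref{spreading-speeds-tm}, together with a spatially homogeneous supersolution from above, gives the reverse bounds $\tilde c^*_{\sup}(\omega)\ge\bar c^*$ and $\tilde c^*_{\inf}(\omega)\ge\underline c^*$. Only the comparison principle, Theorem~\ref{spreading-speeds-tm}, and \eqref{a-least-mean}--\eqref{omega-0} will be used.

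\medskip
\noindent\textbf{Upper bounds on the speeds.}
For $\mu>0$ and $s\in\R$ the function $\phi_{\mu,s}(t,x)=\|u_0\|_\infty\,e^{-\mu(x-L-\mu t-\mu^{-1}\int_s^{s+t}a(\theta_\tau\omega)\,d\tau)}$ solves $\partial_t\phi=\partial_{xx}\phi+a(\theta_{t+s}\omega)\phi$ and satisfies $\phi_{\mu,s}(0,\cdot)\ge u_0$ (since $u_0=0$ for $x\ge L$ and $e^{\mu(L-x)}\ge1$ for $x\le L$); as $a\,u(1-u)\le a\,u$ for $u\ge0$, the nonnegative solution $u(\,\cdot\,;u_0,\theta_s\omega)$ is a subsolution of the same linear equation, and comparison gives $u(t,x;u_0,\theta_s\omega)\le\phi_{\mu,s}(t,x)$ for all $t\ge0$, $x\in\R$. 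Given $c>\bar c^*$, I would pick $\mu$ near $\sqrt{\bar a}$ and $\eps>0$ with $\mu+(\bar a+\eps)/\mu<c$; by \eqref{a-largest-mean}--\eqref{omega-0} there is $r_\eps$ with $\int_s^{s+t}a(\theta_\tau\omega)\,d\tau\le(\bar a+\eps)t$ for all $s\in\R$, $t\ge r_\eps$, whence $\sup_{s\in\R}\sup_{x\ge ct}\phi_{\mu,s}(t,x)\le\|u_0\|_\infty e^{\mu L}e^{-\mu(c-\mu-(\bar a+\eps)/\mu)t}\to0$; so $c\in\tilde C_{\sup}(\omega)$ and $\tilde c^*_{\sup}(\omega)\le\bar c^*$. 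Given $c>\underline c^*$, I would pick $\mu$ near $\sqrt{\underline a}$ and $\eps>0$ with $\mu+(\underline a+\eps)/\mu<c$; by \eqref{a-least-mean}--\eqref{omega-0} there are intervals $[s_n,t_n]$ with $T_n:=t_n-s_n\to\infty$ and $\int_{s_n}^{t_n}a(\theta_\tau\omega)\,d\tau\le(\underline a+\eps)T_n$, and since $\int_0^{T_n}a(\theta_{\tau+s_n}\omega)\,d\tau=\int_{s_n}^{t_n}a(\theta_\tau\omega)\,d\tau$ we get $u(T_n,cT_n;u_0,\theta_{s_n}\omega)\le\phi_{\mu,s_n}(T_n,cT_n)=\|u_0\|_\infty e^{\mu L}e^{-\mu(c-\mu-(\underline a+\eps)/\mu)T_n}\to0$; since the point $x=cT_n$ lies in $\{x\le cT_n\}$, this forces $\limsup_{t\to\infty}\sup_{s\in\R}\sup_{x\le ct}|u(t,x;u_0,\theta_s\omega)-1|\ge1$, i.e.\ $c\notin\tilde C_{\inf}(\omega)$, so $\tilde c^*_{\inf}(\omega)\le\underline c^*$.

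\medskip
\noindent\textbf{Lower bounds on the speeds.}
Fix a nonzero $v_0\in C^b_{\rm unif}(\R)$ with $0\le v_0\le\min\{\delta,1\}$ supported in $[-M-1,-M]$; then $v_0(\,\cdot-p)\le u_0$ for every $p\le0$, and comparison (using that $a(\theta_{t+s}\omega)$ does not depend on $x$) gives $u(t,x;u_0,\theta_s\omega)\ge u(t,x-p;v_0,\theta_s\omega)$. For $\tilde c^*_{\inf}(\omega)\ge\underline c^*$: given $c<\underline c^*$, pick $c''\in(c,\underline c^*)$; since $C_{\inf}(\omega)$ is a down-set with $\sup C_{\inf}(\omega)=\underline c^*$ by Theorem~\ref{spreading-speeds-tm}(ii), we have $c''\in C_{\inf}(\omega)$, so $u(t,y;v_0,\theta_s\omega)\to1$ uniformly in $|y|\le c''t$ and $s\in\R$; the intervals $[p-c''t,p+c''t]$, $p\le 0$, have union $(-\infty,c''t]\supseteq(-\infty,ct]$, so $u(t,x;u_0,\theta_s\omega)\ge1-\eps$ for all $x\le ct$ once $t$ is large, uniformly in $s$. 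The matching bound $u(t,x;u_0,\theta_s\omega)\le1+\eps$ (all $x$, $t$ large, uniformly in $s$) follows by comparison with the spatially homogeneous solution of $\bar u'=a(\theta_{t+s}\omega)\bar u(1-\bar u)$, $\bar u(0)=\|u_0\|_\infty$, which tends to $1$ uniformly in $s$ because $\int_s^{s+t}a(\theta_\tau\omega)\,d\tau\ge(\underline a-\eta)t\to\infty$ uniformly in $s$ by \eqref{a-least-mean}--\eqref{omega-0}. Hence $c\in\tilde C_{\inf}(\omega)$ and $\tilde c^*_{\inf}(\omega)\ge\underline c^*$. For $\tilde c^*_{\sup}(\omega)\ge\bar c^*$: given $c<\bar c^*$, Theorem~\ref{spreading-speeds-tm}(i) — used in the form its proof supplies, namely non-decay for every compactly supported initial bump — gives $\limsup_{t\to\infty}\sup_{s\in\R}u(t,\lceil ct\rceil;v_0,\theta_s\omega)>0$, and then $u\ge u(\,\cdot\,;v_0,\cdot\,)$ yields $\limsup_{t\to\infty}\sup_{s\in\R}\sup_{x\ge ct}u(t,x;u_0,\theta_s\omega)>0$, so $c\notin\tilde C_{\sup}(\omega)$ and $\tilde c^*_{\sup}(\omega)\ge\bar c^*$. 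Combining the four bounds proves (i) and (ii).

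\medskip
\noindent\textbf{Where the difficulty lies.}
The genuinely new ingredient beyond Theorem~\ref{spreading-speeds-tm} is the choice of time windows $[s_n,t_n]$ realizing the \emph{least} mean $\underline a$ in the proof of $\tilde c^*_{\inf}(\omega)\le\underline c^*$ (and, dually, the greatest mean feeding $\tilde c^*_{\sup}(\omega)\ge\bar c^*$): this is exactly where \eqref{a-least-mean}, resp.\ \eqref{a-largest-mean}, enters in an essential way, and it is what forces the front-like spreading interval to be $[\underline c^*,\bar c^*]$ rather than the single point $2\sqrt{\hat a}$. The main technical care is to compare against the \emph{linear} equation (rather than the truncated supersolution $\min\{1,\phi_{\mu,s}\}$), so the exponential bound holds on all of $[0,T_n]$ even when $\|u_0\|_\infty>1$; and, for the interior estimate, to cover the whole unbounded ray $\{x\le ct\}$ by letting the ``near-$1$'' plateau issuing from $\{u_0\ge\delta\}$ expand in both directions at speed $\underline c^*$ (handled by the translated bumps $v_0(\,\cdot-p)$, $p\le0$). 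One should also verify that Theorem~\ref{spreading-speeds-tm}(i) is invoked in its ``for every compactly supported initial function'' form, which is what its proof delivers; everything else is the comparison principle and Theorem~\ref{spreading-speeds-tm}.
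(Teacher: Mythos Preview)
Your proof is correct and follows essentially the same strategy as the paper: exponential supersolutions for $\tilde c_{\sup}^*\le\bar c^*$ and $\tilde c_{\inf}^*\le\underline c^*$, and comparison with compactly supported data via Theorem~\ref{spreading-speeds-tm} for the reverse inequalities. You are in fact more explicit than the paper, which simply asserts ``it is clear that $\tilde c_{\sup}^*(\omega)\ge c_{\sup}^*(\omega)$'' and ``$\tilde c_{\inf}^*(\omega)\ge c_{\inf}^*(\omega)$'' and defers $\tilde c_{\inf}^*\le\underline c^*$ to ``similar arguments as in Theorem~\ref{spreading-speeds-tm}(ii)''; your translated-bump covering of the unbounded ray $(-\infty,ct]$ is precisely what justifies the second of these claims.

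The one small methodological difference is in the step $\tilde c_{\sup}^*\ge\bar c^*$: the paper obtains it from the set inclusion $\tilde C_{\sup}(\omega)\subset C_{\sup}(\omega)$ (every compactly supported datum lies under a front-like one, and the equation is invariant under $x\mapsto -x$ for the left tail), whereas you reopen the proof of Theorem~\ref{spreading-speeds-tm}(i) to extract ``non-decay on $\{x\ge ct\}$ for \emph{every} $v_0\in X_c^+$ and every $c<\bar c^*$''. That extraction is legitimate---the $w^+$-subsolution argument there works for any fixed $u_0$ once one assumes decay on $\{x\ge ct\}$---but the paper's route is a bit cleaner since it uses only the statement of Theorem~\ref{spreading-speeds-tm}. (Your notation $u(t,\lceil ct\rceil;v_0,\theta_s\omega)$ is unclear; what the argument actually delivers, and what you need, is $\limsup_{t\to\infty}\sup_{s\in\R}\sup_{x\ge ct}u(t,x;v_0,\theta_s\omega)>0$.)
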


 We also have the following  theorem on the take-over property of the solutions of \eqref{Main-eq} with front-like initial functions  and   with the initial function
$u_0^*(x)=1$ for $x< 0$ and $u_0^*(x)=0$ for $x>0$.  Note that $u(t,x;u_0^*,\omega)$ exists for all $t>0$ (see \cite[Theorem 1]{KPP}).

\begin{tm}\label{spreading-speeds-tm-1}
\begin{description}
\item[(i)]
 For a.e. $\omega\in\Omega$,
\begin{equation}
\label{average-speed-eq}
\lim_{t\to\infty}\frac{x(t,\omega)}{t}= \hat c^*,
\end{equation}
where $x(t,\omega)$ is such that $u(t,x(t,\omega);u_0^*,\omega)=\frac{1}{2}$.  Moreover,
\begin{equation}\label{asymptoc-tail-eq1}
\lim_{t\to\infty}\sup_{x\geq(\hat c^*+h)t}u(t,x;u_0^*,\omega)=0, \forall\ h>0, \ \text{a.e }\ \omega
\end{equation}
and
\begin{equation}\label{asymptoc-tail-eq2}
\lim_{t\to\infty}\inf_{x\leq (\hat c^*-h)t}u(t,x;u_0^*,\omega)=1, \forall\ h>0, \ \text{a.e\  }\ \omega.
\end{equation}

\item[(ii)]   For any $u_0\in \tilde X_c^+$, it holds that
\begin{equation}\label{asymptoc-tail-eq3}
\lim_{t\to\infty}\sup_{x\geq(\hat c^*+h)t}u(t,x;u_0,\omega)=0, \forall\ h>0, \ \text{a.e }\ \omega
\end{equation}
and
\begin{equation}\label{asymptoc-tail-eq4}
\lim_{t\to\infty}\inf_{x\leq (\hat c^*-h)t}u(t,x;u_0,\omega)=1, \forall\ h>0, \ \text{a.e\  }\ \omega.
\end{equation}
\end{description}

\end{tm}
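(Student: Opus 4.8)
\medskip
\noindent\emph{Proof plan.}
Fix $\omega\in\Omega_0$, so that $A_t:=\tfrac1t\int_0^t a(\theta_s\omega)\,ds\to\hat a$ by \eqref{omega-0}, and write $v(t,\cdot;\phi,\omega)$ for the solution of the \emph{linearized} equation $v_t=v_{xx}+a(\theta_t\omega)v$ with datum $\phi$. For the upper transitions \eqref{asymptoc-tail-eq1} and \eqref{asymptoc-tail-eq3}: since $a(\theta_t\omega)u(1-u)\le a(\theta_t\omega)u$, the solution $u(\cdot,\cdot;u_0,\omega)$ is a sub-solution of the linearized equation, and for each $\mu>0$ the function $\bar u(t,x)=C_\mu e^{-\mu x+\mu^2 t+\int_0^t a(\theta_s\omega)ds}$ solves it; choosing $C_\mu$ large (using that $u_0$ is bounded and $\equiv 0$ for $x\gg1$) so that $\bar u(0,\cdot)\ge u_0$, the comparison principle gives $u(t,x;u_0,\omega)\le C_\mu\exp\!\big(t(\mu^2-\mu(\hat c^*+h)+A_t)\big)$ for $x\ge(\hat c^*+h)t$. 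Taking $\mu=\sqrt{\hat a}$ and using $\hat c^*=2\sqrt{\hat a}$, the exponent equals $t\,(A_t-\hat a-\sqrt{\hat a}\,h)\to-\infty$, which proves \eqref{asymptoc-tail-eq1}, and \eqref{asymptoc-tail-eq3} is identical.

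The lower transitions \eqref{asymptoc-tail-eq2} and \eqref{asymptoc-tail-eq4} I would obtain in two steps, of which the first is the heart of the matter. \emph{Step 1 (bare positivity at speed $\hat c^*-h$).} I would show that for some large $t_1$ there is $\delta>0$ with $u(t_1,x;u_0^*,\omega)\ge\delta$ for all $x\le(\hat c^*-h)t_1$. The idea is to follow the exponential tail of the linearization. On one hand $u\le v(\cdot,\cdot;u_0^*,\omega)$, and $v(t,x;u_0^*,\omega)=e^{\int_0^t a(\theta_s\omega)ds}\cdot\tfrac12{\rm erfc}\big(x/\sqrt{4t}\big)$, whose level set $\{v=\varepsilon\}$ lies, by the asymptotics of ${\rm erfc}$ together with $A_t\to\hat a$, at $x=\hat c^*t+o(t)$. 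On the other hand $u$ cannot fall far below $v$ in the front region: $w:=v-u\ge0$ solves $w_t=w_{xx}+a(\theta_t\omega)w+a(\theta_t\omega)u^2$, and a localized Duhamel/comparison estimate (using $u\le 1$ and $u\le v$ so that $u^2\le v$ only needs to be exploited where $v$ is small) bounds $w$ there by a small multiple of $v$. Together these put a strictly positive value of $u$ on the line $x=(\hat c^*-h)t$ for all large $t$, and since $u(t,\cdot;u_0^*,\omega)$ is non-increasing, $u\ge$ that value for all $x\le(\hat c^*-h)t$. The general front-like case then follows from $u(t,\cdot;\delta_0 g,\omega)\ge\delta_0\,u(t,\cdot;g,\omega)$ (a concavity comparison) applied with a translate $g\le u_0$ of $u_0^*$, which exists because $\liminf_{x\to-\infty}u_0>0$. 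The \textbf{main obstacle} is precisely this step: the exponential-tail heuristic must be made rigorous using only $a(\theta_t\omega)\ge0$ and the ergodic averages $\int_0^t a(\theta_s\omega)ds=\hat a\,t(1+o(1))$, $\int_0^\infty a(\theta_s\omega)ds=\infty$, since no pointwise positive lower bound on $a(\theta_t\omega)$ is available — which is what separates this setting from the classical one; an alternative route would be an Aronson--Weinberger-type iteration bootstrapping the spreading speed from the already available value $\underline c^*$ up to $\hat c^*$.

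\emph{Step 2 (from $\delta$ to $1$).} With $\delta>0$ and $t_1$ as in Step 1, let $m$ solve $\dot m=a(\theta_t\omega)m(1-m)$, $m(t_1)=\delta$, so that $m(t)\uparrow1$ because $\int_0^\infty a(\theta_s\omega)ds=\infty$ under {\bf (H1)}; let $\zeta$ be a convex, non-increasing cut-off equal to $1$ on $(-\infty,0]$ and to $0$ on $[1,\infty)$ with $\zeta''\ge(\hat c^*-2h)|\zeta'|$; and set $r(t)=(\hat c^*-2h)(t-t_1)+r_1$ with $r_1$ chosen so that $\underline u(t_1,\cdot)\le u(t_1,\cdot;u_0,\omega)$. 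Then $\underline u(t,x)=m(t)\zeta\big(x-r(t)\big)$ is a sub-solution: the interaction term $a(\theta_t\omega)m^2\zeta(\zeta-1)$ is $\le0$ because $\zeta\in[0,1]$, and the advection term $-m\dot r\,\zeta'$ is absorbed by $m\zeta''$ by the choice of $\zeta$. Hence $u(t,x;u_0,\omega)\ge m(t)\to1$ for all $x\le r(t)$; since $r(t)/t\to\hat c^*-2h$ and $h>0$ is arbitrary, and since $\limsup_{t\to\infty}\|u(t,\cdot;u_0,\omega)\|_\infty\le1$ (comparison with the decreasing spatially homogeneous solution started from $\max\{1,\|u_0\|_\infty\}$, which tends to $1$ by {\bf (H1)}), this yields \eqref{asymptoc-tail-eq2} and \eqref{asymptoc-tail-eq4}.

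Finally, for \eqref{average-speed-eq}: translation invariance and the comparison principle make $u(t,\cdot;u_0^*,\omega)$ non-increasing in $x$ (as $u_0^*$ is), the strong maximum principle applied to a difference quotient makes it strictly decreasing on $\{0<u<1\}$ for $t>0$, and for each fixed $t>0$ one has $u(t,x;u_0^*,\omega)\to1$ as $x\to-\infty$ and $\to0$ as $x\to+\infty$ (compare $1-u$, which is a sub-solution of the heat equation, with the heat semigroup applied to $\mathbf 1_{(0,\infty)}$, and $u$ with $\bar u$). Hence $x(t,\omega)$ is well defined and unique. Evaluating \eqref{asymptoc-tail-eq1} at $x=(\hat c^*+h)t$ and \eqref{asymptoc-tail-eq2} at $x=(\hat c^*-h)t$ gives $u(t,(\hat c^*+h)t;u_0^*,\omega)<\tfrac12<u(t,(\hat c^*-h)t;u_0^*,\omega)$ for all large $t$, so $(\hat c^*-h)t<x(t,\omega)<(\hat c^*+h)t$ for all large $t$; since $h>0$ is arbitrary, $x(t,\omega)/t\to\hat c^*$, which is \eqref{average-speed-eq}.
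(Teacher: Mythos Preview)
Your upper-tail arguments for \eqref{asymptoc-tail-eq1} and \eqref{asymptoc-tail-eq3} are correct and coincide with the paper's computation for the inequality $c^*\le\hat c^*$. But Step~1 of your lower bound is a genuine gap that you yourself flag: the claim that $w=v-u$ is controlled by a small multiple of $v$ near the front is only heuristic, and the proposed Duhamel bound on the forcing $a(\theta_t\omega)u^2$ is exactly where the absence of any pointwise lower or upper bound on $a(\theta_t\omega)$ bites. The alternative you mention---bootstrapping an Aronson--Weinberger iteration from $\underline c^*$ up to $\hat c^*$---does not work either, since that machinery produces spreading at the \emph{least-mean} speed $\underline c^*$ and cannot by itself detect the ergodic mean $\hat a$ for a fixed trajectory. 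There is also a smaller defect in Step~2: a non-increasing cut-off that is identically $1$ on $(-\infty,0]$ and convex cannot decrease (at $x=0$ the one-sided derivatives would force a \emph{concave} kink, i.e.\ a negative Dirac in $\zeta''$, which destroys the sub-solution property there).

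The paper avoids the Step~1 obstacle entirely by a different and rather elegant route. First it proves, via a zero-number/comparison argument (Lemmas \ref{lm0-5}, \ref{lm1}, \ref{lem2}), that the interface position is \emph{subadditive up to a constant}: there is $\hat M$ with
\[
x(t,\omega)+x(s,\theta_t\omega)\le x(t+s,\omega)+\hat M\qquad\text{for all }t,s>0,\ \text{a.e. }\omega.
\]
Together with the integrability of $x(t,\cdot)$ (Lemma \ref{lm0-3-1}), Kingman's subadditive ergodic theorem applied to $y(t,\omega)=-x(t,\omega)+\hat M$ yields the existence of a deterministic limit $c^*=\lim_{t\to\infty}x(t,\omega)/t$. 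The tail estimates \eqref{asymptoc-tail-eq1}--\eqref{asymptoc-tail-eq2} are then obtained for this $c^*$ from the uniform front shape furnished by Lemma \ref{lm00} and the comparison \eqref{convergence-eq}, and only afterwards is $c^*$ identified with $\hat c^*$: the upper bound $c^*\le\hat c^*$ is your exponential super-solution, and the lower bound $c^*\ge\hat c^*$ is a contradiction argument using the Dirichlet eigenfunction $w^+(x)=e^{-cx/2}\sin\big(\tfrac12\sqrt{4\delta\hat a-c^2}\,x\big)$ on an interval of length $L$, exactly as in the proof of Theorem \ref{spreading-speeds-tm}(i). So the logical order is the reverse of yours: existence of the limit first, then tails, then identification---and the key idea you are missing is the subadditivity of the interface and the invocation of the subadditive ergodic theorem.
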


\medskip

Consider now \eqref{nonautonomous-eq}. { Define $\underline{a}_0$ and $\overline{a}_0$ by
\begin{equation}
\label{b-b-eq}
\underline{a}_0= \liminf_{t-s\to\infty}\frac{1}{t-s}\int_s^t a_0(\tau)d\tau,\quad \overline{a}_0=\limsup_{t-s\to\infty}\frac{1}{t-s}\int_s^t a_0(\tau)d\tau.
\end{equation}
}

 \noindent Let { (H2)} be the following standing assumption.

\medskip

\noindent {\bf (H2)}  {\it  $0< \underline{a}_0\le \overline{a}_0<\infty$.}

\medskip

The assumption {\bf (H2)} is the analogue of {\bf (H1)}. We will give some example for $a_0(\cdot)$ {satisfying {\bf (H2)}} in section 5.
Assume  {\bf (H2)}.  Let
\begin{equation}
\bar c_0^*=2\sqrt{\bar{a}_0}\quad {\rm and}\quad \underline{c}_0^*=2\sqrt {\underline{a}_0}.
\end{equation}
For given $u_0\in C_{\rm unif}^b(\R)$ with $u_0\ge 0$ and $s\in\R$, let $u(t,x;u_0,\sigma_s a_0)$ be the solution of
$$
u_t=u_{xx}+\sigma_s a_0(t) u(1-u),\quad x\in\R,\, t>0,
$$
with $u(0,x;u_0,\sigma_s a_0)=u_0(x)$, where $\sigma_s a_0(t)=a_0(s+t)$.

We have the following theorem on the spreading speeds of \eqref{nonautonomous-eq}.

\begin{tm}\label{nonautonomous-thm3}
Assume  {\bf (H2)}.
Then for every $u_0\in X_c^+$,
\begin{equation}\label{nonauton-spreading-speed-eq1}
\liminf_{t\to\infty}\sup_{s\in\R,|x|\le ct}|u(t,x;u_0,\sigma_s a_0)-1|=0, \quad \forall\ 0<c<\underline{c}^*_0:=2\sqrt{\underline{a}_0}
\end{equation}
and
\begin{equation}\label{nonauton-spreading-speed-eq2}
\limsup_{t\to\infty}\sup_{s\in\R,|x|\ge ct} u(t,x;u_0,\sigma_s a_0)=0, \quad \forall\ c>\bar c^*_0:=2\sqrt{\bar a_0}.
\end{equation}
\end{tm}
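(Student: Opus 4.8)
The plan is to reduce the nonautonomous statement to the random one by realizing $a_0(\cdot)$ inside a suitable ergodic metric dynamical system, and then to invoke the already-established bounds of Theorem~\ref{spreading-speeds-tm}. Concretely, I would let $\Omega$ be the hull of $a_0$, that is, the closure of $\{\sigma_s a_0 : s\in\R\}$ in the locally uniform topology on $C(\R)$, equipped with the translation flow $\theta_t$; any translation-invariant ergodic probability measure $\PP$ on $\Omega$ makes $(\Omega,\mathcal F,\PP,\{\theta_t\})$ an ergodic metric dynamical system, and the evaluation map $a(\omega)=\omega(0)$ has $a^\omega(t)=\omega(t)$. The point is that the least and greatest means $\underline a,\bar a$ of the resulting $a(\cdot)$ on $\Omega_0$ coincide with $\underline{a}_0,\overline{a}_0$: the liminf (resp. limsup) of averages of $a_0$ over long intervals is unchanged under taking the hull, because every element of the hull is a locally uniform limit of translates of $a_0$, and long-interval averages are continuous in that topology. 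Hence $\underline c_0^*=\underline c^*$ and $\bar c_0^*=\bar c^*$ for the associated random equation, and $a_0=a^{\omega_0}$ for the base point $\omega_0$ corresponding to $a_0$ itself.

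The subtlety is that Theorem~\ref{spreading-speeds-tm} gives the conclusions for $\omega\in\Omega_0$, the full-measure invariant set on which the means are the deterministic constants, whereas the base point $\omega_0=a_0$ need not lie in $\Omega_0$. However, the spreading-speed statements we want for \eqref{nonautonomous-eq} are uniform over all shifts $\sigma_s a_0$, $s\in\R$ — that is exactly the $\sup_{s\in\R}$ appearing in \eqref{nonauton-spreading-speed-eq1}--\eqref{nonauton-spreading-speed-eq2}. So the correct target is not a single $\omega$ but the whole orbit $\{\theta_s\omega_0 : s\in\R\}$, and its closure is all of $\Omega$. I would therefore prove a uniform-over-$\omega\in\Omega$ version: for compactly supported $u_0$, $\limsup_{t\to\infty}\sup_{\omega\in\Omega}\sup_{|x|\ge ct}u(t,x;u_0,\omega)=0$ for $c>\bar c^*$, and $\limsup_{t\to\infty}\sup_{\omega\in\Omega}\sup_{|x|\le ct}|u(t,x;u_0,\omega)-1|=0$ for $0<c<\underline c^*$. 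The upper bound is the easy direction: comparison with the spatially homogeneous ODE/linearization shows $u(t,x;u_0,\omega)\le$ (a Gaussian-type bound) $\cdot\,e^{\int_0^t a^\omega(\tau)d\tau}$, and since $\bar a=\sup_{\omega}\limsup$ controls $\frac1t\int_0^t a^\omega$ uniformly for large $t$ over the compact hull $\Omega$, the standard linear estimate gives the $c>\bar c^*$ decay uniformly in $\omega$. The lower (propagation) bound is obtained by building a compactly supported subsolution moving at speed $c<\underline c^*$, again using that $\underline a=\inf_\omega\liminf$ controls averages from below uniformly over $\Omega$ for long times; this is precisely the mechanism already used to prove Theorem~\ref{spreading-speeds-tm}(ii), now run with explicit uniform-in-$\omega$ constants rather than a.e.\ $\omega$.

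The main obstacle is making the "uniform over $\Omega$" averaging estimates precise: one must show that $\inf_{\omega\in\Omega}\inf_{t-s\ge r}\frac{1}{t-s}\int_s^t a^\omega(\tau)d\tau \to \underline{a}_0$ and $\sup_{\omega\in\Omega}\sup_{t-s\ge r}\frac{1}{t-s}\int_s^t a^\omega(\tau)d\tau\to\overline{a}_0$ as $r\to\infty$. This follows from the definition of the hull together with compactness: any sequence of translates realizing the extremal averages has a locally uniform limit in $\Omega$, and local uniform convergence passes to the integral averages over any fixed bounded window, while a diagonal/tail argument handles growing windows. Once this uniform averaging is in hand, the sub- and supersolution constructions are routine adaptations of the arguments for \eqref{Main-eq}, evaluated at $\omega=\sigma_s a_0$ and taking $\sup_{s\in\R}$, which yields \eqref{nonauton-spreading-speed-eq1}--\eqref{nonauton-spreading-speed-eq2}. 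Note only $\liminf$ is asserted in \eqref{nonauton-spreading-speed-eq1} and only $\limsup$ in \eqref{nonauton-spreading-speed-eq2}, so no two-sided control is needed, which is consistent with the fact that $\underline a_0<\bar a_0$ in general.
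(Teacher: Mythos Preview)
Your hull/ergodic reduction is a detour that does not actually deliver the theorem, and it contains a real gap. You rely repeatedly on compactness of the hull $\Omega$ (``uniformly for large $t$ over the compact hull $\Omega$'', ``this follows from the definition of the hull together with compactness''), but under {\bf (H2)} the function $a_0$ is \emph{not} assumed bounded; the paper explicitly allows $\sup_{t}a_0(t)=\infty$ and even gives an example with this property at the end of Section~\ref{Sec nonautonomous}. In that situation the set $\{\sigma_s a_0:s\in\R\}$ is not precompact in $C_{\rm loc}(\R)$, so there is no compact hull to appeal to, and hence no Krylov--Bogolyubov measure, no $\Omega_0$, and no uniform-over-$\Omega$ averaging via compactness. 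Moreover, even when the hull is compact, invoking Theorem~\ref{spreading-speeds-tm} gives information only on a full-measure set that need not meet the orbit of $a_0$; you recognize this and propose to reprove everything uniformly in $\omega$, at which point the dynamical-systems packaging has bought you nothing.

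The paper's proof is exactly the ``routine adaptation'' you defer to in your last paragraph, done directly with no hull at all. For \eqref{nonauton-spreading-speed-eq2} one uses the explicit supersolutions $\|u_0\|_\infty e^{-\mu(\pm x-R-\int_s^{s+t}(\mu^2+a_0(\tau))/\mu\,d\tau)}$ and the definition of $\overline a_0$. For \eqref{nonauton-spreading-speed-eq1} one fixes $c<2\sqrt b<2\sqrt{\delta\underline a_0}$, uses Lemma~\ref{average-mean-lemma} to produce a bounded $A\in W^{1,\infty}_{\rm loc}$ with $b\le \delta a_0(t)-A'(t)$, and checks that $\sigma e^{A(t+s)}v(t,x;b)$ (with $v$ the solution of the autonomous equation $v_t=v_{xx}+bv(1-v)$) is a subsolution uniformly in $s$; then Lemma~\ref{spreeding-speed-lem2} and the argument of Lemma~\ref{spreeding-speed-lem1} upgrade the positive lower bound to convergence to $1$. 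All of this uses only the definitions of $\underline a_0,\overline a_0$ as uniform long-time averages over all shifts, which is precisely the information the $\sup_{s\in\R}$ in the statement demands; no compactness, no invariant measure, no $\Omega_0$ is needed. Drop the hull and write the sub/supersolution estimates directly.
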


\medskip

We conclude the introduction with the following  four remarks.

 First, the results in Theorems \ref{spreading-speeds-tm}-\ref{nonautonomous-thm3} are new. If $a_0(t)$ is periodic with period $T$, then $\underline{a}_0=\bar a_0=\hat a_0:=\frac{1}{T}\int_0^T a_0(\tau)d\tau$ and hence
 $\underline{c}_0^*=\bar c_0^*=2\sqrt{\hat a_0}$. More generally, {if $a_0(t)$ in globally H\"older continuous and
  is uniquely ergodic in the sense that
 the space $H(a_0)$ is compact and the flow $(H(a_0),\sigma_t)$ is uniquely ergodic,} where $H(a_0)={\rm cl}\{\sigma_s a_0\,|\, s\in\R\}$ with open compact topology and
 $\sigma_s a_0(\cdot)=a_0(\cdot+s)$,
 then $\underline{a}_0=\bar a_0=\hat a_0:=\lim_{T\to\infty}\frac{1}{T}\int_0^T a_0(\tau)d\tau$ and hence   $\underline{c}_0^*=\bar c_0^*=2\sqrt{\hat a_0}$. Therefore the existing results on spreading speeds of \eqref{nonautonomous-eq} in the time periodic and time almost
 periodic cases are recovered. The current paper provides a new and simpler proof in these special cases.

\smallskip

 Second, by Theorem \ref{spreading-speeds-tm} and \ref{spreading-speeds-tm-0},
 $$
 [c_{\rm inf}^*(\omega),c_{\rm sup}^*(\omega)]=[\tilde c_{\rm inf}^*(\omega),\tilde c_{\rm sup}^*(\omega)]=[\underline{c}^*,\bar c^*]
 $$
 for any $\omega\in\Omega_0$.  Hence $[\underline{c}^*,\bar c^*]$ is called the {\it spreading speed interval} of \eqref{Main-eq},
 which is deterministic and is determined by the linearized equation of \eqref{Main-eq} at $u\equiv 0$. Theorem \ref{spreading-speeds-tm-1} is an extension of the take-over property  proved in  \cite{ArWe2}  and  \cite{KPP} for \eqref{fisher-kpp}.   In order to prove Theorem \ref{spreading-speeds-tm-1} we are first led to prove that $x(t,\omega) $ is a subadditive process (see Lemma \ref{lem2} for more detail). The fact that $x(t,\omega)$ is a subadditive process is    interesting. Its proof  relies on comparison between various translation of the solution and on a zero-number argument enabling to bound the width of the interface. {It is our belief that} this result will open the way to other applications in the future.

\smallskip

 Third,  the results established for \eqref{Main-eq} and \eqref{nonautonomous-eq} can be applied to the following general random Fisher-KPP equation,
\begin{equation}
\label{general-random-eq}
u_t=u_{xx}+u(r(\theta_t\omega)-\beta(\theta_t\omega) u),
\end{equation}
where $r:\omega\to (-\infty,\infty)$ and $\beta:\Omega\to (0,\infty)$ are measurable with locally H\"older continuous sample paths $r^\omega(t):=r(\theta_t\omega)$ and
$\beta^\omega(t):=\beta(\theta_t\omega)$,
and to the following nonautonomous Fisher-KPP equation,
\begin{equation}
\label{general-nonautonomous-eq}
u_t=u_{xx}+u(r_0(t)-\beta_0(t) u),
\end{equation}
where $r_0:\R\to \R$ and $\beta_0:\R\to (0,\infty)$ are locally H\"older continuous. Note that \eqref{general-random-eq} models the population growth of a species with random
perturbations on its growth rate and carrying capacity, and \eqref{general-nonautonomous-eq} models the population growth of a species
with deterministic time dependent perturbations on its growth rate and carrying capacity.

In fact, under some assumptions on $r(\omega)$ and $\beta(\omega)$, it can be proved that
$$
u(t;\omega):=Y(\theta_t\omega)=\frac{1}{\int_{-\infty}^0 e^{-\int_s ^0 r(\theta_{\tau+t}\omega)d\tau}\beta(\theta_{s+t}\omega)ds}
$$
is an random equilibrium of \eqref{general-random-eq}. Let $\tilde u=\frac{u}{Y(\theta_t\omega)}$ and drop the tilde, \eqref{general-random-eq} becomes
\eqref{Main-eq} with $a(\theta_t\omega)=\beta(\theta_t\omega)\cdot Y(\theta_t\omega)$, and then the results established for \eqref{Main-eq}
can be applied to \eqref{general-random-eq}.
For example, consider the following random Fisher-KPP equation,
\begin{equation}
\label{real-noise-eq}
u_t=u_{xx}+u(1+ \xi(\theta_t\omega) -u),\quad x\in\R,
\end{equation}
 where $\omega\in\Omega$,  $(\Omega, \mathcal{F},\PP,\{\theta_t\}_{t\in \R})$ is an ergodic  metric dynamical system,  $\xi:\Omega\to \R$ is measurable,  and $\xi_t(\omega):=\xi(\theta_t\omega)$ is locally H\"older continuous
($\xi_t$ denotes a real noise or a colored noise). { Let $\hat\xi_{\inf}(\omega)$ and $\hat\xi_{\sup}(\omega)$ be defined as in
\eqref{a-least-mean} and \eqref{a-largest-mean} with $a(\cdot)$ being replaced by $\xi(\cdot)$, respectively}.
Assume that $\xi_t(\cdot)$ satisfies the following {\bf (H3)}.

\medskip

\noindent  {\bf (H3)}  {\it $\xi:\Omega\to\R$ is measurable;  $\int_\Omega |\xi(\omega)|d\PP(\omega)<\infty$ and
$\int_\Omega \xi(\omega)d\PP(\omega)=0$;   $-1<{ \hat {\xi}_{\inf}(\omega)}\le {\hat {\xi}_{\sup}(\omega)}<\infty$ and { ${ \inf_{t\in\R}\xi(\theta_{t}\omega)}>-\infty$} for a.e. $\omega\in\Omega$;   and
 $\xi^\omega(t):=\xi(\theta_t\omega)$ is locally H\"older continuous.  }

 \medskip

Assume {\bf (H3)}. By the arguments of Lemma \ref{time-avg-vs-space-avg-lemma}, there are $\underline{\xi},\overline{\xi}\in\R$ such that
$\hat\xi_{\inf }(\omega)=\underline\xi$ and $\hat\xi_{\sup}(\omega)=\overline\xi$ for a.e. $\omega\in\Omega$.
 It  can be proved that
 \begin{equation}
 \label{random-equilibrium-1}
 Y(\omega)=\frac{1}{\int_{-\infty}^0 e^{ s+\int_0^s \xi(\theta_\tau\omega)d\tau}ds}
 \end{equation}
  is a spatially homogeneous asymptotically stable
 random equilibrium of \eqref{real-noise-eq} (see Theorem \ref{real-noise-tm1} and Corollary \ref{stability-random-equilibrium-cor}). It can also be proved that for any $u_0\in X_c^+$,
 \begin{equation*}
\limsup_{t\to\infty}\sup_{s\in\R,|x|\le ct}|\frac{u(t,x;u_0,\theta_s\omega)}{Y(\theta_{t+s}\omega)}-1|=0, \quad \forall\ 0<c<2\sqrt{1+\underline{\xi}}
\end{equation*}
and
\begin{equation*}
\limsup_{t\to\infty}\sup_{s\in\R,|x|\ge ct} \frac{u(t,x;u_0,\theta_s\omega)}{Y(\theta_{t+s}\omega)}=0, \quad \forall\ c>2\sqrt {1+\bar \xi},
\end{equation*}
for a.e. $\omega\in\Omega$. where $u(t,x;u_0,\theta_s\omega)$ is the solution of \eqref{real-noise-eq}
with $\omega$ being replaced by $\theta_s\omega$ and $u(0,x;u_0,\theta_s\omega)=u_0(x)$ (see Corollary \ref{spreading-cor}).

\medskip

 Fourth, it is interesting to study the spreading properties of \eqref{Main-eq} with {\bf (H1)} being replaced by the following weaker assumption,
\medskip

\noindent {\bf (H1$)'$} $0<\hat a:=\int_\Omega a(\omega)d\PP(\omega)<\infty$.

\medskip

\noindent We plan to study this general case somewhere else, which would have applications to the study of the spreading properties of
the following stochastic Fisher-KPP equation,
\begin{equation}
\label{white-noise-eq}
d u=(u_{xx}+u(1-u))dt+\sigma u dW_t,\quad x\in\R,
\end{equation}
where $W_t$ denotes the standard two-sided Brownian motion ($dW_t$ is then the white noise).
In fact, let $ \Omega:=\{\omega\in C(\R,\R)\ |\  \omega(0)=0\ \}$ equipped with the open compact topology, $\mathcal{F}$ be the Borel $\sigma-$field and $\mathbb{P}$ be the Wiener measure on $(\Omega, \mathcal{F})$. Let $W_t$ be the one dimensional Brownian motion on the Wiener space $(\Omega,\mathcal{F},\mathbb{P})$ defined by $W_t(\omega)=\omega(t)$. Let $\theta_t\omega$ be the canonical Brownian shift: $(\theta_t\omega)(\cdot)=\omega(t+\cdot)-\omega(t)$ on $\Omega$. It is easy to see that $W_t(\theta_s\omega)=W_{t+s}(\omega)-W_s(\omega)$.
  If $\frac{\sigma^2}{2}<1$,
then it can be proved that
\begin{equation}
\label{random-equilibrium-eq2}
Y(\omega)=\frac{1}{\int_{-\infty}^0 e^{(1-\frac{\sigma^2}{2})s+\sigma W_s (\omega)ds}}
\end{equation}
 is a spatially homogeneous stationary solution process of \eqref{white-noise-eq}. Let $\tilde u=\frac{u}{Y(\theta_t\omega)}$ and drop the tilde, \eqref{white-noise-eq} becomes
\eqref{Main-eq} with $a(\theta_t\omega)= Y(\theta_t\omega)$. The reader is referred to \cite{HuLi1, HuLi2, HuLiWa, JiJiSh, OkVaZh1, OkVaZh2}
for some study on the front propagation dynamics of \eqref{random-equilibrium-eq2}.  Note that Theorem \ref{spreading-speeds-tm-1} (i)
is an analogue of \cite[Theorem 1]{HuLiWa}.

 It is important to note that the authors of {the work \cite{BeNa} studied } the asymptotic spreading speeds for space-time heterogeneous equations of the form
\begin{equation}\label{BeNa}
u_t=\sum_{i,j=1}^Na_{i,j}(t,x)u_{x_ix_j} +\sum_{i=1}^Nq_i(t,x)u_{x_i} +f(t,x,u),\quad x\in\R^N,
\end{equation}
where $f(t,x,0)=f(t,x,1)=0$. We note that Theorem \ref{nonautonomous-thm3} improves \cite[Proposition 3.9]{BeNa}, since  $\inf_{t\in\R}a_0(t)=0$ and $\sup_{t\in\R}a_0(t)$ are allowed here. Moreover, the techniques developed in the current work are different from the ones in \cite{BeNa}. Certainly, it should be mentioned that \eqref{BeNa} is more general than \eqref{nonautonomous-eq}.

The rest of the paper is organized as follows. In section 2, we  present some preliminary lemmas, which will be used in the proofs of main
results of the current paper  in later sections.
   In section 3, we establish some results about the stability of the positive constant equilibrium solution $u\equiv 1$ of \eqref{Main-eq} (resp. \eqref{nonautonomous-eq}) {and prove Theorem \ref{stability of const equi solu thm}}.
In section 4, we study the spreading properties of solutions of \eqref{Main-eq} with nonnegative and compactly supported initial functions
or front like initial functions and prove Theorems \ref{spreading-speeds-tm} and \ref{spreading-speeds-tm-0}.
We investigate in section 4 the take-over property of \eqref{Main-eq} and prove Theorem \ref{spreading-speeds-tm-1}.
We consider spreading properties of \eqref{nonautonomous-eq} in section 5.

\section{Preliminary lemmas}

In this section, we present some preliminary lemmas to be used in later sections of this paper as well as in the second part of the series.

\begin{lem}
\label{time-avg-vs-space-avg-lemma}
  {\bf (H1)} implies that $\hat{a}_{\inf}(\cdot),a(\cdot), { \hat a_{\sup}(\cdot)}\in  L^1 (\Omega, \mathcal{F},\PP)$ and that
 {there are
 $\underline{a}, \bar a, \hat a\in\R^+$ and a measurable subset $\Omega_0\subset\Omega$ with $\P(\Omega_0)=1$ such that
 $\theta_t\Omega_0=\Omega_0$ for all $t\in\R$,  $\hat a_{\inf}(\omega) =\underline{a}$ and
 $\hat a_{\sup}(\omega)=\bar a$ for all $\omega\in\Omega_0$, and
 $\lim_{t\to \pm \infty}\frac{1}{t}\int_0^t a(\theta_\tau\omega)d\tau=\hat a$ for all $\omega\in\Omega_0$.}
\end{lem}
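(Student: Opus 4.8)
The plan is to derive everything from ergodicity of $\{\theta_t\}$ together with the continuous-time (flow) version of Birkhoff's ergodic theorem, the one genuine difficulty being the integrability of $a$. First I would record two elementary comparisons between one-sided and two-sided averages: taking $s=0$ in \eqref{a-least-mean} and \eqref{a-largest-mean} gives, for every $\omega$,
$$
\liminf_{t\to\infty}\frac1t\int_0^t a(\theta_\tau\omega)\,d\tau\ \ge\ \hat a_{\inf}(\omega),\qquad
\limsup_{t\to\infty}\frac1t\int_0^t a(\theta_\tau\omega)\,d\tau\ \le\ \hat a_{\sup}(\omega),
$$
since $\{(0,t):t\ge r\}$ is a subfamily of $\{(s,t):t-s\ge r\}$. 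Next, by \eqref{a-a-eq1} the measurable functions $\hat a_{\inf}$ and $\hat a_{\sup}$ are $\theta_t$-invariant for every $t\in\R$, hence a.e.\ equal to constants $\underline a,\bar a\in[0,\infty]$ by ergodicity; assumption \textbf{(H1)} then forces $0<\underline a\le\bar a<\infty$.

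To obtain $a\in L^1(\Omega,\mathcal F,\PP)$ — the main obstacle, since a priori $a$ is controlled only through its long-time averages — I would truncate. For $M>0$ put $a_M:=\min\{a,M\}\in L^\infty(\Omega)\subset L^1(\Omega)$. The flow Birkhoff theorem, applied to the ergodic system $\{\theta_t\}$, gives $\frac1t\int_0^t a_M(\theta_\tau\omega)\,d\tau\to\int_\Omega a_M\,d\PP$ for a.e.\ $\omega$ as $t\to\infty$. Since $a_M\le a$, the second comparison above yields $\int_\Omega a_M\,d\PP\le\hat a_{\sup}(\omega)=\bar a$ for a.e.\ $\omega$, hence $\int_\Omega a_M\,d\PP\le\bar a$ for every $M$; letting $M\to\infty$ and using monotone convergence gives $\int_\Omega a\,d\PP\le\bar a<\infty$. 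Thus $a\in L^1$, and since $\hat a_{\inf}=\underline a$ and $\hat a_{\sup}=\bar a$ are a.e.\ equal to finite constants, they also lie in $L^1$.

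With $a\in L^1$ in hand, I would apply the flow Birkhoff theorem to $\{\theta_t\}$ and to the (also ergodic) reversed flow $\{\theta_{-t}\}$ to get, for a.e.\ $\omega$,
$$
\lim_{t\to+\infty}\frac1t\int_0^t a(\theta_\tau\omega)\,d\tau=\lim_{t\to-\infty}\frac1t\int_0^t a(\theta_\tau\omega)\,d\tau=\int_\Omega a\,d\PP=:\hat a ,
$$
and the first comparison above shows $\underline a\le\hat a\le\bar a$, so $\hat a\in\R^+$. Finally I would assemble $\Omega_0$. Let $A_1$ be the full-measure set on which the displayed two-sided limit equals $\hat a$, and let $A_2=\{\hat a_{\inf}=\underline a\}$, $A_3=\{\hat a_{\sup}=\bar a\}$, each of full measure. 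Each $A_i$ is \emph{exactly} $\theta_t$-invariant: for $A_2,A_3$ by \eqref{a-a-eq1}; for $A_1$ because, for $s\in\R$,
$$
\frac1t\int_0^t a(\theta_{\tau+s}\omega)\,d\tau=\frac{t+s}{t}\cdot\frac1{t+s}\int_0^{t+s}a(\theta_\tau\omega)\,d\tau-\frac1t\int_0^s a(\theta_\tau\omega)\,d\tau\ \longrightarrow\ \hat a
$$
as $t\to\pm\infty$, the last integral being finite since $a^\omega$ is locally H\"older, hence locally bounded. Then $\Omega_0:=A_1\cap A_2\cap A_3$ is measurable, has $\PP(\Omega_0)=1$, satisfies $\theta_t\Omega_0=\Omega_0$ for all $t$, and on it $\hat a_{\inf}\equiv\underline a$, $\hat a_{\sup}\equiv\bar a$, and $\lim_{t\to\pm\infty}\frac1t\int_0^t a(\theta_\tau\omega)\,d\tau=\hat a$, which is the claim. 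I expect the integrability step to be the crux — one cannot bound $\int_\Omega a\,d\PP$ directly, and it is the truncation-plus-ergodic-theorem device, made available by the a.e.\ constancy of $\hat a_{\sup}$ obtained from ergodicity, that makes it work; the remaining steps are standard applications of Birkhoff's theorem and routine verification of flow-invariance.
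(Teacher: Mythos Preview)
Your proof is correct and follows essentially the same route as the paper: both use ergodicity to deduce that the invariant functions $\hat a_{\inf},\hat a_{\sup}$ are a.e.\ constant (the paper does this via the level sets $\Omega_n=\{\hat a_{\sup}\le n\}$, you invoke the equivalent fact directly), both use the truncation $a_M=\min\{a,M\}$ together with Birkhoff and monotone convergence to get $a\in L^1$, and both finish by applying Birkhoff to $a$ and intersecting full-measure invariant sets. Your treatment is in fact slightly more careful than the paper's in that you explicitly verify the exact $\theta_t$-invariance of the Birkhoff set $A_1$, which the paper asserts without justification.
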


\begin{proof} First,  let
 $$
 \Omega_n=\{\omega\in\Omega\, |\, \hat a_{\sup}(\omega)\le n\}\quad \forall\, n\in\N,
 $$
  and
 $$
 \Omega_\infty=\{\omega\in\Omega\,|\, \hat a_{\sup}(\omega)=\infty\}.
  $$
Then ${\Omega_\infty\cup} \cup_{n=1}^\infty \Omega_n=\Omega$. {By {\bf (H1)}},  there is $\bar n\in\N$ such that
 $\P(\Omega_{\bar n})>0$. By \eqref{a-a-eq1},  $\theta_t\Omega_n=\Omega_n$ for all $t\in\R$ and $n\in\N$.  Then by the ergodicity of the metric dynamical system $(\Omega, \mathcal{F},\PP,\{\theta_t\}_{t\in \R})$,
 we have $\PP(\Omega_{\bar n})=1$.  This implies that $\hat{a}_{\sup}(\cdot)\in  L^1 (\Omega, \mathcal{F},\PP)$, and then $\hat{a}_{\inf}(\cdot)\in  L^1 (\Omega, \mathcal{F},\PP)$. Moreover,  by \eqref{a-a-eq1},
 \begin{equation*}
\hat{a}_{\inf}(\omega)=\lim_{t\to\infty}\frac{1}{t}\int_0^t\hat{a}_{\inf}(\theta_\tau\omega)d\tau=\int_\Omega\hat{a}_{\inf}(\omega) d\PP(\omega)\quad {\rm for}\quad a.e. \,\, \omega\in\Omega,
\end{equation*}
and
\begin{equation*}
\hat{a}_{\sup}(\omega)=\lim_{t\to\infty}\frac{1}{t}\int_0^t\hat{a}_{\sup}(\theta_\tau\omega)d\tau=\int_\Omega\hat{a}_{\sup}(\omega) d\PP(\omega)\quad {\rm for}\quad a.e. \,\, \omega\in\Omega.
\end{equation*}
It then follows that there are $\underline a,\bar a\in\R$ and  a measurable subset $\Omega_1\subset\Omega$ with $\P(\Omega_1)=1$ such that
$\theta_t\Omega_1=\Omega_1$ for all $t\in\R$, and $\hat a_{\inf}(\omega)=\underline a$ and
$\hat a_{\sup}(\omega)=\bar a$ for all $\omega\in\Omega_1$.

Next, for given $n\in\N$, let
$$
a_n(\omega)=\min\{a(\omega), n\}.
$$
Then $a_n(\cdot)\in  L^1 (\Omega, \mathcal{F},\PP)$, $0<a_1(\omega)\le a_2(\omega)\le \cdots$, and $\lim_{n\to\infty} a_n(\omega)=a(\omega)$.
By the ergodicity of the metric dynamical system $(\Omega, \mathcal{F},\PP,\{\theta_t\}_{t\in \R})$, we have that for a.e. $\omega\in\Omega$,
$$
\int_\Omega a_n(\omega)d\PP(\Omega)=\lim_{t\to\infty}\frac{1}{t}\int_0^t a_n(\theta_\tau\omega)d\tau\le  \hat{a}_{\sup}(\omega)=\int_\Omega \hat{a}_{\sup}(\omega)d\PP(\omega).
$$
This together with { the} Monotone Convergence Theorem implies that
$$
\int_\Omega a(\omega)d\PP(\omega)=\lim_{n\to\infty}\int_\Omega a_n(\omega)d\PP(\omega)\le \int_{\Omega}\hat a_{\sup}(\omega)d\PP(\omega).
$$
Therefore, $a(\cdot)\in L^1 (\Omega, \mathcal{F},\PP)$, and moreover,
 by the ergodicity of the metric dynamical system $(\Omega, \mathcal{F},\PP,\{\theta_t\}_{t\in \R})$, there are
 { $\hat a\in\R$ and a measurable subset $\Omega_2\subset \Omega$ with
 $\P(\Omega_2)=1$ such that $\theta_t \Omega_2=\Omega_2$ for all $t\in\R$}, and
\begin{equation*}
\hat a=\lim_{t\to\infty}\frac{1}{t}\int_0^t a(\theta_\tau\omega)d\tau{=\lim_{t\to\infty}\frac{1}{t}\int_{-t}^0 a(\theta_\tau\omega)d\tau}= \int_\Omega a(\omega) d\P(\omega)\quad {\rm for}\quad a.e. \,\,\omega\in\Omega.
\end{equation*}
The lemma thus follows {with $\Omega_0=\Omega_1\cap\Omega_2$.}
\end{proof}

\begin{lem}\label{average-mean-lemma}   Suppose that $b\in C(\R, (0,\infty))$  and that $0<\underline{\it b}\leq \overline{\it b}<\infty$,
where
\begin{equation*}
\underline{b}= \liminf_{t-s\to\infty}\frac{1}{t-s}\int_s^t b(\tau)d\tau,\quad \overline{b}=\limsup_{t-s\to\infty}\frac{1}{t-s}\int_s^t b(\tau)d\tau.
\end{equation*}
Then
\begin{equation}
\label{b-B-eq}
\underline{\it b}=\sup_{B\in W^{1,\infty}_{\rm loc}(\R)\cap L^{\infty}(\R)}{\rm essinf}_{\tau\in\R}(b(\tau)-{\it B'}(\tau)).
\end{equation}
\end{lem}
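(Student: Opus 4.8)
The plan is to write $\mu$ for the supremum on the right-hand side of \eqref{b-B-eq} and to establish the two inequalities $\mu\le\underline b$ and $\mu\ge\underline b$ separately.

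\emph{The easy inequality $\mu\le\underline b$.} I would fix any admissible $B\in W^{1,\infty}_{\rm loc}(\R)\cap L^\infty(\R)$ and put $m:={\rm essinf}_{\tau\in\R}\bigl(b(\tau)-B'(\tau)\bigr)$, which may be assumed finite. Then $B'(\tau)\le b(\tau)-m$ for a.e.\ $\tau$, and since $B$ is locally absolutely continuous, integrating over $[s,t]$ yields $B(t)-B(s)\le\int_s^t b(\tau)\,d\tau-m(t-s)$, hence
\[
m\le\frac1{t-s}\int_s^t b(\tau)\,d\tau+\frac{2\|B\|_{L^\infty}}{t-s}.
\]
Letting $t-s\to\infty$ and using that the last term tends to $0$ gives $m\le\underline b$; taking the supremum over $B$ gives $\mu\le\underline b$.

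\emph{The reverse inequality $\mu\ge\underline b$.} Here the plan is to construct, for each $\ep\in(0,\underline b)$, an admissible $B_\ep$ with ${\rm essinf}_\tau\bigl(b(\tau)-B_\ep'(\tau)\bigr)\ge\underline b-\ep$. Because $\underline b=\lim_{r\to\infty}\inf_{t-s\ge r}\frac1{t-s}\int_s^t b(\tau)\,d\tau$ and the inner infimum is nondecreasing in $r$, one may fix $R>0$ with $\int_s^t b(\tau)\,d\tau\ge(\underline b-\ep)(t-s)$ \emph{for all} $s$ whenever $t-s\ge R$; applying this with $s=0$ also shows that $\Phi(\tau):=\int_0^\tau\bigl(b(\sigma)-\underline b+\ep\bigr)\,d\sigma$ satisfies $\Phi(\tau)\to+\infty$ as $\tau\to+\infty$, so $m(\tau):=\inf_{t\ge\tau}\Phi(t)$ is finite and attained for every $\tau$. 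Set $B_\ep:=\Phi-m$. One then checks: (i) $0\le B_\ep(\tau)=\sup_{t\ge\tau}\bigl(\Phi(\tau)-\Phi(t)\bigr)\le\underline b\,R$ for all $\tau$, by splitting the supremum into $t\le\tau+R$ (where $\Phi(\tau)-\Phi(t)\le(\underline b-\ep)(t-\tau)$ since $b\ge0$) and $t\ge\tau+R$ (where $\Phi(\tau)-\Phi(t)\le0$ by the choice of $R$), so $B_\ep\in L^\infty(\R)$; (ii) $m$ is nondecreasing with $0\le m(\tau_2)-m(\tau_1)\le\int_{\tau_1}^{\tau_2}b(\sigma)\,d\sigma$ for $\tau_1<\tau_2$ (by looking at where the infimum defining $m(\tau_1)$ is attained), hence $m$ is locally Lipschitz with $0\le m'\le b$ a.e., and since $\Phi\in C^1$ this gives $B_\ep\in W^{1,\infty}_{\rm loc}(\R)$; (iii) $b-B_\ep'=b-\Phi'+m'=\underline b-\ep+m'\ge\underline b-\ep$ a.e. Thus $\mu\ge\underline b-\ep$ for all $\ep\in(0,\underline b)$, so $\mu\ge\underline b$, and combining with the first part, $\mu=\underline b$.

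I expect the first inequality to be entirely routine. The real work is the construction in the second part, and within it the $L^\infty$ bound on $B_\ep$: it is essential that $\underline b$ is a limit of infima, so that the linear lower bound $\int_s^t b\ge(\underline b-\ep)(t-s)$ holds uniformly in the left endpoint $s$ once $t-s\ge R$, rather than merely a liminf along some unspecified sequence — otherwise the running infimum $m$ need not keep $\Phi-m$ bounded. The local Lipschitz estimate for $m$ is the other point requiring a little care, but it follows cleanly from the continuity (hence local boundedness) of $b$.
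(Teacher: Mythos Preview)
Your proof is correct, but the construction for the reverse inequality is genuinely different from the paper's. The paper builds $B$ piecewise on a fixed grid $\{kT\}_{k\in\Z}$: on each $[kT,(k+1)T]$ one sets $B(t)=\int_{kT}^t\bigl(b(\tau)-\varepsilon_k\bigr)\,d\tau$ with $\varepsilon_k$ the average of $b$ over that interval, so that $B(kT)=0$ for every $k$ and $b-B'=\varepsilon_k>\gamma$ on each piece; the bound $\|B\|_\infty\le 2T\overline b$ makes essential use of the hypothesis $\overline b<\infty$. Your running-infimum construction $B_\ep=\Phi-\inf_{t\ge\cdot}\Phi(t)$ is more robust in that it never invokes $\overline b<\infty$ (only $b\ge0$ and the uniform lower bound on long averages are used to get $0\le B_\ep\le(\underline b-\ep)R$). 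On the other hand, the paper's $B$ is piecewise $C^1$ with explicitly located break points $t_k=kT$, a structural feature that is invoked verbatim in later arguments (see the proofs of Lemmas~\ref{lm0-2} and~\ref{spreeding-speed-lem3}); your $B_\ep$ is only locally Lipschitz.

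One small imprecision: the sentence ``applying this with $s=0$ also shows that $\Phi(\tau)\to+\infty$'' overstates what that substitution gives --- it only yields $\Phi(\tau)\ge0$ for $\tau\ge R$. The divergence $\Phi(\tau)\to+\infty$ is nevertheless true (use a slack $\ep'<\ep$ in the liminf), and in any case it is not needed: finiteness and attainment of $m(\tau)$ follow directly from $\Phi(t)\ge\Phi(\tau)$ for $t\ge\tau+R$ together with continuity of $\Phi$ on $[\tau,\tau+R]$.
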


\begin{proof}  The proof of this lemma follows from a proper modification of the proof of \cite[Lemma 3.2]{NaRo1}.  For the sake of completeness we give a proof here. Let $0<\gamma<\underline{b}$. By $\overline{b}<\infty$, there is $T>0$ such that
\begin{equation}\label{dd-1}
\gamma <\frac{1}{T}\int_s^{s+T}b(\tau)d\tau<2\overline{b}, \qquad \forall s\in\R.
\end{equation}
Define
$$
B(t)=\int_{kT}^{t}\Big( b(\tau)-\varepsilon_k \Big)d\tau, \quad \forall t\in[kT, (k+1)T] \quad \text{where} \quad \varepsilon_k:=\frac{1}{T}\int_{kT}^{(k+1)T}b(\tau)ds, \quad \forall\ k\in\mathbb{Z}.
$$
It is clear that $B\in W^{1,\infty}_{\rm loc}(\R) \cap L^\infty(\R)$
 with
\begin{equation}\label{dd-2}
\varepsilon_k=b(t)-B'(t) \quad \text{for}\  \  t\in  (kT,(k+1)T).
\end{equation}
Furthermore, it follows from \eqref{dd-1} that $\|B\|_{\infty}\leq  2T\overline{b}$
and that $\gamma<\varepsilon_k$ for every $k\in\mathbb{Z}$. Hence \eqref{dd-2} implies that
$$
\gamma  \le \sup_{B\in W^{1,\infty}_{\rm loc}(\R)\cap L^{\infty}(\R)}{\rm essinf }_{t\in\R}(b(t)-B'(t)).
$$
Since $\gamma$ is arbitrarily chosen less than $\underline{b}$ we deduce that
$$ \underline{b}\leq \sup_{B\in W^{1,\infty}_{\rm loc}(\R)\cap L^{\infty}(\R)}{\rm essinf }_{t\in\R}(b(t)-B'(t)).$$

On the other hand for each given $B\in W^{1,\infty}_{\rm loc}(\R)\cap L^{\infty}(\R)$ and $t>s$ we have
$$
\frac{1}{t-s}\int_s^{t}b(\tau)d\tau\geq {\rm essinf }_{\tau\in\R}(b(\tau)-B'(\tau))+ \frac{(B(t)-B(s))}{t-s}\geq {\rm essinf }_{\tau\in\R}(b(\tau)-B'(\tau))- \frac{2\|B\|_{\infty}}{t-s}.
$$
Hence
$$
\underline{b}=\liminf_{t-s\to\infty}\frac{1}{t-s}\int_s^{t}b(\tau)d\tau\geq {\rm essinf }_{\tau\in\R}(b(\tau)-B'(\tau)) \quad \forall B\in W^{1,\infty}_{\rm loc}(\R)\cap L^{\infty}(\R).
$$
This completes the proof of the lemma.
\end{proof}

In the following, let  $b\in C(\R, (0,\infty))$ be given and satisfy  that $0<\underline{b}\leq \overline{b}<\infty$. Consider
\begin{equation}
\label{b-eq1}
u_t=u_{xx}+b(t)u(1-u),\quad x\in\R.
\end{equation}
For given $u_0\in C_{\rm unif}^b(\R)$ with $u_0\ge 0$, let $u(t,x;u_0,b)$ be the solution of \eqref{b-eq1} with $u(0,x;u_0,b)=u_0(x)$.

For every  $0<\mu<\underline{\mu}^*:=\sqrt{\underline{b}}$, $x\in\R$, $t\in\R$ and $\omega\in\Omega$, let
\begin{equation}
\label{b-eq2}
c(t;b,\mu)=\frac{\mu^2+b(t)}{\mu},\quad
C(t;b,\mu)=\int_0^t c(\tau;b,\mu)d\tau,
\end{equation}
and
\begin{equation}
\label{b-eq3}
\phi^{\mu}(t,x;b)=e^{-\mu (x-C(t;b,\mu))}.
\end{equation}
Then the function $ \phi^{\mu}$ satisfies
\begin{equation}\label{b-eq4}
\phi^{\mu}_t=\phi^{\mu}_{xx} +b(t)\phi^{\mu},\quad x\in\R.
\end{equation}

\begin{lem}
\label{lm0-1}
Let
$$
\phi_+^\mu(t,x;b)=\min\{1,\phi^\mu(t,x;b)\}.
$$
Then
$$
u(t,x;\phi_+^\mu(0,\cdot;b),b)\le \phi_+^\mu(t,x;b)\quad \forall\,\, t>0,\,\, x\in\R.
$$
\end{lem}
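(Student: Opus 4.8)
The plan is to use the comparison principle for the parabolic equation \eqref{b-eq1}. The key observation is that $\phi_+^\mu(t,x;b) = \min\{1, \phi^\mu(t,x;b)\}$ is a supersolution of \eqref{b-eq1}. Indeed, the constant function $1$ is a (super)solution since $1_t - 1_{xx} - b(t)\cdot 1\cdot(1-1) = 0$, and $\phi^\mu(t,x;b)$ satisfies $\phi^\mu_t = \phi^\mu_{xx} + b(t)\phi^\mu \ge \phi^\mu_{xx} + b(t)\phi^\mu(1-\phi^\mu)$ wherever $\phi^\mu \ge 0$, because $b(t)\phi^\mu(1-\phi^\mu) \le b(t)\phi^\mu$ (using $b(t)>0$ and $\phi^\mu \ge 0$, hence $-b(t)(\phi^\mu)^2 \le 0$). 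So $\phi^\mu$ is itself a supersolution of \eqref{b-eq1}. The minimum of two supersolutions of a parabolic equation of this form is again a (viscosity, or generalized) supersolution, so $\phi_+^\mu$ is a supersolution.

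First I would record that $\phi_+^\mu(0,\cdot;b)$ is exactly the initial datum of the solution $u(t,x;\phi_+^\mu(0,\cdot;b),b)$, so at $t=0$ we have equality $u(0,x;\phi_+^\mu(0,\cdot;b),b) = \phi_+^\mu(0,x;b)$. Then I would invoke the parabolic comparison principle on $(0,\infty)\times\R$: since $u(t,x;\phi_+^\mu(0,\cdot;b),b)$ solves \eqref{b-eq1}, $\phi_+^\mu$ is a bounded supersolution of \eqref{b-eq1}, both are bounded and continuous, and they agree at $t=0$, we conclude $u(t,x;\phi_+^\mu(0,\cdot;b),b) \le \phi_+^\mu(t,x;b)$ for all $t>0$, $x\in\R$. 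This is precisely the claim.

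The one technical point — and the step I expect to be the main obstacle — is the justification that the pointwise minimum of two smooth supersolutions is a legitimate supersolution to which the comparison principle applies, since $\phi_+^\mu$ is only Lipschitz in $x$ (it has a corner along the curve $x = C(t;b,\mu)$). There are two standard ways to handle this: either work with viscosity/weak supersolutions (for which $\min$ of supersolutions is automatically a supersolution, and comparison still holds), or argue directly by splitting the domain along the interface $\{x = C(t;b,\mu)\}$ — on the region where $\phi^\mu \ge 1$ compare $u$ with $1$, on the region where $\phi^\mu \le 1$ compare $u$ with $\phi^\mu$, and check continuity of the bound across the interface — noting that at the interface $\phi^\mu = 1$ so the two bounds match and the corner of $\phi_+^\mu$ points the right way (downward, consistent with being a supersolution). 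Either route is routine; I would present the splitting argument or simply cite the comparison principle in generalized form. The positivity hypothesis $b(t) > 0$ is what makes $\phi^\mu$ a supersolution rather than merely a solution of the linearized equation, and that is essential.
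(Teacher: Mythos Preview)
Your proposal is correct and follows essentially the same approach as the paper: the paper's proof consists of the single sentence ``It follows directly from the comparison principle for parabolic equations,'' and your argument supplies exactly the details behind that invocation (that both $1$ and $\phi^\mu$ are supersolutions of \eqref{b-eq1}, hence so is their minimum). Your discussion of the corner along $x=C(t;b,\mu)$ is more careful than what the paper writes, but it is the standard justification that the paper evidently takes for granted.
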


\begin{proof}
It follows directly from  the comparison principle for parabolic equations.
\end{proof}

\begin{lem}\label{lm0-2}
   For every $\mu$ with $0<\mu<\tilde{\mu}<\min\{2\mu, \underline{\mu}^*\}$,  there exist  $\{t_k\}_{k\in\Z}$  with $t_k<t_{k+1}$ and $\lim_{k\to\pm\infty}t_k=\pm\infty$,   $B_b\in W^{1,\infty}_{\rm loc}(\R)\cap L^{\infty}(\R)$ with $B_b(\cdot)\in C^1((t_k,t_{k+1}))$ for $k\in\Z$,  and a positive real number $d_b$ such that for every $d\geq d_{b}$ the function
  $$\phi^{\mu,d,B_b}(t,x):=e^{-\mu (x-C(t;b,\mu))}-de^{\big(\frac{\tilde{\mu}}{\mu}-1\big)B_b(t)-\tilde{\mu}(x-C(t;b,\mu))} $$
 satisfies
 $$
 \phi^{\mu,d,B_b}_t\le \phi^{\mu,d,B_b}_{xx}+b(t)\phi^{\mu,d,B_b}(1-\phi^{\mu,d,B_b})
$$
for  $t\in (t_k,t_{k+1})$, $x\ge C(t,b,\mu)+ \frac{\ln d}{\tilde \mu-\mu}+\frac{ B_b(t)}{\mu},\,\, k\in\Z$.
 \end{lem}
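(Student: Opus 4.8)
The plan is to verify directly that $\phi^{\mu,d,B_b}$ is a subsolution of \eqref{b-eq1} on the indicated region, once $B_b$ and the points $\{t_k\}$ have been produced from Lemma \ref{average-mean-lemma} and $d_b$ has been chosen large. Throughout write $z=x-C(t;b,\mu)$, $\kappa=\frac{\tilde\mu}{\mu}-1>0$, $\psi_1=e^{-\mu z}$, $\psi_2=e^{\kappa B_b(t)-\tilde\mu z}$ and $\phi=\phi^{\mu,d,B_b}=\psi_1-d\psi_2$. Since $C'(t)=(\mu^2+b(t))/\mu$, the function $\psi_1$ solves the linearized equation $\psi_t=\psi_{xx}+b(t)\psi$ exactly (this is \eqref{b-eq4}), while a routine differentiation gives, on any interval where $B_b$ is $C^1$,
$$(\psi_2)_t-(\psi_2)_{xx}-b(t)\psi_2=\big[\kappa(B_b'(t)+b(t))-\tilde\mu(\tilde\mu-\mu)\big]\psi_2.$$
Consequently, on such an interval,
$$\phi_t-\phi_{xx}-b(t)\phi(1-\phi)=-d\big[\kappa(B_b'(t)+b(t))-\tilde\mu(\tilde\mu-\mu)\big]\psi_2+b(t)\phi^2.$$

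The next step is to note that the indicated region $x\ge C(t;b,\mu)+\frac{\ln d}{\tilde\mu-\mu}+\frac{B_b(t)}{\mu}$ is exactly the set where $\phi\ge 0$ (solve $\psi_1\ge d\psi_2$ for $z$ and use $\kappa/(\tilde\mu-\mu)=1/\mu$), so on it $0\le\phi\le\psi_1$ and hence $\phi^2\le\psi_1^2$. The key algebraic point, and the place where the hypothesis $\tilde\mu<2\mu$ enters, is that on this same set $\psi_1^2\le d^{-\lambda}e^{-B_b(t)}\psi_2$ with $\lambda:=\frac{2\mu-\tilde\mu}{\tilde\mu-\mu}>0$: indeed $\psi_1^2/\psi_2=e^{-(2\mu-\tilde\mu)z-\kappa B_b(t)}$, and substituting $z\ge\frac{\ln d}{\tilde\mu-\mu}+\frac{B_b(t)}{\mu}$ the two contributions of $B_b(t)$ combine to coefficient $-1$ because $\frac{2\mu-\tilde\mu}{\mu}+\kappa=1$. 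Plugging this bound into the identity above, dividing by $d\kappa\psi_2>0$, and using $\lambda+1=1/\kappa$ and $\tilde\mu(\tilde\mu-\mu)/\kappa=\tilde\mu\mu$, one sees that the desired subsolution inequality holds on the region as soon as
$$B_b'(t)+b(t)\Big(1-\frac{d^{-1/\kappa}e^{-B_b(t)}}{\kappa}\Big)\ge\tilde\mu\mu\qquad\text{for a.e. }t.$$

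It remains to produce $B_b$, the breakpoints $\{t_k\}$ and $d_b$ realizing this inequality. Since $\mu<\tilde\mu<\underline{\mu}^*=\sqrt{\underline{b}}$ we have $\tilde\mu\mu<\tilde\mu^2<\underline{b}$, so we may fix $\varepsilon\in(0,1)$ with $\tilde\mu\mu<(1-\varepsilon)\underline{b}$. The least and greatest means of $(1-\varepsilon)b(\cdot)$ are $(1-\varepsilon)\underline{b}$ and $(1-\varepsilon)\overline{b}$, so the construction in the proof of Lemma \ref{average-mean-lemma}, applied to $(1-\varepsilon)b(\cdot)$ with the constant $\tilde\mu\mu$, furnishes $T>0$ and $\tilde B\in W^{1,\infty}_{\rm loc}(\R)\cap L^{\infty}(\R)$, of class $C^1$ on each $(kT,(k+1)T)$, with $(1-\varepsilon)b(t)-\tilde B'(t)>\tilde\mu\mu$ there; set $t_k:=kT$ and $B_b:=-\tilde B$, so that $B_b'(t)+(1-\varepsilon)b(t)>\tilde\mu\mu$ on $(t_k,t_{k+1})$ and $\lim_{k\to\pm\infty}t_k=\pm\infty$. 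Finally pick $d_b\ge 1$ so large that $d^{-1/\kappa}e^{\|B_b\|_\infty}/\kappa\le\varepsilon$ for all $d\ge d_b$; then $1-\frac{d^{-1/\kappa}e^{-B_b(t)}}{\kappa}\ge 1-\varepsilon$, and since $b(t)>0$ the required inequality follows. The main point to watch is that $b$ is not assumed bounded: this is precisely why one must carry the full coefficient $b(t)$ (not a fraction of it) through the estimate in order to dominate the error term $b(t)d^{-1/\kappa}e^{-B_b(t)}/\kappa$, the slack being supplied by the strict inequality $\tilde\mu\mu<\underline{b}$ coming from $\tilde\mu<\underline{\mu}^*$. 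The remaining verifications — that $\phi^{\mu,d,B_b}$ is $C^1$ in $t$ and smooth in $x$ on each strip $(t_k,t_{k+1})\times\R$, and that the region includes its lower boundary, where $\phi^{\mu,d,B_b}=0$ — are immediate.
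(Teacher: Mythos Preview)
Your proof is correct and follows essentially the same approach as the paper's: a direct computation of $\phi_t-\phi_{xx}-b(t)\phi(1-\phi)$, construction of $B_b$ from Lemma~\ref{average-mean-lemma} so that $B_b'+(1-\varepsilon)b>\tilde\mu\mu$ (the paper's $\delta$ is your $\varepsilon$), and a final choice of $d_b$ large. The only organizational difference is that the paper expands $b(t)\phi^2$ fully and regroups into three separately nonpositive pieces (using only $\xi\ge 0$ for one of them), whereas you discard the helpful cross-terms via $\phi^2\le\psi_1^2$ and compensate by using the sharp lower bound on $z$ coming from the region $\{\phi\ge 0\}$; both bookkeeping choices lead to the same threshold condition and the same $d_b$.
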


 \begin{proof}
 First of all, for given  $0<\mu<\tilde{\mu}<\min\{2\mu, \underline{\mu}^*\}$, let  $0<\delta\ll 1$ such that
  $(1-\delta)\underline{b}>\tilde{\mu}\mu$. It then  follows from the arguments of  Lemma \ref{average-mean-lemma} that there exist $T>0$
  and  $B_b\in W^{1,\infty}_{\rm loc}(\R)\cap L^{\infty}(\R)$ such that $B_b\in C^1((t_k,t_{k+1}))$, where $t_k=kT$ for $k\in\Z$, and
 \begin{equation*}
\tilde{\mu}\mu \leq (1-\delta)b(t)+B_b'(t)\quad\text{for all}\ t\in (t_k,t_{k+1}),\,\, k\in\Z.
 \end{equation*}

 Next, fix the above $\delta>0$ and $B_b(t)$. Let  $d>1$ to be determined later. Let $\xi(t,x)=x-C(t;b,\mu)$. We have
 \begin{align}\label{a-eq0}
 & \phi^{\mu,d,B_b}_t-\Big( \phi^{\mu,d,B_b}_{xx}+b(t)\phi^{\mu,d,B_b}(1-\phi^{\mu,d,B_b})\Big)\nonumber\\
&= d\Big[-(\frac{\tilde{\mu}}{\mu}-1)B_b'(t)+\tilde{\mu}^2-\tilde{\mu}c(t;b,\mu)+b(t) \Big]e^{(\frac{\tilde{\mu}}{\mu}-1)B_b(t)-\tilde{\mu}\xi(t,x)}\nonumber\\
 &\,\, + b(t)\Big[ e^{-2\mu \xi(t,x)}-2de^{(\frac{\tilde{\mu}}{\mu}-1)B_b(t)-(\mu+\tilde{\mu})\xi(t,x)}+d^2e^{2(\frac{\tilde{\mu}}{\mu}-1)B_b(t)-2\tilde{\mu}\xi(t,x)}\Big]\nonumber \\
 &= d\Big(\frac{\tilde{\mu}}{\mu}-1 \Big)\Big[ \tilde{\mu}\mu-b(t)-B_b'(t) \Big]e^{(\frac{\tilde{\mu}}{\mu}-1)B_b(t)-\tilde{\mu}\xi(t,x)} +b(t)e^{-2\mu \xi(t,x)} \nonumber\\
 &\,\,  -d\Big[2e^{-\mu \xi(t,x)}-de^{\Big(\frac{\tilde{\mu}}{\mu}-1\Big)B_b(t)-\tilde{\mu}\xi(t,x)} \Big]e^{\Big(\frac{\tilde{\mu}}{\mu}-1\Big)B_b(t)-\tilde{\mu}\xi(t,x)}\nonumber\\
 &= d\Big(\frac{\tilde{\mu}}{\mu}-1 \Big)\Big[ \tilde{\mu}\mu-(1-\delta)b(t)-B_b'(t) \Big]e^{(\frac{\tilde{\mu}}{\mu}-1)B_b(t)-\tilde{\mu}\xi(t,x)} \nonumber\\
 &\,\, +\Big[e^{-(2\mu-\tilde{\mu})\xi(t,x)}-d\delta\Big(\frac{\tilde{\mu}}{\mu}-1\Big)e^{\Big(\frac{\tilde{\mu}}{\mu}-1\Big)B_b(t)}
 \Big]a(\theta_t\omega)e^{-\tilde{\mu} \xi(t,x)} \nonumber\\
 &\,\,  +d\Big[-2e^{-\mu \xi(t,x)}+de^{\Big(\frac{\tilde{\mu}}{\mu}-1\Big)B_b(t)-\tilde{\mu}\xi(t,x)} \Big]e^{\Big(\frac{\tilde{\mu}}{\mu}-1\Big)B_b(t)-\tilde{\mu}\xi(t,x)}
 \end{align}
for $t\in (t_k,t_{k+1})$.

 Observe now that
 \begin{equation*}
 d\delta\Big(\frac{\tilde{\mu}}{\mu}-1\Big)e^{\Big(\frac{\tilde{\mu}}{\mu}-1\Big)B_b(t)}\geq 1, \qquad \forall\ d\geq \max\Big\{ \frac{e^{-\Big(\frac{\tilde{\mu}}{\mu}-1\Big)\|B_b\|_{\infty}}}{\delta\Big( \frac{\tilde{\mu}}{\mu}-1\Big)}, e^{\Big( \frac{\tilde{\mu}}{\mu}-1\Big)\|B_b\|_{\infty}} \Big\}.
 \end{equation*}
 For this choice of $d$, if $
 \phi^{\mu,d,B_b}(t,x)\geq 0$, which is equivalent to $\xi(t,x)=x-C(t;b,\mu)\ge \frac{\ln d}{\tilde \mu-\mu}+\frac{ B_b(t)}{\mu}$,
 then $ \xi(t,x)\geq 0 $ and each term in the expression at the right hand side of \eqref{a-eq0} is less or equal to zero.
 The lemma thus follows.
\end{proof}

 Recall that  $u_0^*(x)=1$ for $x< 0$ and $u_0^*(x)=0$ for $x>0$. By \cite[Theorem 1]{KPP}, the solution of \eqref{b-eq1} with initial function $u_0^*$, denoted by $u(t,x;u_0^*,b)$, exists for $t>0$.

\begin{lem}
\label{lm0-3}
Suppose that $u_\epsilon\in C_{\rm unif}^b(\R)$ with $u_\epsilon\ge 0$ and
$\lim_{\epsilon \to 0}\int_{-\infty}^\infty|u_\epsilon(x)-u_0^*(x)|dx =0.$
Then for each $t>0$,
$$
\lim_{\epsilon \to 0} \|u(t,\cdot;u_\epsilon,b)-u(t,\cdot;u_0^*,b)\|_\infty=0.
$$
\end{lem}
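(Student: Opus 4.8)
The plan is to exploit the order-preserving property of the solution map $u_0\mapsto u(t,\cdot;u_0,b)$ together with the key structural fact that, for two \emph{ordered} solutions, their difference solves a linear parabolic equation whose zeroth-order coefficient is bounded \emph{above} by $b(t)$, no matter how large the data are. This lets me squeeze $u(t,\cdot;u_\epsilon,b)$ between two solutions whose initial data straddle $u_0^*$ and whose difference has $L^1$-norm exactly $\|u_\epsilon-u_0^*\|_{L^1(\R)}$, and then control the spreading of that $L^1$ mass using the heat kernel.

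Concretely, set $\bar u_\epsilon=\max\{u_\epsilon,u_0^*\}$ and $\underline u_\epsilon=\min\{u_\epsilon,u_0^*\}$. These are bounded and measurable (with a single jump at $x=0$), satisfy $0\le\underline u_\epsilon\le u_\epsilon,u_0^*\le\bar u_\epsilon$, and $\bar u_\epsilon-\underline u_\epsilon=|u_\epsilon-u_0^*|$, so $\|\bar u_\epsilon-\underline u_\epsilon\|_{L^1(\R)}=\|u_\epsilon-u_0^*\|_{L^1(\R)}$. Solutions with such data exist for all $t>0$: for $\underline u_\epsilon\in[0,1]$ the interval $[0,1]$ is invariant, and for the bounded datum $\bar u_\epsilon$ one compares with the spatially homogeneous supersolution $\max\{1,\|\bar u_\epsilon\|_\infty\}$. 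The comparison principle for parabolic equations then gives, for every $t>0$,
$$u(t,\cdot;\underline u_\epsilon,b)\le u(t,\cdot;u_\epsilon,b),\ u(t,\cdot;u_0^*,b)\le u(t,\cdot;\bar u_\epsilon,b),$$
hence $\|u(t,\cdot;u_\epsilon,b)-u(t,\cdot;u_0^*,b)\|_\infty\le\|u(t,\cdot;\bar u_\epsilon,b)-u(t,\cdot;\underline u_\epsilon,b)\|_\infty$, and it remains only to show that the right-hand side tends to $0$ as $\epsilon\to0$.

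For this I would establish the general estimate: if $U_0\ge V_0\ge 0$ are bounded, writing $u^{(1)}(t,x):=u(t,x;U_0,b)$, $u^{(2)}(t,x):=u(t,x;V_0,b)$ and $\psi:=u^{(1)}-u^{(2)}$, then $\psi\ge0$ (monotonicity) and a direct computation shows $\psi_t=\psi_{xx}+b(t)\bigl(1-u^{(1)}-u^{(2)}\bigr)\psi\le\psi_{xx}+b(t)\psi$, where the inequality uses $u^{(1)},u^{(2)}\ge0$ and $\psi\ge0$. Comparing $\psi$ with the solution $\tilde\psi$ of $\tilde\psi_t=\tilde\psi_{xx}+b(t)\tilde\psi$, $\tilde\psi(0,\cdot)=U_0-V_0$, namely $\tilde\psi(t,x)=e^{\int_0^t b(s)ds}\bigl(G(t,\cdot)*(U_0-V_0)\bigr)(x)$ with $G(t,x)=(4\pi t)^{-1/2}e^{-x^2/4t}$, the comparison principle yields $0\le\psi(t,x)\le e^{\int_0^t b(s)ds}\bigl(G(t,\cdot)*(U_0-V_0)\bigr)(x)$, so that $\|\psi(t,\cdot)\|_\infty\le e^{\int_0^t b(s)ds}\|G(t,\cdot)\|_\infty\|U_0-V_0\|_{L^1(\R)}=\dfrac{e^{\int_0^t b(s)ds}}{\sqrt{4\pi t}}\|U_0-V_0\|_{L^1(\R)}$. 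Applying this with $U_0=\bar u_\epsilon$ and $V_0=\underline u_\epsilon$ gives, for each fixed $t>0$,
$$\|u(t,\cdot;u_\epsilon,b)-u(t,\cdot;u_0^*,b)\|_\infty\le\frac{e^{\int_0^t b(s)ds}}{\sqrt{4\pi t}}\,\|u_\epsilon-u_0^*\|_{L^1(\R)}\longrightarrow 0\quad\text{as }\epsilon\to0.$$

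The step I expect to require the most care is that no $L^\infty$ bound on $u_\epsilon$ is assumed, so a naive Duhamel--Gronwall argument for $z=u(\cdot;u_\epsilon,b)-u(\cdot;u_0^*,b)$ breaks down: if $u_\epsilon$ has tall narrow spikes (still allowed under the $L^1$ hypothesis), the coefficient $b(t)(1-u(\cdot;u_\epsilon,b)-u(\cdot;u_0^*,b))$ can be very negative near $t=0$, and the crude a priori bound on $u(t,\cdot;u_\epsilon,b)$ coming from the logistic damping degenerates like $1/t$ as $t\to0^+$, which is not integrable against the extra $1/\sqrt t$ loss of the heat kernel. The device of keeping the two comparison solutions \emph{ordered} is precisely what converts the dangerous sign into the harmless bound $1-u^{(1)}-u^{(2)}\le1$; the remaining ingredients (global existence and the comparison principle for bounded, possibly discontinuous, initial data, and $\|G(t,\cdot)*f\|_\infty\le(4\pi t)^{-1/2}\|f\|_{L^1}$) are standard.
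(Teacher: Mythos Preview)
Your argument is correct. The paper does not actually prove this lemma: its entire proof is the one-line citation ``See \cite[Theorem 8]{KPP}.'' By contrast, you give a self-contained argument via monotone sandwiching and the linear heat-kernel bound for the difference of two \emph{ordered} nonnegative solutions. The key observation that $\psi=u^{(1)}-u^{(2)}\ge 0$ satisfies $\psi_t=\psi_{xx}+b(t)(1-u^{(1)}-u^{(2)})\psi\le\psi_{xx}+b(t)\psi$ is exactly what makes the estimate independent of $\|u_\epsilon\|_\infty$, and your explicit bound
\[
\|u(t,\cdot;u_\epsilon,b)-u(t,\cdot;u_0^*,b)\|_\infty\le \frac{e^{\int_0^t b(s)\,ds}}{\sqrt{4\pi t}}\,\|u_\epsilon-u_0^*\|_{L^1(\R)}
\]
is a strict improvement over a bare existence-of-limit statement, since it quantifies the rate. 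The only places that deserve a line of justification are the well-posedness and comparison principle for the bounded but discontinuous data $\bar u_\epsilon,\underline u_\epsilon,u_0^*$; you flag this correctly, and the paper itself relies on the same fact (it invokes \cite[Theorem 1]{KPP} for the existence of $u(t,\cdot;u_0^*,b)$). Note also that each $u_\epsilon\in C^b_{\rm unif}(\R)$ is individually bounded, so $\bar u_\epsilon$ is bounded for every fixed $\epsilon$; your point is that this bound need not be uniform in $\epsilon$, which is precisely why the ordering trick is needed.
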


\begin{proof}
See \cite[Theorem 8]{KPP}.
\end{proof}

\begin{lem}
\label{lm0-4}
For given $u_i\in C_{\rm unif}^b(\R)$ with $u_i\ge 0$ $(i=1,2)$, if $u_1(x)-u_2(x)$ has exactly one simple zero $x_0$ and $u_1(x)>u_2(x)$ for $x<x_0$ and
$u_1(x)<u_2(x)$ for $x>x_0$, then for any $t>0$, there is
$\xi(t)\in [-\infty,\infty]$ such that
$$
u(t,x;u_1,b)\begin{cases} > u(t,x;u_2,x)\quad x< \xi(t)\cr
> u(t,x;u_2,b)\quad x> \xi(t).
\end{cases}
$$
\end{lem}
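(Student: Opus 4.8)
The plan is to run a zero-number argument on the difference of the two solutions. First I would write $u_i(t,x):=u(t,x;u_i,b)$ and $w(t,x):=u_1(t,x)-u_2(t,x)$, and subtract the two copies of \eqref{b-eq1}; using the identity $u_1(1-u_1)-u_2(1-u_2)=(u_1-u_2)(1-u_1-u_2)$ this shows that $w$ solves the linear parabolic equation
\begin{equation*}
w_t = w_{xx} + c(t,x)\,w,\qquad c(t,x):=b(t)\bigl(1-u_1(t,x)-u_2(t,x)\bigr).
\end{equation*}
Since $b\in C(\R,(0,\infty))$ and $0\le u_i(t,x)\le\max\{1,\|u_i\|_\infty\}$ by the comparison principle, the coefficient $c$ is continuous and bounded on $[0,T]\times\R$ for every $T>0$ — this is the point at which the comparison bound on the solutions is genuinely used, since $b$ itself need not be bounded on $\R$. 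By hypothesis $w(0,\cdot)=u_1-u_2$ has exactly one sign change, which is from positive to negative as $x$ increases and is located at $x_0$; in particular $w(0,\cdot)\not\equiv0$, and by a standard backward-uniqueness argument for linear parabolic equations with bounded coefficients $w(t,\cdot)\not\equiv0$ for every $t>0$.

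Next I would invoke the zero-number (intersection-number) theorem (Angenent, Matano): for $t>0$ the zeros of $w(t,\cdot)$ are isolated, the number $z(t)$ of sign changes of $w(t,\cdot)$ is finite and non-increasing in $t$, it drops strictly at any time where $w(t,\cdot)$ develops a multiple zero, and on a time interval on which $z$ is constant the zeros are simple and vary continuously. Since $w(0,\cdot)$ has a single isolated sign change, $z(0)=1$, hence $z(t)\le1$ for all $t>0$.

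Now fix $t>0$. If $z(t)=0$, then $w(t,\cdot)$ is of one sign; being nontrivial, it is $>0$ on all of $\R$ or $<0$ on all of $\R$ by the strong maximum principle, and I take $\xi(t)=+\infty$ or $\xi(t)=-\infty$ accordingly. If $z(t)=1$, then $w(t,\cdot)$ has a unique sign-change point $\xi(t)\in\R$, and the only remaining task is to fix its orientation. Since $z$ is non-increasing with $z(0)=1$ and $z(t)=1$, we have $z(s)=1$ for all $s\in(0,t]$, so the sign-change point $\xi(s)$ is a continuous function on $(0,t]$ with $w_x(s,\xi(s))\ne0$, whence $\operatorname{sign}w_x(s,\xi(s))$ is constant on $(0,t]$. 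Choosing $x_-<x_0<x_+$ close to $x_0$ so that $w(0,x_-)>0>w(0,x_+)$ and using continuity of the solution at $t=0^+$, I get $w(s,x_-)>0>w(s,x_+)$ for $s>0$ small, which pins the orientation at $\xi(s)$, hence at $\xi(t)$, to ``$+$ then $-$''. Thus $w(t,\cdot)\ge0$ on $(-\infty,\xi(t))$ and $\le0$ on $(\xi(t),\infty)$; strictness follows because a one-signed zero of $w(t,\cdot)$ in an open half-line would be an interior parabolic extremum, forcing $w\equiv0$ nearby by the strong maximum principle and contradicting that the zeros of $w(t,\cdot)$ are isolated.

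The step I expect to be the main obstacle is pinning down the orientation of the single sign change: one must rule out that, as $t$ increases, the interface ``flips'' so that $u_1$ dominates to the right instead of to the left. This is exactly handled by the continuity of the zero curve on intervals where $z$ is constant, together with the observation that $z$ can never return to $1$ once it has reached $0$; the remaining ingredients (the linear equation for $w$, the zero-number theorem, and the strong maximum principle) are routine.
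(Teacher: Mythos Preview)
Your proposal is correct and follows the same approach as the paper: derive the linear parabolic equation satisfied by the difference $w=u(t,\cdot;u_1,b)-u(t,\cdot;u_2,b)$ and then appeal to Angenent's zero-number theorem \cite{Ang}. The paper's proof is much terser --- it simply writes down the equation $v_t=v_{xx}+q(t,x)v$ with $q(t,x)=b(t)\bigl(1-u(t,x;u_1,b)-u(t,x;u_2,b)\bigr)$ and cites \cite[Theorems~A,~B]{Ang} directly --- whereas you spell out the case split $z(t)\in\{0,1\}$ and the orientation argument, but these are exactly the details that the citation is meant to cover.
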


\begin{proof}
Let $v(t,x)=u(t,x;u_1,b)-u(t,x;u_2,b)$. Then $v(t,x)$ satisfies
$$
v_t=v_{xx}+q(t,x) v,\quad x\in\R,
$$
where $q(t,x)=b(t)-b(t)(u(t,x;u_1,b)+u(t,x;u_2,b))$.  Note that $v(0,x)$ has exactly one simple zero $x_0$ and $v(0,x)>0$ for $x<x_0$, $v(x)<0$ for $x>x_0$. The lemma then follows from \cite[Theorems A,B]{Ang}.
\end{proof}

Let $x(t,b)$ and $x_+(t,b)$ be such that
$$
u(t,x(t,b);u_0^*,b)=\frac{1}{2}
\quad {\rm and}\quad
u(t,x_+(t,b);\phi_+^\mu(0,\cdot;b),b)=\frac{1}{2}.
$$

\begin{lem}
\label{lm0-5}
For any $t>0$, there holds
\begin{equation}
\label{convergence-eq}
u(t,x+x(t,b);u_0^*,b))\begin{cases} \ge u(t,x+x_+(t,b);\phi_+^\mu(0,\cdot;b),b)\quad x<0\cr
\le u(t,x+x_+(t,b);\phi_+^\mu(0,\cdot;b),b)\quad x>0.
\end{cases}
\end{equation}
\end{lem}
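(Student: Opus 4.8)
The plan is to compare the two solutions $u(t,\cdot;u_0^*,b)$ and $u(t,\cdot;\phi_+^\mu(0,\cdot;b),b)$ via the zero-number machinery of Lemma \ref{lm0-4}, after first checking that their initial data satisfy the hypothesis of that lemma. The initial function $u_0^*$ equals $1$ for $x<0$ and $0$ for $x>0$, while $\phi_+^\mu(0,x;b)=\min\{1,e^{-\mu x}\}$ is $1$ for $x\le 0$ and $e^{-\mu x}\in(0,1)$ for $x>0$. Thus $u_0^*(x)-\phi_+^\mu(0,x;b)$ vanishes for $x\le 0$ and is strictly negative for $x>0$. This is not yet a single simple zero, so the first step is to approximate: replace $u_0^*$ by $u_\epsilon\in C_{\rm unif}^b(\R)$ with $u_\epsilon\ge 0$, $u_\epsilon(x)\le \phi_+^\mu(0,x;b)$ everywhere, $u_\epsilon = \phi_+^\mu(0,\cdot;b)$ on $(-\infty,-\epsilon]$, $u_\epsilon < \phi_+^\mu(0,\cdot;b)$ on $(-\epsilon,\infty)$ with exactly one crossing of any given level, and $\int|u_\epsilon-u_0^*|\,dx\to 0$ as $\epsilon\to 0$. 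Actually it is cleaner to perturb so that $u_\epsilon - \phi_+^\mu(0,\cdot;b)$ has exactly one simple zero at $x_0=x_0(\epsilon)<0$ with $u_\epsilon > \phi_+^\mu$ for $x<x_0$ being impossible since $u_\epsilon\le\phi_+^\mu$; so instead I compare in the other direction and track which solution dominates.

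Concretely, for the $\epsilon$-approximation I would arrange $\phi_+^\mu(0,\cdot;b)-u_\epsilon$ to have exactly one simple zero and the correct sign pattern, then invoke Lemma \ref{lm0-4} with $u_1=\phi_+^\mu(0,\cdot;b)$, $u_2=u_\epsilon$: for every $t>0$ there is $\xi_\epsilon(t)\in[-\infty,\infty]$ with $u(t,x;\phi_+^\mu(0,\cdot;b),b) > u(t,x;u_\epsilon,b)$ for $x<\xi_\epsilon(t)$ and the reverse for $x>\xi_\epsilon(t)$. Next I would identify $\xi_\epsilon(t)$ with the point where the two solutions cross the value $\tfrac12$, up to an error vanishing with $\epsilon$: since both $u(t,\cdot;\phi_+^\mu(0,\cdot;b),b)$ and $u(t,\cdot;u_\epsilon,b)$ are smooth, strictly decreasing near their $\tfrac12$-level set for $t>0$ (this monotonicity in $x$ is preserved from the monotone initial data by the maximum principle, or can be obtained from Lemma \ref{lm0-4} applied to translates), the single crossing point $\xi_\epsilon(t)$ must lie between $x_+(t,b)$ and $x_\epsilon(t,b):=$ (the $\tfrac12$-level of $u(t,\cdot;u_\epsilon,b)$); in fact at $x=\xi_\epsilon(t)$ the two solutions are equal, so if that common value exceeds $\tfrac12$ then $\xi_\epsilon(t)$ lies to the left of both level points, and if it is below $\tfrac12$ it lies to the right — either way, after the shift by the respective level points the sign pattern in \eqref{convergence-eq} holds with $u_\epsilon$ in place of $u_0^*$.

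Finally I pass to the limit $\epsilon\to0$. By Lemma \ref{lm0-3}, $u(t,\cdot;u_\epsilon,b)\to u(t,\cdot;u_0^*,b)$ uniformly on $\R$ for each fixed $t>0$; in particular $x_\epsilon(t,b)\to x(t,b)$ because near the $\tfrac12$-level the limit solution has nonzero $x$-derivative (so the level set is a single point depending continuously on uniform perturbations — a consequence again of the strict spatial monotonicity). Passing to the limit in the two inequalities of the shifted sign pattern gives, for $x<0$, $u(t,x+x(t,b);u_0^*,b)\ge u(t,x+x_+(t,b);\phi_+^\mu(0,\cdot;b),b)$ and, for $x>0$, the reverse inequality, which is exactly \eqref{convergence-eq}. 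The main obstacle I anticipate is the bookkeeping in the second step: Lemma \ref{lm0-4} only locates a single crossing point $\xi_\epsilon(t)$ but does not directly tell me it sits at the $\tfrac12$-levels; one must use strict spatial monotonicity of both solutions for $t>0$ to pin $\xi_\epsilon(t)$ relative to $x_+(t,b)$ and $x_\epsilon(t,b)$ and to ensure the level sets are singletons, and care is needed because $\xi_\epsilon(t)$ could a priori be $\pm\infty$ (handled by noting that if $\xi_\epsilon(t)=+\infty$ or $-\infty$ the desired inequality holds vacuously on one half-line and the other half-line follows from a direct comparison, e.g. $u(t,\cdot;u_\epsilon,b)\le u(t,\cdot;\phi_+^\mu(0,\cdot;b),b)$ everywhere from the ordered initial data). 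The rest is routine continuity.
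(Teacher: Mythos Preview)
There is a genuine gap in your second step. You apply Lemma~\ref{lm0-4} to the \emph{unshifted} pair $(\phi_+^\mu(0,\cdot;b),u_\epsilon)$, obtain a single crossing point $\xi_\epsilon(t)$, and then assert that after shifting each solution by its own $\tfrac12$-level the ordering \eqref{convergence-eq} emerges. This implication is false in general. For two strictly decreasing functions $f,g:\R\to(0,1)$ with a unique crossing at $\xi$ and $g>f$ on $(-\infty,\xi)$, $g<f$ on $(\xi,\infty)$, nothing forces $f(\cdot+x_f)\ge g(\cdot+x_g)$ on $(-\infty,0)$, where $x_f,x_g$ are the respective $\tfrac12$-levels. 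Concretely, $g(x)=(1+e^x)^{-1}$ and $f(x)=(1+e^{x/2})^{-1}$ cross only at $0$ with exactly your sign pattern, share the level point $x_f=x_g=0$, yet $f<g$ on $(-\infty,0)$---the opposite of what \eqref{convergence-eq} requires. Your observation that $\xi_\epsilon(t)$ lies either to the left or to the right of both level points is correct but does not rescue the argument, and the ``$\xi_\epsilon(t)=\pm\infty$'' case gives only an unshifted ordering, which again says nothing about the shifted one.

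What is missing is the use of \emph{spatial translates} of the second initial datum, together with an approximation $\phi_n:=\min\{1-\tfrac1n,e^{-\mu x}\}$ of $\phi_+^\mu$ that stays strictly below $1$. The paper proceeds as follows: choose a smooth nonincreasing $u_\epsilon^*$ with $u_\epsilon^*=1$ for $x\ll-1$ and $u_\epsilon^*=0$ for $x\gg 0$, designed so that $u_\epsilon^*(\cdot)-\phi_n(\cdot+h)$ has exactly one simple zero \emph{for every} $h\in\R$ (this is possible precisely because $\phi_n\le 1-\tfrac1n<1=u_\epsilon^*$ near $-\infty$ and $\phi_n>0=u_\epsilon^*$ near $+\infty$, giving the correct ``$+$ then $-$'' sign). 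For fixed $t>0$ pick $h=x_+^n(t,b)-x_\epsilon(t,b)$; by translation invariance $u(t,x;\phi_n(\cdot+h),b)=u(t,x+h;\phi_n,b)$, and Lemma~\ref{lm0-4} forces the unique crossing of $u(t,\cdot;u_\epsilon^*,b)$ and $u(t,\cdot+h;\phi_n,b)$ to sit exactly at $x_\epsilon(t,b)$ (both equal $\tfrac12$ there). This yields the shifted comparison directly, after which one lets $\epsilon\to0$ via Lemma~\ref{lm0-3} and then $n\to\infty$. Your single approximation cannot play this role: with $\phi_+^\mu$ unmodified you cannot arrange $u_\epsilon>\phi_+^\mu(\cdot+h)$ near $-\infty$ while keeping $u_\epsilon\le 1$, so the sign pattern required by Lemma~\ref{lm0-4} is unavailable for the relevant shifts.
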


\begin{proof}
First, let $\phi_n(x)=\min\{1-\frac{1}{n},\phi^\mu(0,x;b)\}$. Then
$\lim_{n\to\infty}\phi_n(x)=\phi_+^\mu(0,x;b)$
uniformly in $x\in\R$. Then for any given $t>0$,
$$
u(t,x;\phi_+^\mu(0,\cdot;b),b)=\lim_{n\to\infty} u(t,x;\phi_n,b)
$$
uniformly in $x\in\R$. Let $x_+^n(t,b)$ be such that
$u(t,x_+^n(t,b);\phi_n,b)=\frac{1}{2}.$
We have
$$
\lim_{n\to\infty} x_+^n(t,b)=x_+(t,b).
$$

Next, for given $n\ge 1$, let $u_\epsilon^*(x)$ be a nonincreasing function such that $u_\epsilon^*\in C_{\rm unif}^b(\R)$;
$u_\epsilon^*(x)=1$  for $x\ll -1$ and $u_\epsilon^*(x)=0$ for $x\gg 0$; $u_\epsilon^*(x)-\phi_n(x+h)$ has exactly
one simple zero for any $h\in\R$; and
$$
\lim_{\epsilon\to 0}\int_{-\infty}^\infty|u_\epsilon^*(x)-u_0^*(x)|dx=0.
$$
Let $x_\epsilon(t,b)$ be such that
$$
u(t,x;u_\epsilon^*,b)=\frac{1}{2}.
$$
By Lemma \ref{lm0-4}, for any $t>0$,
$$
u(t,x+x_\epsilon(t,b),b)\begin{cases} >u(t,x+x_+^n(t,b);\phi_n,b)\quad x<0\cr
<u(t,x+x_+^n(t,b);\phi_n,b)\quad x>0.
\end{cases}
$$
By Lemma \ref{lm0-3},  for any $t>0$,
$$
\lim_{\epsilon\to 0}\|u(t,\cdot;u_\epsilon^*,b)-u(t,\cdot;u_0^*,b)\|_\infty=0
\quad {\rm and}\quad
\lim_{\epsilon\to 0} x_\epsilon(t,b)=x(t,b).
$$
Letting $\epsilon\to 0$, we get
$$
u(t,x+x(t,b);u_0^*,b)\begin{cases}
\ge u(t,x+x_+^n(t,b);\phi_n,b)\quad x<0\cr
\le u(t,x+x_+^n(t,b);\phi_n,b)\quad x>0.
\end{cases}
$$
Letting $n\to\infty$, the lemma follows.
\end{proof}

\begin{lem}\label{random-fixed-point-lemma}
Let $F: \R\times\Omega\to\R$ be measurable in $\omega\in\Omega$ and continuous hemicompact in $x\in\R$ (i.e for every $\omega\in\Omega$, $F(\cdot,\omega)$ is continuous in $x$ and any sequence { $\{x_n\}_{n\geq 1}\subset \R$} with $|x_n-F(x_n,\omega)|\to 0$ as $n\to\infty$ has a convergent subsequence). Then $F$ has a deterministic fixed point (i.e there is $X: \Omega
\to\R$ such that $F(X(\omega),\omega)=X(\omega)$) if and only if $F$ has random fixed point (i.e there is a measurable function $X: \Omega
\to\R$ such that $F(X(\omega),\omega)=X(\omega)$).
\end{lem}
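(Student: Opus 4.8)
The plan is to deduce the substantive implication --- that a deterministic fixed point yields a random (i.e.\ measurable) one --- from the Kuratowski--Ryll-Nardzewski measurable selection theorem applied to the fixed-point multifunction; the reverse implication is trivial, since a measurable selection is in particular a selection. So I would first dispose of the easy direction, then set $g(x,\omega):=F(x,\omega)-x$, note that $g(\cdot,\omega)$ is continuous for each $\omega$ while $g(x,\cdot)$ is $\mathcal F$-measurable for each $x$, and introduce the multifunction $\Phi(\omega):=\{x\in\R : g(x,\omega)=0\}$, the set of fixed points of $F(\cdot,\omega)$. Continuity of $g(\cdot,\omega)$ makes $\Phi(\omega)$ closed, and the assumed existence of a deterministic fixed point makes $\Phi(\omega)$ nonempty for every $\omega$.

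The heart of the argument --- and the step I expect to be the main obstacle --- is to verify that $\Phi$ is weakly measurable, i.e.\ that $\{\omega:\Phi(\omega)\cap U\neq\emptyset\}\in\mathcal F$ for every open $U\subset\R$; since every open subset of $\R$ is a countable union of closed bounded intervals, it suffices to treat a closed bounded interval $I$. Fixing such an $I$ and a countable dense subset $D_I\subset I$, I would prove the equivalence
\[
\Phi(\omega)\cap I\neq\emptyset \iff \inf_{x\in D_I}|x-F(x,\omega)|=0 .
\]
The forward direction is immediate from density of $D_I$ and continuity of $g(\cdot,\omega)$. For the converse, choosing $x_n\in D_I$ with $|x_n-F(x_n,\omega)|\to 0$, the hemicompactness of $F(\cdot,\omega)$ provides a subsequence $x_{n_k}\to x_*$, necessarily with $x_*\in I$ because $I$ is closed, and then continuity forces $x_*-F(x_*,\omega)=0$, so $x_*\in\Phi(\omega)\cap I$. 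This is exactly the place where the hemicompactness hypothesis is indispensable: on the noncompact line a continuous function may approach $0$ without attaining it, so nonemptiness of $\Phi(\omega)$ cannot be captured by a countable condition without a clustering property. The equivalence then gives
\[
\{\omega:\Phi(\omega)\cap I\neq\emptyset\}=\bigcap_{m\geq 1}\bigcup_{x\in D_I}\Big\{\omega : |x-F(x,\omega)|<\tfrac1m\Big\}\in\mathcal F,
\]
since each set $\{\omega:|x-F(x,\omega)|<1/m\}$ is measurable by measurability of $F$ in $\omega$.

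Finally, $\Phi$ is a nonempty, closed-valued, weakly measurable multifunction from the measurable space $(\Omega,\mathcal F)$ into the Polish space $\R$, so the Kuratowski--Ryll-Nardzewski theorem delivers a measurable selection $X:\Omega\to\R$ with $X(\omega)\in\Phi(\omega)$, i.e.\ $F(X(\omega),\omega)=X(\omega)$; this $X$ is the desired random fixed point. If in the application the deterministic fixed point is only available for $\mathbb P$-a.e.\ $\omega$, I would simply redefine $\Phi$ to equal $\{0\}$ on the exceptional null set, which affects neither the hypotheses of the selection theorem nor the almost-sure conclusion.
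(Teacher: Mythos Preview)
Your proof is correct. The argument via the Kuratowski--Ryll-Nardzewski selection theorem is carried through cleanly: the reduction of weak measurability to closed bounded intervals, the equivalence $\Phi(\omega)\cap I\neq\emptyset \iff \inf_{x\in D_I}|x-F(x,\omega)|=0$, and the use of hemicompactness to force a limit point in $I$ are all sound, and the countable intersection-union description of the measurable set is accurate.

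The paper, however, does not prove this lemma at all: it simply cites \cite[Lemma 4.7]{She4} (W.~Shen, \emph{Traveling waves in diffusive random media}, J.~Dynam.~Diff.~Eq.~\textbf{16} (2004), 1011--1060). So there is no ``paper's approach'' to compare against here; you have supplied a self-contained argument where the paper defers to the literature. Your route via measurable selection is the standard one for results of this type, and is very likely what underlies the cited lemma as well.
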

\begin{proof}
See \cite[Lemma 4.7]{She4}
\end{proof}

\begin{lem}\label{measurable-inverse-process}
Let $f : \R\times\Omega \to (0,1) $ be a measurable function such that for every  $\omega\in\Omega$ the function $f^{\omega}:=f(\cdot,\omega) : \R \to (0,1)$ is  continuously differentiable and  strictly decreasing. Assume that $\lim_{x\to-\infty}f^{\omega}(x)=1$ and $\lim_{x\to\infty}f^{\omega}(x)=0$ for every $\omega\in\Omega$.  Then for every $a\in (0,1)$ the function $\Omega\ni\omega\mapsto f^{\omega,-1}(a)\in\R$ is measurable, where $f^{\omega,-1}$ denotes the inverse function of $f^{\omega}$.
\end{lem}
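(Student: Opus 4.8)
The plan is to reduce the statement to the measurability of the sublevel sets of the map $\omega\mapsto f^{\omega,-1}(a)$, and then to identify each such set with a sublevel set of a section of $f$, which is measurable by hypothesis.

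First I would record that, for each fixed $a\in(0,1)$, the quantity $f^{\omega,-1}(a)$ is a well-defined real number for every $\omega\in\Omega$: since $f^\omega$ is continuous and strictly decreasing with $\lim_{x\to-\infty}f^\omega(x)=1$ and $\lim_{x\to\infty}f^\omega(x)=0$, the intermediate value theorem together with injectivity produces a unique $x\in\R$ with $f^\omega(x)=a$. Thus $\omega\mapsto f^{\omega,-1}(a)$ is a genuine $\R$-valued function, and to prove its measurability it suffices to check that $\{\omega\in\Omega : f^{\omega,-1}(a)\le c\}\in\mathcal F$ for every $c\in\R$, because the half-lines $(-\infty,c]$ generate $\mathcal B(\R)$.

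The key step is the pointwise set identity
\[
\{\omega\in\Omega : f^{\omega,-1}(a)\le c\}=\{\omega\in\Omega : f(c,\omega)\le a\},\qquad c\in\R .
\]
Indeed, since $f^\omega$ is strictly decreasing, $f^{\omega,-1}(a)\le c$ forces $a=f^\omega\big(f^{\omega,-1}(a)\big)\ge f^\omega(c)=f(c,\omega)$; conversely, if $f(c,\omega)=f^\omega(c)\le a=f^\omega\big(f^{\omega,-1}(a)\big)$, then strict monotonicity gives $f^{\omega,-1}(a)\le c$. It remains to observe that the right-hand set lies in $\mathcal F$: the section $\omega\mapsto f(c,\omega)$ is $\mathcal F$-measurable (a section of the jointly measurable $f$; equivalently, $f$ being measurable in $\omega$ and continuous in $x$ is a Carathéodory function, hence jointly measurable, hence has $\mathcal F$-measurable sections), so its sublevel set $\{\omega : f(c,\omega)\le a\}$ is measurable. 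Combining this with the displayed identity shows $\{\omega : f^{\omega,-1}(a)\le c\}\in\mathcal F$ for all $c\in\R$, and therefore $\omega\mapsto f^{\omega,-1}(a)$ is measurable.

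I do not expect any serious obstacle: the argument is essentially bookkeeping. The only points requiring a little care are getting the direction of the monotonicity inequality correct in the set identity, and making precise in what sense $f$ is \emph{measurable} so that its $\omega$-sections inherit measurability; both are addressed above. (If one prefers to avoid invoking section measurability, one may instead write $f^{\omega,-1}(a)=\sup\{q\in\Q : f(q,\omega)>a\}$ and use the countability of $\Q$ together with the measurability of each $\omega\mapsto f(q,\omega)$; this is the same idea packaged differently.)
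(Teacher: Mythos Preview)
Your argument is correct, and it is more elementary than the paper's own proof. The paper does not compute sublevel sets directly; instead it observes that $f^{\omega,-1}(a)$ is the unique fixed point of $F(x,\omega):=f(x,\omega)+x-a$, checks that $F$ is continuous and hemicompact in $x$ for each $\omega$ (any sequence $\{x_n\}$ with $|x_n-F(x_n,\omega)|=|f(x_n,\omega)-a|\to 0$ must converge to $f^{\omega,-1}(a)$), and then invokes a random fixed point lemma (Lemma~\ref{random-fixed-point-lemma}, quoted from \cite{She4}) asserting that such a map has a deterministic fixed point if and only if it has a measurable one. Your route bypasses this machinery entirely: the strict monotonicity of $f^\omega$ yields the set identity $\{f^{\omega,-1}(a)\le c\}=\{f(c,\omega)\le a\}$, reducing measurability of the inverse to measurability of the $\omega$-sections of $f$. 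The paper's approach has the advantage of fitting into a general framework that could handle less explicit situations (e.g.\ where the ``inverse'' is characterized only implicitly as a fixed point rather than via monotonicity), while your approach is shorter, self-contained, and does not rely on an external lemma. Your parenthetical alternative via $f^{\omega,-1}(a)=\sup\{q\in\Q:f(q,\omega)>a\}$ is also a clean way to finish and avoids any discussion of joint versus separate measurability.
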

\begin{proof} Let $a\in(0,1)$ be given. Note that  for every $\omega\in\Omega$, we have that $f^{\omega,-1}(a)$ is the unique fixed point of the function
$$
\R\ni x\mapsto F(x,\omega):=f(x,w)+x-a.
$$
Note that
$$
|x_n-F(x_n,\omega)|=|f(x_n,w)-a|\to 0 \ \text{as}\ n\to \infty \Rightarrow |x_n-f^{\omega,-1}(a)|\to 0 \ \text{as}\ n\to \infty.
$$
Hence the function $F(x,\omega)$ is hemicompact in $x$. By Lemma \ref{random-fixed-point-lemma},  the function $\Omega\ni\omega\mapsto f^{\omega,-1}(a)$ is measurable. The lemma is thus proved.
\end{proof}

\section{Stability of positive random equilibrium solutions}

 In this section, we
  establish some results about the stability of the positive constant equilibrium solution $u\equiv 1$ of \eqref{Main-eq} (resp.  \eqref{nonautonomous-eq}). We also study the existence and stability of {positive random equilibria of \eqref{real-noise-eq}}.
   The results obtained in this section will play a role in later sections for the
  investigation of spreading speeds and take-over property of solutions of  \eqref{Main-eq} (resp.  \eqref{nonautonomous-eq}).

\subsection{Stability of the positive constant equilibrium solution $u\equiv 1$ of \eqref{Main-eq}}

In this subsection, we  establish some results about the stability of the positive constant equilibrium solution $u\equiv 1$ of \eqref{Main-eq} (resp.  \eqref{nonautonomous-eq}).
Observe that $u(t,x)=v(t,x-C(t;\omega))$ with $C(t;\omega)$ being differential in $t$  solves \eqref{Main-eq} if and only if $v(t,x)$ satisfies
\begin{equation}
\label{moving coordinate -eq1}
v_t=v_{xx}+c(t;\omega)v_x+a(\theta_t\omega)v(1-v),
\end{equation}
where $c(t;\omega)=C'(t;\omega)$.  In this subsection, we also study the stability
   of the positive constant equilibrium solution $u\equiv 1$ of \eqref{moving coordinate -eq1}.

We first prove Theorem \ref{stability of const equi solu thm}.

\begin{proof}[Proof of Theorem \ref{stability of const equi solu thm}]
  First, for given  $u_0\in C^{b}_{\rm uinf}(\R)$ with $\inf_{x\in\mathbb{R}}u_0(x)>0$ and $\omega\in {\Omega}$, let $\underline{u}_0:=\min\{1, \inf_{x\in\mathbb{R}}u_0(x)\}$ and $\overline{u}_0:=\max\{1,\sup_{x\in\mathbb{R}}u_0(x)\}$. By {the comparison principle for parabolic equations}, we have that
\begin{equation}\label{stability of const equi solu eq2}
\underline{u}_0\leq  u(t,x;\underline{u}_0,\omega)\leq \min\{1, u(t,x;u_0,\omega)\},\quad \forall\ x\in\R,\ \forall\ t\geq 0
 \end{equation}
 and
 \begin{equation}\label{stability of const equi solu eq3}
 \max\{1, u(t,x;u_0,\omega)\} \leq  u(t,x;\overline{u}_0,\omega)\leq \overline{u}_0, \quad \forall\ x\in\R,\ \forall\ t\geq 0.
\end{equation}
Since $\underline{u}_0$ and $\overline{u}_0$ {are positive  numbers,} by {the uniqueness of solutions of \eqref{Main-eq}} and its corresponding ODE {with a given initial function}, we have that
$$
u(t,x;\underline{u}_0,\omega)=u(t,0;\underline{u}_0,\omega)\quad \text{and}\quad u(t,x;\overline{u}_0,\omega)=u(t,0;\overline{u}_0,\omega)\quad \forall\ x\in\R, \ \forall t\geq0.
$$

Next, let  $\underline{u}(t)=\Big(\frac{1}{u(t,0;\underline{u}_0,\omega)}-1\Big)e^{\int_0^ta(\theta_s\omega)ds}$ and $\overline{u}(t)=\Big(1-\frac{1}{u(t,0;\overline{u}_0,\omega)}\Big)e^{\int_0^ta(\theta_s\omega)ds}$.  It can be verified directly that
\begin{equation*}
\frac{d }{dt}\underline{u}=\frac{d}{dt}\overline{u}=0, \quad t>0.
\end{equation*}
Hence,
$$
\underline{u}(t)=\underline{u}(0) \quad \text{and} \quad \overline{u}(t)=\overline{u}(0),\quad \forall\ t\geq 0,
$$
which is equivalent to
\begin{equation}\label{stability of const equi solu eq4}
1-u(t,x;\underline{u}_0,\omega)=\underline{u}(0)u(t,x;\underline{u}_0,\omega)e^{-\int_0^ta(\theta_s\omega)ds}
\end{equation}
 and
 \begin{equation}\label{stability of const equi solu eq5}
 u(t,x;\overline{u}_0,\omega)-1=\overline{u}(0)u(t,x;\overline{u}_0,\omega)e^{-\int_0^ta(\theta_s\omega)ds}.
\end{equation}

Now, by \eqref{stability of const equi solu eq2}, \eqref{stability of const equi solu eq3}, \eqref{stability of const equi solu eq4} and \eqref{stability of const equi solu eq5}, we have  that
$$
|u(t,x;u_0,\omega)-1|\leq \overline{u}_0\max\{\overline{u}(0),\underline{u}(0)\}e^{-\int_0^ta(\theta_s\omega)ds}, \quad \forall x\in\R,\ t\geq 0,
$$
which implies that inequality \eqref{stability of const equi solu eq1} holds.
Taking $u_0$ to be a positive constant  with $ 0<u_0<1$, it follows from \eqref{stability of const equi solu eq4} that
$$
u(t,x;\underline{u}_0,\omega)=\frac{1}{1+(\frac{1}{\underline{u}_0}-1)e^{-\int_0^ta(\theta_s\omega)ds}}.
$$
If $\|a(\theta_{\cdot}\omega)\|_{L^{1}(0,\infty)}<\infty$, then $\lim_{t\to\infty}u(t,x;\underline{u}_0,\omega)=\frac{1}{1+(\frac{1}{\underline{u}_0}-1)e^{-\|a(\theta_{\cdot}\omega)\|_{L^{1}(0,\infty)}}}<1$, which completes the proof of the theorem.
\end{proof}

\begin{rk}
\begin{itemize}
\item[(1)]
 Theorem \ref{stability of const equi solu thm} guarantees the exponential stability of the trivial constant equilibrium solution $u\equiv 1$ of \eqref{Main-eq} {with respect to the solutions $u(t,x;u_0,\omega)$ with $\inf_{x\in\mathbb{R}^n}u_0(x)>0$} provided that {\bf (H1)} holds. This result  will be useful in the later sections.

\item[(2)] Let $v(t,x;u_0,\omega)$ be the solution of \eqref{moving coordinate -eq1} with $v(0,x;u_0,\omega)=u_0(x)$. The result in Theorem \ref{stability of const equi solu thm} also holds for $v(t,x;u_0,\omega)$.
\end{itemize}
\end{rk}

Let
\begin{equation*}
\underline{c}(\omega)= \liminf_{t-s\to\infty}\frac{1}{t-s}\int_s^t c(\tau;\omega)d\tau,\quad \overline{c}(\omega)=\limsup_{t-s\to\infty}\frac{1}{t-s}\int_s^t c(\tau;\omega)d\tau.
\end{equation*}
 Next, we prove the following theorem about the stability of $u\equiv 1$.

\begin{tm}
\label{uniform-tail-tm}
Assume {\bf (H1)}.
{Suppose that $v(t,x;\omega)$ with $0<v(t,x;\omega)<1$, is an entire solution of \eqref{moving coordinate -eq1} which is nonincreaing in $x$}. For given $\omega\in\Omega$ with $0<{\underline{c}(\omega)\le\overline{c}(\omega)}<\infty$, if  there is $x^*\in\R$ such that $\inf_{t\in\R} v(t,x^*;\omega)>0$, then $\lim_{x\to -\infty} v(t,x;\omega)=1$ uniformly in $t\in\R$.
\end{tm}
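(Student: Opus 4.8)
The plan is to run a comparison argument against the supersolutions constructed in Lemma~\ref{lm0-2}, suitably normalized so that the decay of $v$ at $-\infty$ follows from the exponential growth of the profile $\phi^{\mu}(t,x;b)$ with $b(t)=a(\theta_t\omega)$ and the control on $C(t;b,\mu)$ coming from the hypothesis $\overline c(\omega)<\infty$. First I would fix $\mu$ with $0<\mu<\tilde\mu<\min\{2\mu,\underline\mu^*\}$, where $\underline\mu^*=\sqrt{\underline a}$ and $\underline a=\hat a_{\inf}(\omega)>0$ by {\bf (H1)}; note $v$ solves \eqref{moving coordinate -eq1} with $c(t;\omega)=C'(t;\omega)$, so the associated shifted profiles live in the moving frame $\xi=x-C(t;b,\mu)$. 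The key point is that since $v<1$ everywhere and $\inf_{t\in\R}v(t,x^*;\omega)>0$ for some fixed $x^*$, the function $v$ is, at the level $x=x^*$, bounded below by a positive constant uniformly in time; then I would use the fact that $v$ is nonincreasing in $x$ to get $v(t,x;\omega)\ge v(t,x^*;\omega)$ for all $x\le x^*$, so in particular $1-v$ is small precisely where the supersolution $\phi^{\mu,d,B_b}$ dominates $1-v$.

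The main steps, in order. (1) Reduce to showing $\limsup_{x\to-\infty}\sup_{t\in\R}(1-v(t,x;\omega))=0$. (2) Show $w(t,x):=1-v(t,x;\omega)$ is a subsolution of a Fisher-KPP-type equation: from \eqref{moving coordinate -eq1}, $w_t=w_{xx}+c(t;\omega)w_x - a(\theta_t\omega)(1-w)w$, hence $w_t\le w_{xx}+c(t;\omega)w_x + a(\theta_t\omega)w(1-w)$ after checking signs; more precisely I want $w_t\le w_{xx}+c(t;\omega)w_x$ plus a term that the linear supersolution handles — here the natural object is the linearization at $w=0$, i.e. $w_t\le w_{xx}+c(t;\omega)w_x$, but that is false, so one instead compares with the \emph{nonlinear} supersolution of Lemma~\ref{lm0-2}. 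The correct route: pass to the original (non-moving) coordinate where $u=1-w$ solves \eqref{Main-eq}, so that $w_t=w_{xx}+a(\theta_t\omega)w(w-1)\le w_{xx}$; then $w$ is a subsolution of the heat equation, but that only gives decay on bounded time intervals, not uniformly in $t\in\R$. (3) Therefore I would instead work directly with the moving-frame supersolution: apply Lemma~\ref{lm0-2} to $b(t)=a(\theta_t\omega)$ to obtain, for each large $d$, the function $\phi^{\mu,d,B_b}(t,x)$ which is a supersolution of \eqref{b-eq1}-type equation for $x$ to the right of the interface; reflecting $x\mapsto -x$ (the equation \eqref{Main-eq} is invariant, and $v$ nonincreasing becomes nondecreasing) turns this into a supersolution for $w=1-v$ valid for $x\ll-1$, and the estimate $w(t,x^*)\le 1-\inf_{s}v(s,x^*;\omega)<1$ lets me choose $d$ so that $\phi^{\mu,d,B_b}$ sits above $w$ at $x=x^*$ for all $t$. (4) Conclude by the comparison principle (parabolic maximum principle, cf. the use in Lemmas~\ref{lm0-1}-\ref{lm0-5}) that $w(t,x)\le \phi^{\mu,d,B_b}(t,-x)$ for $x\le x^*$, $t\in\R$; then use $\|B_b\|_\infty<\infty$ and $\overline c(\omega)<\infty$ (which bounds $|C(t;b,\mu)-C(s;b,\mu)|$ linearly, hence keeps the interface position under control) to see that $\phi^{\mu,d,B_b}(t,-x)\to 0$ as $x\to-\infty$ uniformly in $t\in\R$.

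The hard part will be step~(3)--(4): making the comparison work uniformly in $t\in\R$ rather than on a half-line $t\ge t_0$. Because $v$ is an \emph{entire} solution, I cannot simply impose an initial condition; instead I would run the comparison on $[t_0,\infty)\times(-\infty,x^*]$ for arbitrary $t_0$, noting that at the spatial boundary $x=x^*$ we have $w(t,x^*)\le 1-\inf_s v(s,x^*;\omega)$ for all $t\ge t_0$ and at the time-boundary $t=t_0$ we have $w(t_0,x)\le 1$, and then choose $d$ large enough (depending only on $\|B_b\|_\infty$, $\delta$, $\mu$, $\tilde\mu$, and on $1-\inf_s v(s,x^*;\omega)$, \emph{not} on $t_0$) that $\phi^{\mu,d,B_b}$ dominates $w$ on the whole parabolic boundary; since $d$ is independent of $t_0$, letting $t_0\to-\infty$ gives the bound for all $t\in\R$. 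The subtlety is controlling the spatial location $C(t;b,\mu)+\frac{\ln d}{\tilde\mu-\mu}+\frac{B_b(t)}{\mu}$ beyond which Lemma~\ref{lm0-2} applies: one needs this to stay to the left of $x^*$ uniformly, or rather to re-center; this is where $0<\underline c(\omega)\le\overline c(\omega)<\infty$ enters, guaranteeing $C(t;b,\mu)$ grows genuinely linearly so that after a fixed shift the supersolution's validity region covers all of $(-\infty,x^*]$ for all $t$ in the reflected picture. Once the uniform bound $w(t,x)\le\phi^{\mu,d,B_b}(t,-x)$ holds on $\R\times(-\infty,x^*]$, the exponential factor $e^{-\mu(-x-C(t;b,\mu))}$ with $C(t;b,\mu)$ bounded below on the relevant range forces $w\to0$ as $x\to-\infty$ uniformly in $t$, which is exactly $\lim_{x\to-\infty}v(t,x;\omega)=1$ uniformly in $t\in\R$.
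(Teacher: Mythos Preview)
Your approach has a genuine gap that cannot be repaired along the lines you sketch. First, Lemma~\ref{lm0-2} produces a \emph{sub}solution $\phi^{\mu,d,B_b}$ of the Fisher--KPP equation (the inequality there reads $\le$), not a supersolution; reflecting in $x$ does not turn it into a supersolution for $w=1-v$, since $w$ satisfies $w_t=w_{xx}+c(t;\omega)w_x-a(\theta_t\omega)w(1-w)$ with the \emph{opposite} sign on the reaction term. Second, and more fatally, the profile $\phi^{\mu,d,B_b}(t,x)$ is centered at $C(t;b,\mu)=\int_0^t\frac{\mu^2+a(\theta_\tau\omega)}{\mu}\,d\tau$, which is unbounded in both directions as $t$ ranges over $\R$. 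Hence $\phi^{\mu,d,B_b}(t,-x)$ does \emph{not} tend to $0$ as $x\to-\infty$ uniformly in $t\in\R$, and at a fixed spatial boundary $x=x^*$ the value $\phi^{\mu,d,B_b}(t,\pm x^*)$ is not bounded away from $0$ uniformly in $t$. So neither the boundary comparison at $x=x^*$ nor the final limit can be made uniform. You are also conflating two different speeds: $c(t;\omega)$ is the drift in \eqref{moving coordinate -eq1} whose means are $\underline c(\omega),\overline c(\omega)$, whereas $c(t;b,\mu)=(\mu^2+a(\theta_t\omega))/\mu$ is a separate object; the hypothesis $\overline c(\omega)<\infty$ says nothing about boundedness of $C(t;b,\mu)$.

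The paper's argument avoids all of this by going through the stability of $u\equiv1$ rather than through explicit profiles. Set $u_0:=\inf_{t\in\R}v(t,x^*;\omega)>0$ and let $\tilde u_0$ be a cutoff equal to $u_0$ for $x\le x^*-1$ and $0$ for $x\ge x^*$. By Theorem~\ref{stability of const equi solu thm} (and {\bf (H1)}), the solution of \eqref{moving coordinate -eq1} with constant initial datum $u_0$ converges to $1$ uniformly in the starting time $t_0$; pick $T$ so that it exceeds $1-\epsilon$ at time $T$, and use $0<\underline c(\omega)\le\overline c(\omega)<\infty$ only to ensure $\int_0^T c(\tau+t_0;\omega)\,d\tau$ is bounded in $t_0$. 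Then Lemma~\ref{convergence-locally-lm} (local-uniform initial data convergence) gives $N$ with $v(T,-N;\tilde u_0,\theta_{t_0}\omega)>1-2\epsilon$ for all $t_0$. Since $v(t-T,\cdot;\omega)\ge\tilde u_0$ for every $t\in\R$ (monotonicity in $x$ plus the lower bound at $x^*$), the semigroup property yields $v(t,x;\omega)\ge v(T,x;\tilde u_0,\theta_{t-T}\omega)>1-2\epsilon$ for all $t\in\R$ and $x\le -N$. This is the uniform limit you want, obtained without any traveling-wave comparison.
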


To prove the above theorem, we first prove a lemma.

\begin{lem}
\label{convergence-locally-lm}
Let $u_0,u_n\in C_{\rm unif}^b(\R)$ be such that $0\le u_n(x)\le u_0(x)\le 1$. Let $v(t,x;u_0,\theta_{t_0}\omega)$ (respectively $v(t,x;u_n,\theta_{t_0}\omega)$) denote the solution of \eqref{moving coordinate -eq1} with $\omega$ being replaced by $\theta_{t_0}\omega$ and
with initial function $ u_0$ (respectively $u_n $).   If $\lim_{n\to\infty} u_n(x)=u_0(x)$ locally uniformly in $x\in\R$, then for any fixed $t>0$  with $-\infty<\inf_{t_0\in\R}\int_{0}^{t} c(\tau+t_0;\omega)\le \sup_{t_0\in\R} \int_0^t c(\tau+t_0;\omega)<\infty$, we have
$$
\lim_{n\to\infty} v(t,x;u_n,\theta_{t_0}\omega)=v(t ,x;u_0,\theta_{t_0}\omega)
$$
uniformly in $t_0\in\R$ and locally uniformly in $x\in\R$.
\end{lem}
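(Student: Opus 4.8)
The plan is to reduce the statement to the drift-free Fisher–KPP equation and then run a comparison / maximum-principle argument on the difference of the two solutions, using a substitution that removes the one term with the wrong sign.

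First I would record the moving-coordinate conjugation. With $\xi_{t_0}(t):=\int_0^t c(\tau+t_0;\omega)\,d\tau$, the function $w(t,x):=v(t,x-\xi_{t_0}(t))$ turns \eqref{moving coordinate -eq1} (with $\omega$ replaced by $\theta_{t_0}\omega$) into the drift-free equation $w_t=w_{xx}+g_{t_0}(t)\,w(1-w)$, where $g_{t_0}(t):=a(\theta_{t+t_0}\omega)$, and conversely $v(t,x)=w(t,x+\xi_{t_0}(t))$. By hypothesis $\sup_{t_0\in\R}|\xi_{t_0}(t)|=:K(t)<\infty$, so the shift is uniformly bounded in $t_0$ and "locally uniformly in $x$, uniformly in $t_0$" is preserved under the conjugation; hence it suffices to prove: if $0\le u_n^{0}\le u_0^{0}\le1$ in $C^b_{\rm unif}(\R)$ and $u_n^{0}\to u_0^{0}$ locally uniformly, then the drift-free solutions $w_n,w_0$ satisfy $w_n(t,\cdot)\to w_0(t,\cdot)$ locally uniformly in $x$, uniformly over the family $\{g_{t_0}:t_0\in\R\}$. (If $u_0^{0}\equiv0$ everything vanishes, so assume $u_0^{0}\not\equiv0$.)

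By the comparison principle $0\le w_n\le w_0\le1$, so $d_n:=w_0-w_n\ge0$ solves the linear equation $\partial_t d_n=\partial_{xx}d_n+g_{t_0}(t)(1-w_0-w_n)d_n$. Estimating the zeroth-order coefficient crudely by $g_{t_0}(t)$ introduces the factor $e^{\int_0^t g_{t_0}}$, which is not controlled uniformly in $t_0$ when $a$ is unbounded — this is precisely the difficulty created by the paper's generality. I would avoid it by passing to $\rho_n:=d_n/w_0$ (well-defined, $C^1$ and positive for $t>0$ by the strong maximum principle); a direct computation with the equations for $d_n$ and $w_0$ gives
\[
\partial_t\rho_n=\partial_{xx}\rho_n+2\,\frac{\partial_x w_0}{w_0}\,\partial_x\rho_n-g_{t_0}(t)\,w_n\,\rho_n,\qquad 0\le\rho_n\le1,
\]
whose zeroth-order coefficient $-g_{t_0}w_n\le0$ has the favorable sign, and whose drift coefficient does not depend on $n$. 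Thus $\rho_n$ is a nonnegative subsolution of the pure drift-diffusion operator $\partial_t-\partial_{xx}-2(\partial_x w_0/w_0)\partial_x$. Fixing $t^\star>0$, a compact interval $[-L,L]$ and $\varepsilon>0$, and choosing $0<\delta<t^\star$, $R>L$ to be determined, I would compare $\rho_n$ on $Q=(\delta,t^\star)\times(-R,R)$ with the supersolution $\alpha_n+\beta$, where $\alpha_n:=\sup_{|y|\le R}\rho_n(\delta,y)$ and $\beta$ solves the drift-diffusion equation on $Q$ with $\beta(\delta,\cdot)=0$ and $\beta=1$ on $\{x=\pm R\}$. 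The maximum principle gives $\rho_n(t^\star,x)\le\alpha_n+\beta(t^\star,x)$ for $|x|\le R$, hence $d_n(t^\star,x)=\rho_n w_0\le\alpha_n+\beta(t^\star,x)$ for $|x|\le L$. One then needs: (i) $\beta(t^\star,\cdot)$ small on $[-L,L]$ for $R$ large, which requires a bound on the drift $2\partial_x w_0/w_0$ on $Q$ uniform in $t_0$ (here $w_0\ge c(\delta,R)>0$ on $Q$ uniformly in $t_0$ since it dominates $G_\delta*u_0^{0}$, which is bounded below on compacts because $u_0^{0}\not\equiv0$, and $\partial_x w_0$ is controlled by interior parabolic estimates for $w_0$); and (ii) $\alpha_n\to0$, i.e. $\rho_n(\delta,\cdot)\to0$ locally uniformly in $x$, uniformly in $t_0$, which (again using the uniform lower bound for $w_0(\delta,\cdot)$ on compacts) reduces to $d_n(\delta,\cdot)\to0$ locally uniformly, obtainable by running the same scheme on $(0,\delta)$ and the heat-flow/Duhamel bound for $d_n$, letting $\delta\downarrow0$. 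Choosing first $\delta$ small, then $R$ large, then $n$ large makes $\alpha_n+\beta(t^\star,\cdot)<\varepsilon$ on $[-L,L]$, and undoing the conjugation finishes the proof.

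I expect the genuine obstacle to be exactly the uniformity in $t_0$ under the standing generality that $a$ need not be bounded: it is what forces the substitution $\rho_n=d_n/w_0$ (killing both the bad-sign reaction term and the attendant $e^{\int g}$), and it is what makes steps (i)–(ii) delicate, since the interior estimates for $w_0$ and the short-window smallness of $\int_0^\delta g_{t_0}$ must be made uniform over $t_0$ — this is where the assumed local Hölder regularity of $a^\omega$, and the $t_0$-uniform control of windowed integrals of $a^\omega$ that the hypothesis on $c$ in fact supplies in the situations where this lemma is used (there $c\approx\mu+a/\mu$), enter. If one is willing to assume in addition $\sup_{t_0}\int_{t_0}^{t_0+t}a(\theta_\tau\omega)\,d\tau<\infty$, the factor $e^{\int g}$ is uniformly bounded and the argument collapses to a routine linear comparison with a heat-flow barrier.
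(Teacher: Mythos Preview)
Your proposal rests on a misdiagnosis. You assert that the crude linearisation introduces the factor $e^{\int_0^t g_{t_0}}=e^{\int_{t_0}^{t_0+t}a(\theta_\tau\omega)\,d\tau}$ and that this is ``not controlled uniformly in $t_0$ when $a$ is unbounded''. Under the paper's standing hypothesis {\bf (H1)} it \emph{is} controlled: $\hat a_{\sup}(\omega)<\infty$ means there is $r_0$ with $\sup_{t'-s'\ge r_0}\frac{1}{t'-s'}\int_{s'}^{t'}a(\theta_\tau\omega)\,d\tau\le \bar a+1$, and then for every fixed $t>0$ and all $t_0\in\R$ one has $\int_{t_0}^{t_0+t}a(\theta_\tau\omega)\,d\tau\le \int_{t_0}^{t_0+\max(t,r_0)}a(\theta_\tau\omega)\,d\tau\le(\bar a+1)\max(t,r_0)$. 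So the windowed integral, and hence the exponential factor, is bounded uniformly in $t_0$ even though $a$ may be pointwise unbounded. The paper's proof is exactly the ``routine linear comparison'' you describe in your final sentence: with $v^n:=v(\cdot\,;u_0)-v(\cdot\,;u_n)\ge 0$ one has $v^n_t\le v^n_{xx}+c(t+t_0;\omega)v^n_x+a(\theta_{t+t_0}\omega)v^n$, remove the drift by the shift $x\mapsto x-\int_0^tc(\tau+t_0;\omega)\,d\tau$, and compare with the linear heat flow to obtain
\[
0\le v^n(t,\cdot\,;t_0)\le e^{\int_{t_0}^{t_0+t}a(\theta_\tau\omega)\,d\tau}\,e^{t\Delta}\big[(u_0-u_n)(\cdot+\textstyle\int_0^tc(\tau+t_0;\omega)\,d\tau)\big],
\]
which tends to $0$ locally uniformly in $x$ and uniformly in $t_0$ because the shift is bounded by hypothesis, the exponential prefactor is bounded by the argument above, and $e^{t\Delta}(u_0-u_n)\to 0$ locally uniformly by dominated convergence.

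Because you built the whole argument around avoiding a non-issue, the ratio substitution $\rho_n=d_n/w_0$ is unnecessary here, and your sketch has its own soft spots that in fact reintroduce the very control you were trying to dodge. In step~(i) you need a $t_0$-uniform bound on $\partial_x w_0/w_0$ on $Q$; interior parabolic estimates for $\partial_x w_0$ depend on bounds for the coefficient $g_{t_0}$ on $[\delta,t^\star]$, i.e.\ on windowed integrals/sup-bounds of $a$. In step~(ii) you reduce $\alpha_n\to 0$ to $d_n(\delta,\cdot)\to 0$ locally uniformly, uniformly in $t_0$; ``running the same scheme on $(0,\delta)$'' is circular, and a Duhamel/heat-flow bound for $d_n(\delta,\cdot)$ again carries the factor $e^{\int_0^\delta g_{t_0}}$, which you cannot make small uniformly in $t_0$ by taking $\delta\downarrow 0$ without precisely the windowed-integral bound supplied by {\bf (H1)}. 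In short: once you use {\bf (H1)} for what it actually gives, the proof collapses to two lines and your machinery is not needed.
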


\begin{proof} Fix $\omega\in\Omega$.
For every $n\ge 1$, the function $v^n(t,x;t_0):=v(t,x;u_0,\theta_{t_0}\omega)-v(t,x;u_n,\theta_{t_0}\omega)$ is non-negative and satisfies
\begin{align*}
v^n_t&=v^n_{xx}{ + c(t+t_0;\omega)v^n_x} +  a(\theta_{t_0+t}\omega)(1-(v(t,x;u_0,\theta_{t_0}\omega)+v(t,x;u_n,\theta_{t_0}\omega) ))v^n\\
&\le v^n_{xx} + c(t+t_0;\omega)v^n_x +  a(\theta_{t_0+t}\omega)v^n.
\end{align*}
It follows that, for every $n\ge 1$, $\tilde v^n(t,x;t_0):=v^n(t,x-\int_{t_0}^{t_0+t}c(\tau;\omega)d\tau);t_0)$ satisfies
$$
\tilde v^n(t,x;t_0)\le \tilde v^n_{xx}+a(\theta_{t+t_0}\omega)\tilde v^n,
$$
By the comparison principle for parabolic equations,
$$
0\le v^n(t,\cdot;t_0)\le   e^{\int_{t_0}^{t_0+t}a(\theta_{\tau}\omega)d\tau}e^{t\Delta} v^n(0,\cdot+\int_0^tc(\tau+t_0)d\tau).
$$
Note that $\lim_{n\to\infty} \big(  e^{\int_{t_0}^{t_0+t}a(\theta_{\tau}\omega)d\tau}e^{t\Delta} v^n(0,\cdot+\int_0^tc(\tau+t_0)d\tau)\big)(x)=0$ locally uniformly in $x\in\R$ and uniformly in $t_0\in\R$. Hence
$\lim_{n\to\infty} v^n(t,x;t_0)=0$ uniformly in $t_0\in\R$ and locally uniformly in $x\in\R$.
\end{proof}

We now prove Theorem \ref{uniform-tail-tm}.

\begin{proof}[Proof of Theorem \ref{uniform-tail-tm}]
Fix $\omega\in\Omega$ with $-\infty<\underline{c}(\omega)\le\overline{c}(\omega)<\infty$ and
assume that there is $x^*\in\R$ such that $\inf_{t\in\R} v(t,x^*;\omega)>0$.

{Consider the constant function $u_0\equiv \inf_{t\in\R} v(t,x^*;\omega)$. We first note from the hypotheses of Theorem \ref{uniform-tail-tm} that $u_0>0$. Next, let $\tilde u_0(\cdot)$ be uniformly continuous, $0\le \tilde u_0(x)\le u_0$,
$\tilde u_0(x)=u_0$ for $x\le x^*-1$, and $\tilde u_0(x)=0$ for $x\ge x^*$. For any $R>0$, it holds that   $\tilde{u}_0(x-n)=u_0$ for every $|x|\le R$ and  $n\geq R+1+|x^*|$. This shows that
$\lim_{n\to\infty} \tilde u_0(x-n)=u_0$
locally uniformly in $x\in\R$.}
By {\bf (H1)} and the arguments of Theorem \ref{stability of const equi solu thm},
$$
\lim_{t\to\infty} v(t,x;u_0,\theta_{t_0}\omega)=1
$$
uniformly in $t_0\in\R$ and $x\in\R$.
Hence, for any $\epsilon>0$, there is $T>0$ such that
$$
-\infty<\inf_{t_0\in\R}\int_{0}^{T} c(\tau+t_0;\omega)\le \sup_{t_0\in\R} \int_0^T c(\tau+t_0;\omega)<\infty
$$
and
$$
1>v(T,x; u_0, \theta_{t_0}\omega)>1-\epsilon\quad \forall\,\, t_0\in\R,\,\, x\in\R.
$$
By Lemma \ref{convergence-locally-lm},  there is $N>1$ such that
$$
1>v(T,0;\tilde u_0(\cdot-N),\theta_{t_0}\omega)>1-2\epsilon\quad \forall \,\, t_0\in\R.
$$
This implies that
$$
1>v({ T},-N;\tilde u_0,\theta_{t_0}\omega)>1-2\epsilon\quad \forall \,\, t_0\in\R.
$$

Note  that
$$
v(t-T,x;\omega)\ge \tilde u_0(x)\quad \forall\, \, t\in\R,\,\, x\in\R
$$
and
$$
v(t,x;\omega)=v(T,x;v(t-T,\cdot),\theta_{t-T}\omega).
$$
Hence
$$
1>v(t,x;\omega)=v(T,x;v(t-T,\cdot),\theta_{t-T}\omega)
>1-2\epsilon\quad \forall\,\, t\in\R,\,\, x\le -N.
$$
The theorem thus follows.
\end{proof}

\subsection{Existence and stability of {positive random equilibria of \eqref{real-noise-eq}}}

In this subsection, {we first study} the existence and stability of  {positive random equilibria} of \eqref{real-noise-eq},
and then show that \eqref{real-noise-eq} can be transferred to \eqref{Main-eq}.

To this end, we  consider the following corresponding ODE,
\begin{equation}
\label{real-noise-ode}
\dot u=u(1+ \xi(\theta_t\omega)-u).
\end{equation}
Throughout this subsection, we assume that {\bf (H3)} holds.
For given $u_0\in\R$, let $u(t;u_0,\omega)$ be the solution of \eqref{real-noise-ode} with $u(0;u_0,\omega)=u_0$.
It is known that
$$
u(t;u_0,\omega)=\frac{u_0 e^{t+\int_0^ t\xi(\theta_\tau\omega)d\tau}}
{1+u_0\int_0^t e^{ s+\int_0^s \xi(\theta_\tau\omega)d\tau}ds}.
$$

\begin{tm}
\label{real-noise-tm1}
$Y(\omega)=\frac{1}{\int_{-\infty}^0 e^{ s+\int_0^s \xi(\theta_\tau\omega)d\tau}ds}$ is a random equilibrium of \eqref{real-noise-ode},
that is, $u(t;Y(\omega),\omega)=Y(\theta_t\omega)$ for $t\in\R$ and $\omega\in\Omega$.
\end{tm}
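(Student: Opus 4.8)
Since the explicit formula for the solution $u(t;u_0,\omega)$ of \eqref{real-noise-ode} is already available, the plan is to substitute $u_0=Y(\omega)$ into it and simplify. Before that, one must check that $Y(\omega)$ is well defined, i.e.\ that $\int_{-\infty}^0 e^{s+\int_0^s\xi(\theta_\tau\omega)d\tau}ds$ is finite (and positive), for every $\omega$ in the full-measure invariant set on which {\bf (H3)} holds. Near $s=0$ there is no issue because $\xi^\omega$ is locally H\"older continuous, hence locally bounded. The behavior at $s=-\infty$ is exactly where the hypothesis $\hat\xi_{\inf}(\omega)=\underline\xi>-1$ enters: fix $\varepsilon>0$ with $1+\underline\xi-\varepsilon>0$; by definition of $\hat\xi_{\inf}$ there is $S<0$ with $\int_s^0\xi(\theta_\tau\omega)d\tau\ge(\underline\xi-\varepsilon)(-s)$ for all $s\le S$, so that $s+\int_0^s\xi(\theta_\tau\omega)d\tau\le(1+\underline\xi-\varepsilon)s\to-\infty$ and the integrand decays exponentially. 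Hence $0<Y(\omega)<\infty$.

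For the main step I would plug $u_0=Y(\omega)$ into the displayed formula and multiply numerator and denominator by $Y(\omega)^{-1}=\int_{-\infty}^0 e^{s+\int_0^s\xi(\theta_\tau\omega)d\tau}ds$; using $\int_{-\infty}^0+\int_0^t=\int_{-\infty}^t$ this gives
$$
u(t;Y(\omega),\omega)=\frac{e^{t+\int_0^t\xi(\theta_\tau\omega)d\tau}}{\int_{-\infty}^t e^{s+\int_0^s\xi(\theta_\tau\omega)d\tau}ds}.
$$
It then remains to recognize the denominator. Substituting $s=r+t$ and using the additivity $\int_0^{r+t}\xi(\theta_\sigma\omega)d\sigma=\int_0^t\xi(\theta_\sigma\omega)d\sigma+\int_0^r\xi(\theta_{\sigma+t}\omega)d\sigma$ yields
$$
\int_{-\infty}^t e^{s+\int_0^s\xi(\theta_\sigma\omega)d\sigma}ds=e^{t+\int_0^t\xi(\theta_\sigma\omega)d\sigma}\int_{-\infty}^0 e^{r+\int_0^r\xi(\theta_{\sigma+t}\omega)d\sigma}dr=e^{t+\int_0^t\xi(\theta_\sigma\omega)d\sigma}\,Y(\theta_t\omega)^{-1},
$$
where the last equality is just the definition of $Y$ evaluated at $\theta_t\omega$. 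Dividing, the exponential prefactors cancel and one obtains $u(t;Y(\omega),\omega)=Y(\theta_t\omega)$. The computation is valid for every $t\in\R$, since both the solution formula and the substitutions make sense for $t<0$; equivalently, \eqref{real-noise-ode} generates a flow and the identity, checked for $t\ge0$, propagates to all $t$.

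An alternative route, which I would also mention, avoids the closed-form solution entirely: set $J(t):=\int_{-\infty}^t e^{r+\int_0^r\xi(\theta_\sigma\omega)d\sigma}dr$, so $J'(t)=e^{t+\int_0^t\xi(\theta_\sigma\omega)d\sigma}$, and the change of variables above shows $Y(\theta_t\omega)=e^{t+\int_0^t\xi(\theta_\sigma\omega)d\sigma}/J(t)$; a one-line differentiation then gives $\tfrac{d}{dt}Y(\theta_t\omega)=Y(\theta_t\omega)\big(1+\xi(\theta_t\omega)-Y(\theta_t\omega)\big)$ with $Y(\theta_0\omega)=Y(\omega)$, and uniqueness of solutions of \eqref{real-noise-ode} finishes the argument. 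Either way there is no genuine difficulty here: the only step that actually invokes a hypothesis is the well-definedness of $Y(\omega)$, which hinges on $\underline\xi>-1$; everything else is bookkeeping with the cocycle property of $t\mapsto\int_0^t\xi(\theta_\tau\omega)d\tau$.
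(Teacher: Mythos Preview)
Your proof is correct and follows essentially the same route as the paper: both substitute $u_0=Y(\omega)$ into the explicit solution formula, combine $\int_{-\infty}^0+\int_0^t$ into $\int_{-\infty}^t$, and then identify the result with $Y(\theta_t\omega)$ via the change of variables $s=r+t$ and the cocycle identity for $\int_0^\cdot\xi(\theta_\tau\omega)d\tau$. You add a well-definedness check for $Y(\omega)$ (which the paper states without proof just after the theorem) and an alternative differentiation argument; both are welcome but not required.
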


\begin{proof}
First, we note that
\begin{align*}
u(t;Y(\omega),\omega)&=\frac{Y(\omega) e^{t+\int_0^ t\xi(\theta_\tau\omega)d\tau}}
{1+Y(\omega)\int_0^t e^{ s+\int_0^s \xi(\theta_\tau\omega)d\tau}ds}\\
&=\frac{ e^{t+\int_0^ t\xi(\theta_\tau\omega)d\tau}}
{\int_{-\infty}^0 e^{s +\int_0^ s\xi(\theta_\tau\omega)d\tau}ds+\int_0^t e^{ s+\int_0^s \xi(\theta_\tau\omega)d\tau}ds}\\
&=\frac{ e^{t+\int_0^ t\xi(\theta_\tau\omega)d\tau}}
{\int_{-\infty}^t e^{s +\int_0^ s\xi(\theta_\tau\omega)d\tau}ds}.
\end{align*}
Second, note that
\begin{align*}
Y(\theta_t\omega)&=\frac{1}{\int_{-\infty}^0 e^{ s+\int_0^s \xi(\theta_{t+\tau}\omega)d\tau}ds}=\frac{1}{\int_{-\infty}^t e^{(s-t)+\int_0^{s-t} \xi(\theta_{t+\tau}\omega)d\tau}ds}\\
&=\frac{ e^{t+\int_0^ t\xi(\theta_\tau\omega)d\tau}}
{\int_{-\infty}^t e^{s +\int_0^ s\xi(\theta_\tau\omega)d\tau}ds}.
\end{align*}
Hence $u(t;Y(\omega),\omega)=Y(\theta_t\omega)$ and then $Y(\omega)$ is a random equilibrium of \eqref{real-noise-ode}.
\end{proof}

Observe that $0<Y(\omega)<\infty$. Let $\tilde u=\frac{u}{Y(\theta_t\omega)}$ and drop the tilde. We have
\begin{equation}
\label{real-noise-eq1}
u_t=u_{xx}+Y(\theta_t\omega) u(1-u).
\end{equation}
Clearly, \eqref{real-noise-eq1} is of the form \eqref{Main-eq} with $a(\omega)=Y(\omega)$. {Let
$\hat{Y}_{\inf}(\omega)$ and $\hat{Y}_{\sup}(\omega)$ be defined as in \eqref{a-least-mean} and \eqref{a-largest-mean} with $a(\cdot)$ being replaced by $Y(\cdot)$, respectively.}

\begin{lem}
\label{real-noise-lm2} $Y(\omega)$ satisfies the following properties.
\begin{itemize}
\item[(1)] For  a.e. $\omega\in\Omega$, { $0<{ \inf_{ t\in\R}Y(\theta_t\omega)\leq \sup_{t\in\R}Y(\theta_t\omega)}<\infty$.}

\item[(2)] For  a.e.  $\omega\in\Omega$, $\lim_{t\to\infty} \frac{\ln Y(\theta_t\omega)}{t}=0$.

\item[(3)] For  a.e.  $\omega\in\Omega$,  $\lim_{t\to\infty} \frac{\int_0^t Y(\theta_s\omega)ds}{t}=1$.

\item[(4)] ${\hat{Y}_{\inf}}(\omega)=1+\underline{\xi}>0$, and ${\hat{Y}_{\sup}}(\omega)=1+\overline{\xi}<\infty$ for a.e. $\omega\in\Omega$.
\end{itemize}
\end{lem}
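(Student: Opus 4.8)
The plan is to exploit the explicit formula $Y(\omega)=\big(\int_{-\infty}^0 e^{s+\int_0^s\xi(\theta_\tau\omega)\,d\tau}\,ds\big)^{-1}$ together with the hypotheses in \textbf{(H3)}: $\xi\in L^1(\Omega)$, $\int_\Omega\xi\,d\PP=0$, $-1<\hat\xi_{\inf}(\omega)\le\hat\xi_{\sup}(\omega)<\infty$ a.e., and $\inf_{t\in\R}\xi(\theta_t\omega)>-\infty$ a.e. The key analytic point underlying everything is a two-sided control of the integral $I(s):=s+\int_0^s\xi(\theta_\tau\omega)\,d\tau$ as $s\to-\infty$. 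First I would fix $\omega$ in the full-measure set where all the ergodic limits below hold, pick $\gamma$ with $0<\gamma<1+\hat\xi_{\inf}(\omega)$, and use the definition of $\hat\xi_{\inf}$ to produce $r_0>0$ such that $\frac{1}{s'-s}\int_s^{s'}\xi(\theta_\tau\omega)\,d\tau\ge \hat\xi_{\inf}(\omega)-\eps$ whenever $s'-s\ge r_0$; applied with $s'=0$ this gives $I(s)\le -(\,1+\hat\xi_{\inf}(\omega)-\eps\,)|s| =: -\gamma|s|$ for $|s|$ large, so $e^{I(s)}$ decays exponentially and the integral defining $Y(\omega)^{-1}$ converges and is strictly positive; hence $0<Y(\omega)<\infty$. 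Using $\hat\xi_{\sup}(\omega)<\infty$ in the same way gives a matching lower bound $I(s)\ge -C|s|-C'$ for $|s|$ large, and on the bounded piece $s\in[-r_0,0]$ one uses $\inf_{t}\xi(\theta_t\omega)>-\infty$ to bound $I(s)$ from below; together these show $Y(\omega)^{-1}<\infty$, i.e.\ $Y(\omega)>0$ bounded away from $0$ as well — but for the \emph{uniform-in-$t$} statement (1) one must repeat this with $\omega$ replaced by $\theta_t\omega$ and check the bounds are uniform in $t$, which follows from \eqref{a-a-eq1} ($\hat\xi_{\inf}(\theta_t\omega)=\hat\xi_{\inf}(\omega)$ etc.) and from the cocycle identity $\int_0^s\xi(\theta_{\tau+t}\omega)\,d\tau=\int_t^{t+s}\xi(\theta_\tau\omega)\,d\tau$, noting the $r_0$ above depends only on $\omega$ through quantities that are $\theta$-invariant. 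I expect this uniform control to be the main obstacle: one has to be careful that the "large $|s|$" threshold and the constants do not drift as $t$ varies, and that the contribution of the compact interval stays controlled — this is where $\inf_{t\in\R}\xi(\theta_t\omega)>-\infty$ is genuinely needed, and it must be invoked uniformly.

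For part (4), I would compute $\hat Y_{\inf}$ and $\hat Y_{\sup}$ directly. Writing $\frac{1}{t-s}\int_s^t Y(\theta_\tau\omega)\,d\tau$ and using the identity (already extracted in the proof of Theorem~\ref{real-noise-tm1}) $Y(\theta_\tau\omega)=\frac{d}{d\tau}\ln\!\big(\int_{-\infty}^\tau e^{I_0(v)}\,dv\big)$ where $I_0(v)=v+\int_0^v\xi(\theta_\sigma\omega)\,d\sigma$ — more precisely $Y(\theta_\tau\omega) = 1 + \xi(\theta_\tau\omega) - \frac{d}{d\tau}\ln Y(\theta_\tau\omega)$, which follows from the ODE \eqref{real-noise-ode} satisfied by $\tau\mapsto Y(\theta_\tau\omega)$ — one gets
\[
\frac{1}{t-s}\int_s^t Y(\theta_\tau\omega)\,d\tau
= 1 + \frac{1}{t-s}\int_s^t \xi(\theta_\tau\omega)\,d\tau - \frac{\ln Y(\theta_t\omega)-\ln Y(\theta_s\omega)}{t-s}.
\]
Then parts (1) and (2) below handle the last term. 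In fact (2), $\lim_{t\to\pm\infty}\frac1t\ln Y(\theta_t\omega)=0$ a.e., is exactly what kills the boundary term: it follows from the sublinear growth of $\ln Y(\theta_t\omega)$, which in turn follows from the two-sided bounds $-C|t|-C'\le \ln\big(\int_{-\infty}^t e^{I_0}\big) \le -c|t|+C''$ established above applied at $\theta_t\omega$, or more cleanly from Birkhoff's theorem applied to $\xi$ (giving $\frac1t\int_0^t\xi(\theta_\tau\omega)d\tau\to 0$) combined with $Y(\theta_t\omega)$ staying in a fixed compact subset of $(0,\infty)$ as $t$ ranges — no, better: from (1), $Y(\theta_t\omega)$ is bounded, hence $\ln Y(\theta_t\omega)$ is bounded, so divided by $t$ it tends to $0$; that is the cleanest route and it uses only part (1). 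Granting that, taking $\liminf$ and $\limsup$ over $t-s\to\infty$ in the displayed identity and using \eqref{a-largest-mean}/\eqref{a-least-mean} for $\xi$ yields $\hat Y_{\inf}(\omega)=1+\underline\xi$ and $\hat Y_{\sup}(\omega)=1+\overline\xi$; by \textbf{(H3)} these lie in $(0,\infty)$. This also immediately re-proves (3): by Birkhoff applied to $\xi$ (or by $\underline\xi=\overline\xi$ is \emph{not} assumed, so instead) — (3) follows by taking $s=0$ and $t\to\infty$ in the identity, using $\frac1t\int_0^t\xi(\theta_\tau\omega)\,d\tau\to\int_\Omega\xi\,d\PP=0$ (Birkhoff, valid since $\xi\in L^1$) and $\frac1t\ln Y(\theta_t\omega)\to0$ from (1), giving $\frac1t\int_0^t Y(\theta_\tau\omega)\,d\tau\to 1$.

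So the order of operations I would follow is: (i) prove the two-sided exponential decay estimates for $e^{I_0(s)}$ as $s\to-\infty$, uniformly in the base point, using $-1<\hat\xi_{\inf}$, $\hat\xi_{\sup}<\infty$, and $\inf_t\xi(\theta_t\omega)>-\infty$; (ii) deduce (1), the uniform two-sided bounds on $Y(\theta_t\omega)$; (iii) deduce boundedness of $\ln Y(\theta_t\omega)$, hence (2); (iv) derive the averaging identity for $\frac1{t-s}\int_s^t Y(\theta_\tau\omega)\,d\tau$ from the ODE and pass to the limit to get (4), using Birkhoff on $\xi$; (v) specialize to get (3). The substantive work is all in step (i)–(ii); steps (iii)–(v) are short once those bounds are in hand. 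One should also record that all the "a.e." qualifiers combine: the exceptional null set is the union of the Birkhoff-exceptional set for $\xi$, the set where \eqref{a-a-eq1} fails, and the set where $\hat\xi_{\inf},\hat\xi_{\sup}$ are not the constants $\underline\xi,\overline\xi$ (the latter constants existing by the argument of Lemma~\ref{time-avg-vs-space-avg-lemma} applied to $\xi$, as already noted in the text).
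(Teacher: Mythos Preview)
Your proposal is correct and follows essentially the same route as the paper: split the defining integral for $1/Y(\theta_t\omega)$ into a tail piece controlled by the least/greatest mean bounds on $\xi$ (yielding uniform-in-$t$ two-sided bounds, since the threshold $r_0$ depends only on the $\theta$-invariant quantities $\underline\xi,\overline\xi$) and a compact piece controlled by $\inf_{t}\xi(\theta_t\omega)>-\infty$, then derive (2) trivially from the boundedness in (1), and obtain (3) and (4) by integrating the logarithmic ODE identity $\frac{d}{d\tau}\ln Y(\theta_\tau\omega)=1+\xi(\theta_\tau\omega)-Y(\theta_\tau\omega)$ and invoking Birkhoff for $\xi$ together with (1). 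The paper's proof is exactly this, only with the bounds written out explicitly rather than described.
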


\begin{proof}

(1)  First,  note that
\begin{align}\label{aplication-lem-proof-eq2}
\frac{1}{Y(\theta_t\omega)}=& \int_{-\infty}^{-T}e^{s-\int_s^0\xi(\theta_{\tau+t}\omega)\tau}ds+ \int_{-T}^{0}e^{s-\int_s^0\xi(\theta_{\tau+t}\omega)\tau}ds, \quad \forall T>0, \forall \ t\in\R.
\end{align}
 By (H3),  for every $\lambda\in(0,1)$  and a.e. $\omega\in\Omega$ there is $T_{\lambda}\gg 1$,
$$
\lambda\underline{\xi}\leq \frac{1}{T}\int_{0}^T\xi(\theta_{x+\tau}\omega)d\tau\leq \frac{\overline{\xi}}{\lambda}, \quad\forall\ x\in\R, \forall\ T\geq T_{\lambda}.
$$
It then follows that
$$
\int_{-\infty}^{-T_{\lambda}}e^{(1+\frac{\overline{\xi}}{\lambda})s}ds\le \int_{-\infty}^{-T_\lambda}e^{s-\int_s^0\xi(\theta_{\tau+t}\omega)\tau}ds\le \int_{-\infty}^{-T_{\lambda}}e^{(1+\lambda\underline{\xi})s}ds.
\quad 
$$
That is,
\begin{equation}\label{aplication-lem-proof-eq3}
\frac{e^{-(1+\frac{\overline{\xi}}{\lambda})T_{\lambda}}}{(1+\frac{\overline{\xi}}{\lambda})}\leq \int_{-\infty}^{-T_\lambda}e^{s-\int_s^0\xi(\theta_{\tau+t}\omega)\tau}ds\leq \frac{e^{-(1+\lambda\underline{\xi})T_{\lambda}}}{(1+\lambda\overline{\xi})}.
\end{equation}
The first inequality of \eqref{aplication-lem-proof-eq3} {combined with \eqref{aplication-lem-proof-eq2} yields that}
$$
\frac{1}{Y(\theta_t\omega)}\geq \frac{e^{-(1+\frac{\overline{\xi}}{\lambda})T_{\lambda}}}{(1+\frac{\overline{\xi}}{\lambda})}.
$$
Hence
\begin{equation}
\label{uu-00}
Y(\theta_t\omega)\leq (1+\frac{\overline{\xi}}{\lambda})e^{(1+\frac{\overline{\xi}}{\lambda})T_{\lambda}}, \quad\ \forall \ t\in\R.
\end{equation}

Next, {let $\xi_{\inf}(\omega)=\inf_{t\in\R}\xi(\theta_t\omega)$.}   Observe that
$$
\int_{-T_{\lambda}}^{0}e^{s-\int_s^0\xi(\theta_{\tau+t}\omega)\tau}ds\leq \int_{-T_{\lambda}}^{0}e^{s-\int_s^0\xi_{\inf}(\omega)d\tau}ds=\int_{-T_{\lambda}}^{0}e^{s(1+\xi_{\inf}(\omega))}ds.
$$
This combined with the second inequality in \eqref{aplication-lem-proof-eq3} yield that
\begin{equation}\label{uu-01}
\frac{1}{Y(\theta_t\omega)}\leq \frac{e^{-(1+\lambda\underline{\xi})T_{\lambda}}}{(1+\lambda\overline{\xi})}+\int_{-T_{\lambda}}^{0}e^{s(1+\xi_{\inf}(\omega))}ds, \quad \forall\ t\in\R,\,\, a.e.\, \omega\in\Omega.
\end{equation}
It easily follows from { \eqref{uu-00} and \eqref{uu-01}} that
$$ { 0<\inf_{t\in\R} Y(\theta_t\omega)\le \sup_{t\in\R} Y(\theta_t\omega)<\infty} ,\quad  a.e. \ \omega\in\Omega.$$
The result (1) then follows.

(2) It follows from (1).

(3)
 Note that
$$
\frac{\dot{Y}(\theta_t\omega)}{Y(\theta_t\omega)}=1+\xi(\theta_t\omega) -Y(\theta_t\omega).
$$
Integrating both sides with respect to $t$, we obtain that
\begin{equation}\label{aplication-lem-proof-eq4''}
\frac{1}{t-s}\int_s^tY(\theta_{\sigma}\omega)d\sigma+\frac{\ln(Y(\theta_{t}\omega))
-\ln(Y(\theta_{s}\omega))}{t-s}=1+\frac{1}{t-s}\int_s^t\xi(\theta_{\sigma}\omega)d\sigma.
\end{equation}
The result (3) follows from (2) and the fact that $\lim_{t\to\infty}\frac{1}{t}\int_0^t\xi(\theta_s\omega)ds=0$  for a.e. $\omega\in\Omega$.

(4)  Observe that \eqref{aplication-lem-proof-eq4''}  implies that
\begin{align*}
1+\underline{\xi}\leq & { \hat{Y}_{\inf}(\omega)}+\limsup_{t-s\to\infty}\frac{\ln(Y(\theta_{t}\omega))-\ln(Y(\theta_{s}\omega))}{t-s}
\end{align*}
and
\begin{align*}
1+\underline{\xi}\geq & { \hat{Y}_{\inf}(\omega)}+\liminf_{t-s\to\infty}\frac{\ln(Y(\theta_{t}\omega))-\ln(Y(\theta_{s}\omega))}{t-s}
\end{align*}
 for a.e. $\omega\in\Omega$.
It follows from (1) that
$$
\liminf_{t-s\to\infty}\frac{\ln(Y(\theta_{t}\omega))-\ln(Y(\theta_{s}\omega))}{t-s}
=\limsup_{t-s\to\infty}\frac{\ln(Y(\theta_{t}\omega))-\ln(Y(\theta_{s}\omega))}{t-s}=0\quad  {\rm for}\,\, a.e.\, \omega\in\Omega.
$$
Hence we have that $  {\hat{Y}_{\inf}(\omega)}=1+\underline{\xi}>0$ {for a.e. $\omega\in\Omega$}.
Similar arguments yield that  $ {\hat {Y}_{\sup}(\omega)}=1+\overline{\xi}$ {for a.e. $\omega\in\Omega$}.
\end{proof}

\begin{coro}\label{stability-random-equilibrium-cor}
For given  $u_0\in C^{b}_{\rm uinf}(\R)$ with $\inf_{x}u_0(x)>0$,  for a.e. $\omega\in\Omega$,
\begin{equation*}
\lim_{t\to\infty}\|\frac{u(t,\cdot;u_0,\theta_{t_0}\omega)}{Y(\theta_t\theta_{t_0}\omega)}-1\|_{\infty}=0
\end{equation*}
 uniformly in $t_0\in\R$, where $u(t,x;u_0,\theta_{t_0}\omega)$ is the solution of \eqref{real-noise-eq} with $u(0,x;u_0,\theta_{t_0}\omega)=u_0(x)$.
 \end{coro}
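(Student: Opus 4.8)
\emph{Proof proposal.} The plan is to transform \eqref{real-noise-eq} into \eqref{Main-eq} with $a(\cdot)=Y(\cdot)$, exactly as in the passage leading to \eqref{real-noise-eq1}, and then combine the explicit stability estimate \eqref{stability of const equi solu eq1} of Theorem \ref{stability of const equi solu thm} with the quantitative information on $Y(\theta_\cdot\omega)$ collected in Lemma \ref{real-noise-lm2}. The only subtlety is to make all constants independent of the shift $t_0$.

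Fix $\omega$ in the full-measure set on which the conclusions of Lemma \ref{real-noise-lm2}(1) and (4) hold, and let $0<m_\omega\le M_\omega<\infty$ be such that $m_\omega\le Y(\theta_t\omega)\le M_\omega$ for all $t\in\R$. Given $u_0\in C^b_{\rm unif}(\R)$ with $\inf_x u_0>0$ and $t_0\in\R$, set
$$v(t,x):=\frac{u(t,x;u_0,\theta_{t_0}\omega)}{Y(\theta_{t+t_0}\omega)}.$$
Since $t\mapsto Y(\theta_t\omega)$ solves \eqref{real-noise-ode} (Theorem \ref{real-noise-tm1}; equivalently $\dot Y(\theta_t\omega)/Y(\theta_t\omega)=1+\xi(\theta_t\omega)-Y(\theta_t\omega)$, as recorded in the proof of Lemma \ref{real-noise-lm2}(3)), a direct differentiation shows that $v$ solves $v_t=v_{xx}+Y(\theta_{t+t_0}\omega)\,v(1-v)$ with $v(0,\cdot)=u_0/Y(\theta_{t_0}\omega)$; that is, $v(t,x)=v(t,x;v_0^{t_0},\theta_{t_0}\omega)$, where $v_0^{t_0}:=u_0/Y(\theta_{t_0}\omega)$ and $v(t,x;\cdot,\cdot)$ denotes the solution map of \eqref{real-noise-eq1}, which is \eqref{Main-eq} with $a(\cdot)=Y(\cdot)$. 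As $\inf_x v_0^{t_0}=\inf_x u_0/Y(\theta_{t_0}\omega)>0$, the estimate \eqref{stability of const equi solu eq1} applied to this equation (which holds for every $\omega$) yields
$$\Big\|\frac{u(t,\cdot;u_0,\theta_{t_0}\omega)}{Y(\theta_{t+t_0}\omega)}-1\Big\|_\infty\le M\big(v_0^{t_0}\big)\,e^{-\int_0^t Y(\theta_{s+t_0}\omega)\,ds},\qquad t\ge 0 .$$

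It remains to control the two factors on the right uniformly in $t_0$. For the prefactor: $v_0^{t_0}$ is the constant multiple $Y(\theta_{t_0}\omega)^{-1}$ of $u_0$, and $1/M_\omega\le Y(\theta_{t_0}\omega)^{-1}\le 1/m_\omega$, so $\inf_x v_0^{t_0}$, $\sup_x v_0^{t_0}$ and $\|v_0^{t_0}\|_\infty$ all lie in a fixed compact subset of $(0,\infty)$ depending only on $\omega$ and $u_0$; inspecting the formula for $M(\cdot)$ in Theorem \ref{stability of const equi solu thm} gives $M(v_0^{t_0})\le M^*<\infty$ with $M^*$ independent of $t_0$. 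For the exponent: fix $0<\tilde a<1+\underline\xi=\hat Y_{\inf}(\omega)$ by Lemma \ref{real-noise-lm2}(4). By the definition \eqref{a-least-mean} of $\hat Y_{\inf}$ (applied to $Y$), the non-decreasing quantity $r\mapsto \inf_{t-s\ge r}\frac{1}{t-s}\int_s^t Y(\theta_\tau\omega)\,d\tau$ converges to $1+\underline\xi$, so there is $r_0>0$ with $\frac{1}{t-s}\int_s^t Y(\theta_\tau\omega)\,d\tau\ge\tilde a$ whenever $t-s\ge r_0$; taking $s=t_0$ and $t=t_0+\bar t$ with $\bar t\ge r_0$ gives $\int_0^{\bar t}Y(\theta_{s+t_0}\omega)\,ds\ge\tilde a\,\bar t$ for every $t_0\in\R$. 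Hence
$$\Big\|\frac{u(t,\cdot;u_0,\theta_{t_0}\omega)}{Y(\theta_{t+t_0}\omega)}-1\Big\|_\infty\le M^*e^{-\tilde a t},\qquad t\ge r_0,\ t_0\in\R,$$
which tends to $0$ as $t\to\infty$, uniformly in $t_0$, proving the corollary. The only genuine point is this last step: one must guarantee that the prefactor $M(v_0^{t_0})$ and the decay rate $\int_0^t Y(\theta_{s+t_0}\omega)\,ds$ are controlled uniformly in $t_0$, and both follow immediately from the two-sided bound and the positive least mean of $Y(\theta_\cdot\omega)$ furnished by Lemma \ref{real-noise-lm2}.
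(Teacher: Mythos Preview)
Your proof is correct and follows exactly the route the paper intends: the paper's own proof is the one-line ``It follows from Theorem \ref{stability of const equi solu thm}, Theorem \ref{real-noise-tm1}, and Lemma \ref{real-noise-lm2}'', and you have simply spelled out how these three ingredients combine --- the change of variables $v=u/Y$ (justified by Theorem \ref{real-noise-tm1}), the estimate \eqref{stability of const equi solu eq1}, and the uniform-in-$t_0$ control of the prefactor and exponent via Lemma \ref{real-noise-lm2}(1) and (4).
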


\begin{proof}
It follows from Theorem \ref{stability of const equi solu thm}, Theorem  \ref{real-noise-tm1}, and Lemma \ref{real-noise-lm2}.
\end{proof}

\section{Deterministic and linearly {determinate}  spreading speed interval}

 In this section, we discuss the spreading properties of solutions of \eqref{Main-eq} with nonempty compactly supported initials or front like initials
 and prove Theorems  \ref{spreading-speeds-tm} and \ref{spreading-speeds-tm-0}.

We first prove some lemmas.

\begin{lem}\label{spreeding-speed-lem1}
Let  $\omega\in\Omega_0$. If there is a positive constant $c(\omega)>0$ such that
\begin{equation}\label{spreading-speed-lem1-eq1}
\liminf_{t\to\infty}\inf_{s\in\R,|x|\leq c(\omega)t}u(t,x;u_0,\theta_s\omega)>0,\quad \forall\ u_0\in X_c^+
\end{equation}
then $c^*_{\inf}(\omega)\geq c(\omega)$.
Therefore it holds that
\begin{equation}\label{spreading-speed-lem1-eq3}
c^*_{\inf}(\omega)=\sup\{c\in\R^+\ |\ \liminf_{t\to\infty}\inf_{s\in\R,|x|\leq ct}u(t,x;u_0,\theta_s\omega)>0,\quad \forall\ u_0\in X_c^+ \}.
\end{equation}
\end{lem}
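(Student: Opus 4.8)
The plan is to reduce the lemma to its first assertion — that the stated hypothesis forces $c_{\inf}^*(\omega)\ge c(\omega)$ — after which \eqref{spreading-speed-lem1-eq3} is purely formal. Write $S(\omega)$ for the supremum on the right of \eqref{spreading-speed-lem1-eq3}. The inequality $c_{\inf}^*(\omega)\le S(\omega)$ holds trivially, since any $c'\in C_{\inf}(\omega)$ satisfies $\limsup_{t\to\infty}\sup_{s\in\R,|x|\le c't}|u(t,x;u_0,\theta_s\omega)-1|=0$, hence $\liminf_{t\to\infty}\inf_{s\in\R,|x|\le c't}u(t,x;u_0,\theta_s\omega)\ge 1>0$, so $c'\le S(\omega)$. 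For the reverse inequality: given any $c<S(\omega)$ there is $c'>c$ for which the defining $\liminf$-condition holds, and the first assertion applied with this $c'$ in place of $c(\omega)$ gives $c_{\inf}^*(\omega)\ge c'>c$; letting $c\uparrow S(\omega)$ gives $c_{\inf}^*(\omega)\ge S(\omega)$. So the real work is the first assertion, and I would in fact prove the (formally stronger) statement $(0,c(\omega))\subseteq C_{\inf}(\omega)$.

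To that end fix $u_0\in X_c^+$, real numbers $c<c'<c(\omega)$, and $\eps>0$; the goal is $u(t,x;u_0,\theta_s\omega)\to 1$ as $t\to\infty$, uniformly over $s\in\R$ and $|x|\le ct$. The hypothesis furnishes $\delta_0\in(0,1)$ and $T_0$ with $u(t,x;u_0,\theta_s\omega)\ge\delta_0$ for all $t\ge T_0$, $|x|\le c(\omega)t$, $s\in\R$. I would then freeze a large time lag $\tau$ and, for $T$ large enough that $T-\tau\ge T_0$ and $c'T\le c(\omega)(T-\tau)$, observe that at time $T-\tau$ the solution dominates $\delta_0$ on all of $[-c'T,c'T]$. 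Restarting the equation at time $T-\tau$ and using the parabolic comparison principle, I would sandwich the solution from below on $[T-\tau,T]$ by $\underline v=w-z$, where $w$ solves the same equation started at $T-\tau$ from the constant $\delta_0$, and $\underline v$ solves it started from a continuous profile lying below both $\delta_0$ and $u(T-\tau,\cdot;u_0,\theta_s\omega)$, equal to $\delta_0$ on $[-c'T+1,c'T-1]$ and vanishing outside $[-c'T,c'T]$; in particular $0\le z=w-\underline v$ and $z_\sigma\le z_{xx}+a(\theta_{s+\sigma}\omega)z$, with $z(T-\tau,\cdot)$ supported in $\{|x|\ge c'T-1\}$ and bounded by $\delta_0$.

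Two estimates then close the argument. First, by the pointwise-in-$\omega$ bound of Theorem \ref{stability of const equi solu thm} together with $\hat a_{\inf}(\omega)=\underline a>0$ from \eqref{omega-0}, one has $|w(T,x)-1|\le M(\delta_0)\exp(-\int_{s+T-\tau}^{s+T}a(\theta_r\omega)dr)\le M(\delta_0)e^{-\underline a\tau/2}$ for $\tau$ large, uniformly in $x$ and $s$; choose $\tau$ so that this is $<\eps/2$. Second, comparing $z$ with the solution of the linear equation $\zeta_\sigma=\zeta_{xx}+a(\theta_{s+\sigma}\omega)\zeta$ and estimating with the Gaussian heat kernel gives, for $|x|\le cT$, $z(T,x)\le\delta_0\exp(\int_{s+T-\tau}^{s+T}a(\theta_r\omega)dr)\exp(-((c'-c)T-1)^2/(4\tau))$; here $\hat a_{\sup}(\omega)=\bar a<\infty$ bounds the first exponential by $e^{(\bar a+1)\tau}$ for $\tau$ large (length-$\tau$ window averages of $a$ being controlled uniformly), and with $\tau$ now fixed the whole bound tends to $0$ as $T\to\infty$. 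Combining the two, $u(T,x;u_0,\theta_s\omega)\ge w(T,x)-z(T,x)\ge 1-\eps$ for all $s$ and $|x|\le cT$ once $T$ is large; since moreover $u(T,x;u_0,\theta_s\omega)\le u(T,x;\max\{1,\|u_0\|_\infty\},\theta_s\omega)\to 1$ uniformly in $x,s$ (again by Theorem \ref{stability of const equi solu thm} and $\underline a>0$), we get $\sup_{s\in\R,|x|\le cT}|u(T,x;u_0,\theta_s\omega)-1|\to 0$, i.e. $c\in C_{\inf}(\omega)$. As $u_0\in X_c^+$ and $c<c(\omega)$ were arbitrary, $c_{\inf}^*(\omega)\ge c(\omega)$.

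The hard part is the second estimate: the linearized equation has no finite speed of propagation, so the interior smallness of the defect $z$ must be pulled out of the quadratic-in-$T$ Gaussian exponent, and this dictates the order of quantifiers — $\tau$ has to be chosen first (large enough both for $w$ to reach $1-\eps/2$ and for the window averages of $a$ to be tamed via $\hat a_{\sup}(\omega)<\infty$), and only then may $T\to\infty$, so that the interval on which the lower bound $\delta_0$ is already available outruns the diffusive spreading of the boundary defect. A secondary, purely bookkeeping point is that every estimate taken from Theorem \ref{stability of const equi solu thm} and from $\hat a_{\inf}(\omega),\hat a_{\sup}(\omega)$ must be uniform in the time shift $s$; this holds because for $\omega\in\Omega_0$ the averages $\tfrac1t\int_{s'}^{s'+t}a(\theta_r\omega)dr$ converge, to a value between $\underline a$ and $\bar a$, uniformly in $s'\in\R$.
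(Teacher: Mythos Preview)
Your argument is correct and follows the same underlying strategy as the paper: fix $c<\tilde c<c(\omega)$, use the hypothesis to pin the solution above a constant $\delta_0$ on an expanding interval at a time $\tau$ units earlier, then combine the stability estimate of Theorem~\ref{stability of const equi solu thm} with a heat-kernel locality bound to upgrade this to uniform convergence to $1$ on $\{|x|\le ct\}$. The only real difference is packaging. The paper runs a contradiction argument along a sequence $(s_n,t_n,x_n)$ and, for the locality step, invokes Lemma~\ref{convergence-locally-lm} (solutions with initial data converging locally uniformly to a constant converge to the solution from that constant, uniformly in the time shift). You instead write the defect $z=w-\underline v$ explicitly, linearize it, and control it by a Gaussian tail; but this is exactly how Lemma~\ref{convergence-locally-lm} itself is proved, so the content is the same. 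Your version has the virtue of making the order of quantifiers transparent (choose $\tau$ large once and for all using $\underline a,\bar a$, then send $T\to\infty$); the paper's is marginally cleaner because the heat-kernel step is isolated as a reusable lemma rather than redone inline.
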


\begin{proof} Let  $\omega\in\Omega_0$ and $c(\omega)$ satisfy \eqref{spreading-speed-lem1-eq1}.  Let $0<c<c(\omega)$ and $u_0\in X_c^+$ be given. Choose $\tilde{c}\in (c, c(\omega))$. It follows from \eqref{spreading-speed-lem1-eq1} that
$$
m_{\tilde{c}}:=\liminf_{t\to\infty}\inf_{s\in\R,|x|\leq \tilde{c}t}u(t,x;u_0,\theta_s\omega)>0.
$$
There is $T\gg 1$ such that
\begin{equation}\label{spreading-tm-proof-eq00}
\frac{m_{\tilde{c}}}{2}\leq \min_{|x|\leq \tilde{c}t}u(t,x;u_0,\theta_s\omega),\quad \forall\ s\in \R,\  t\geq T.
\end{equation}

Suppose by contradiction that there is $(s_n,t_n,x_n)\in\R\times\R^+\times\R$ with $|x_n|\leq ct_n$ for every $n\geq 1$ and $t_n\to\infty$ such that
\begin{equation}\label{spreading-tm-proof-eq01}
0<\delta:=\inf_{n\geq 1}|u(t_n,x_n;u_0,\theta_{s_n}\omega)-1|.
\end{equation}
Let $0<\varepsilon<1$ be fixed.  By {\bf (H1)}, Theorem \ref{stability of const equi solu thm} implies that there is $\tilde{T}_\varepsilon>T$ such that
\begin{equation}\label{spreading-tm-proof-eq02}
\|u(t,\cdot;\frac{m_{\tilde{c}}}{2},\theta_s\omega)-1\|_\infty+\|u(t,\cdot;\|u_0\|_{\infty},\theta_s\omega)-1\|_\infty\leq \varepsilon, \quad \forall t\geq \tilde{T}_\varepsilon, \ \forall\ s\in\R.
\end{equation}
Observe that  $ (\tilde{c}-c)(t_n-\tilde{T}_\varepsilon)-2c\tilde{T}_\varepsilon\to\infty$ as $n\to\infty$. Then there is $n_\varepsilon$ such that
$$ (\tilde{c}-c)(t_n-\tilde{T}_\varepsilon)-2c\tilde{T}_\varepsilon\geq T,\quad \forall \ \ n\geq n_\varepsilon.$$
For every $n\geq n_\varepsilon$, let $u_{0n}\in C^{b}_{\rm unif}(\R)$ with $\|u_{0n}\|_{\infty}\leq \frac{m_{\tilde{c}}}{2}$ and
\begin{equation}\label{spreading-tm-proof-eq03}
u_{0n}(x)=\begin{cases}
\frac{m_{\tilde{c}}}{2},\quad \ |x|\leq (\tilde{c}-c)(t_n-\tilde{T}_\varepsilon)-2c\tilde{T}_\varepsilon,\cr
0,\qquad |x|\geq (\tilde{c}-c)(t_n-\tilde{T}_\varepsilon)-c\tilde{T}_\varepsilon.
\end{cases}
\end{equation}

Since $|x|\leq (\tilde{c}-c)(t_n-\tilde{T}_\varepsilon)-c\tilde{T}_\varepsilon $ implies that $|x+x_n|\leq \tilde{c}(t_n-\tilde{T}_\varepsilon)$ for every $n\geq n_\varepsilon$, it follows from \eqref{spreading-tm-proof-eq00} and \eqref{spreading-tm-proof-eq03} that
$$
u_{0n}(x)\leq u(t_n-\tilde{T}_\varepsilon,x+x_n;u_0,\theta_{s_n}\omega),\quad \forall\ x\in\R, \ \forall\ n\geq n_\varepsilon.
$$
By the comparison principle for parabolic equations, we have
\begin{equation}\label{spreading-tm-proof-eq04}
u(t,x;u_{0n},\theta_{\tilde s_n}\omega)\leq  u(t+t_n-\tilde{T}_\varepsilon,x+x_n; u_0,\theta_{s_n}\omega),\quad \forall \ x\in\R, \ t>0, \ n\geq n_\varepsilon,
\end{equation}
where $\tilde{s}_n=s_n+t_n-\tilde{T}_\varepsilon$.

Observe from the definition of $u_{0n}$ that $u_{0n}(x)\to\frac{m_{\tilde{c}}}{2}$ as $n\to\infty$ locally uniformly in $x\in\R$. It then follows from Lemma \ref{convergence-locally-lm} that for every $t>0$,
\begin{equation}\label{spreading-tm-proof-eq05}
|u(t,x;u_{0n},\theta_{\tilde s_n}\omega)- u(t,x;\frac{m_{\tilde{c}}}{2},\theta_{\tilde s_n}\omega) |\to0\ \text{as}\ n\to\infty \ \text{locally uniformly in}\ x\in\R.
\end{equation}
By \eqref{spreading-tm-proof-eq02}, we have that
$$
1-\varepsilon\leq u(\tilde{T}_\epsilon,x;\frac{m_{\tilde{c}}}{2},\theta_{\tilde s_n}\omega)
,\quad \forall\ x\in\R, \forall \ n\geq 1.$$
This combined with \eqref{spreading-tm-proof-eq04} and \eqref{spreading-tm-proof-eq05} {yields} that
\begin{equation}\label{spreading-tm-proof-eq06}
1-\varepsilon\leq \liminf_{n\to\infty}u(\tilde{T}_\varepsilon,0;u_{0n},\theta_{\tilde s_n}\omega)\leq \liminf_{n\to\infty}u(t_n,x_n;u_{0},\theta_{s_n}\omega).
\end{equation}

On the other hand, since $\|u_0(\cdot+x_n)\|_{\infty}=\|u_0\|_{\infty}$ for every $n\geq 1$, it follows from {the comparison principle} for parabolic equations that
\begin{align*}
u(t_n,x;\|u_0\|_{\infty},\theta_{s_n}\omega)\geq &  u(t_n,x;u_0(\cdot+x_n),\theta_{s_n}\omega),\cr
=& u(t_n,x+x_n;u_0,\theta_{s_n}\omega),\quad \forall \ x\in\R, \ t>0, \ n\geq 1.
\end{align*}
This together  with \eqref{spreading-tm-proof-eq02} implies that
$$
\limsup_{n\to\infty}u(t_n,x_n;u_0,\theta_{s_n}\omega)\leq \limsup_{n\to\infty}\|u(t_n,\cdot;\|u_0\|_{\infty},\theta_{s_n}\omega)\|_{\infty} \leq 1+\varepsilon,
$$
which combined with \eqref{spreading-tm-proof-eq06} yields that
$$
1-\varepsilon\leq \limsup_{n\to\infty}u(t_n,x_n;u_0,\theta_{s_n}\omega)\leq \limsup_{n\to\infty}u(t_n,x_n;u_0,\theta_{s_n}\omega)\leq 1+\varepsilon, \forall \ \varepsilon>0.
$$
Letting $\varepsilon\to0$,  we obtain that
$$
\lim_{n\to\infty}|u(t_n,x_n;u_0,\theta_{s_n}\omega)-1|=0,
$$
which contradicts  to \eqref{spreading-tm-proof-eq01}. Thus we have that
$$
\lim_{t\to\infty}\sup_{s\in\R,|x|\leq ct}|u(t,x;u_0,\theta_{s}\omega)-1|=0, \quad \forall u_0\in X_c^+,\ \forall 0<c<c(\omega).
$$
This implies that $c^*_{\inf}(\omega)\geq c(\omega)$.

Therefore, we have that
$$
 c^{*}_{\inf}(\omega)\geq \sup\{c\in\R^+\ |\ \liminf_{t\to\infty}\inf_{|x|\leq ct, s\in\R}u(t,x;u_0,\theta_s\omega)>0,\quad \forall\ u_0\in X_c^+ \}.
$$
On the other hand, it is clear from the definition of $C^{*}_{\sup}(\omega)$ that
$$
 c^{*}_{\inf}(\omega)\leq
\sup\{c\in\R^+\ |\ \liminf_{t\to\infty}\inf_{|x|\leq ct, s\in\R}u(t,x;u_0,\theta_s\omega)>0,\quad \forall\ u_0\in X_c^+ \}.$$
The lemma is thus proved.
\end{proof}

\begin{lem}\label{spreeding-speed-lem2}
 Let $b>0$ be a positive  number and $v_0\in X_c^+$.
{Let $v(t,x;v_0,b)$ be the solution} of
\begin{equation*}
\begin{cases}
v_t=v_{xx}+bv(1-v), \quad x\in\R\cr
v(0,x)=v_0(x),\quad x\in\R.
\end{cases}
\end{equation*}
 Then
$$
\lim_{t\to\infty}\min_{|x|\leq ct}v(t,x;v_0,b)=1,\quad \forall\ 0<c<2\sqrt{b}.
$$
\end{lem}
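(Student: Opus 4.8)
The plan is to prove the classical spreading property of the constant-coefficient Fisher--KPP equation in two stages: first the one-sided lower bound $\liminf_{t\to\infty}\inf_{|x|\le ct}v(t,x;v_0,b)>0$ for every $c\in(0,2\sqrt b)$, then the upgrade of this bound to convergence to $1$. I would begin with the observation that since $v_0\ge 0$ has nonempty support we have $v_0\not\equiv 0$, and since $0\le v\le M_0:=\max\{1,\|v_0\|_\infty\}$ (comparison with the spatially constant solutions) the zeroth order coefficient $b(1-v)$ of the equation $v_t=v_{xx}+b(1-v)v$ is bounded; hence the parabolic strong maximum principle gives $v(1,x;v_0,b)>0$ for all $x\in\R$, so for each $R>0$ there is $\delta_R>0$ with $v(1,x;v_0,b)\ge\delta_R$ whenever $|x|\le R$.

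For the first stage I would fix $c<c'<2\sqrt b$ and choose $\eps_1\in(0,1)$ so small that $c'<2\sqrt{b(1-\eps_1)}$, set $\omega_\alpha:=\sqrt{b(1-\eps_1)-\alpha^2/4}$ (so $\omega_\alpha\ge\omega_1>0$) and $\lambda_\alpha:=\alpha/2$ for $|\alpha|\le c'$, and introduce, for a small $\eps>0$ and each speed $\alpha\in[-c',c']$, the compactly supported moving profile
$$
\underline v_\alpha(t,x):=\eps\, e^{-\lambda_\alpha(x-\alpha t)}\cos\!\big(\omega_\alpha(x-\alpha t)\big)\quad\text{for }\ |x-\alpha t|\le\tfrac{\pi}{2\omega_\alpha},
$$
extended by $0$ for $|x-\alpha t|>\tfrac{\pi}{2\omega_\alpha}$. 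Since $r^2+\alpha r+b(1-\eps_1)=0$ has roots $-\lambda_\alpha\pm i\omega_\alpha$, on the interior of its support $\underline v_\alpha$ solves $\partial_t\underline v_\alpha=\partial_{xx}\underline v_\alpha+b(1-\eps_1)\underline v_\alpha$ identically, while the two corners of the support are of the benign type where $\partial_x\underline v_\alpha$ jumps upward; thus $\underline v_\alpha$ is a generalized subsolution of $w_t=w_{xx}+b(1-\eps_1)w$. On the support $0\le\underline v_\alpha\le\eps\, e^{c'\pi/(4\omega_1)}$, so requiring $\eps\, e^{c'\pi/(4\omega_1)}\le\min\{\eps_1,\delta_{\pi/(2\omega_1)}\}$ yields, on the one hand, $b(1-\eps_1)\underline v_\alpha\le b\,\underline v_\alpha(1-\underline v_\alpha)$, making each $\underline v_\alpha$ a generalized subsolution of \eqref{b-eq1} with $b(\cdot)\equiv b$, and, on the other hand, $\underline v_\alpha(0,\cdot)\le v(1,\cdot;v_0,b)$ (the support of $\underline v_\alpha(0,\cdot)$ lies in $[-\tfrac{\pi}{2\omega_1},\tfrac{\pi}{2\omega_1}]$). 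By the comparison principle $v(1+t,x;v_0,b)\ge\underline v_\alpha(t,x)$ for all $t\ge0$, $x\in\R$, $|\alpha|\le c'$; evaluating at $x=\alpha t$ gives $v(1+t,\alpha t;v_0,b)\ge\eps$, and letting $\alpha$ sweep $[-c',c']$ gives $v(s,x;v_0,b)\ge\eps$ whenever $|x|\le c'(s-1)$. As $c<c'$, this forces $\inf_{|x|\le cs}v(s,x;v_0,b)\ge\eps$ for all large $s$, so $\liminf_{s\to\infty}\inf_{|x|\le cs}v(s,x;v_0,b)\ge\eps>0$.

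For the second stage I would invoke exactly the comparison argument already used to prove Lemma \ref{spreeding-speed-lem1}, which relies only on the stability estimate of Theorem \ref{stability of const equi solu thm} specialized to $a(\theta_s\omega)\equiv b$ (so that $u(t,\cdot;u_0,b)\to 1$ uniformly at rate $e^{-bt}$ whenever $\inf u_0>0$). In brief: given $\eta>0$, choose $S$ with $\eps e^{bS}/(1+\eps(e^{bS}-1))\ge 1-\eta/2$ and then $L$ large with $w(S,0)\ge 1-\eta$, where $w$ solves \eqref{b-eq1} with $b(\cdot)\equiv b$ and $w(0,\cdot)$ equal to $\eps$ on $[-L,L]$ and $0$ elsewhere (legitimate since $w(S,0)\to\eps e^{bS}/(1+\eps(e^{bS}-1))$ as $L\to\infty$ by continuous dependence); for $t$ so large that $ct+L\le c'(t-S)$, every $x_0$ with $|x_0|\le ct$ satisfies $v(t-S,\cdot;v_0,b)\ge\eps$ on $[x_0-L,x_0+L]$, whence $v(t,x_0;v_0,b)\ge w(S,0)\ge 1-\eta$ by translation invariance and comparison; combined with $v(t,x;v_0,b)\le M_0e^{bt}/(1+M_0(e^{bt}-1))\to 1$, this gives $\lim_{t\to\infty}\min_{|x|\le ct}v(t,x;v_0,b)=1$.

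The delicate point is the construction in the second paragraph: producing subsolutions that propagate at every speed in $(-2\sqrt b,2\sqrt b)$ simultaneously, absorbing the logistic nonlinearity into the factor $1-\eps_1$ with a single $\eps$ valid uniformly in $\alpha$, and handling the corners of their compact supports. Once this family is in hand, the evaluation of $\underline v_\alpha$ along $x=\alpha t$ fills the entire cone $|x|\le c't$, and the remainder is routine comparison together with Theorem \ref{stability of const equi solu thm}.
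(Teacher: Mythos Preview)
Your argument is correct and essentially reproduces the classical Aronson--Weinberger hair-trigger proof; the paper, by contrast, does not prove this lemma at all but simply cites \cite[Page 66, Corollary 1]{ArWe2}. So your route is genuinely different only in the sense that it is self-contained: you build the family of compactly supported moving subsolutions $\underline v_\alpha$ at every subcritical speed $|\alpha|\le c'$, use them to fill the cone $|x|\le c'(s-1)$ with a uniform positive lower bound, and then upgrade to convergence to $1$ via the stability of the constant equilibrium (Theorem~\ref{stability of const equi solu thm}) and a compactness/comparison step in the spirit of Lemma~\ref{spreeding-speed-lem1}. A couple of cosmetic points worth tightening: your symbol $\omega_1$ is being used for $\min_{|\alpha|\le c'}\omega_\alpha=\omega_{c'}$, which is slightly confusing notation; the initial datum $\eps\,\mathbf 1_{[-L,L]}$ is not in $C_{\rm unif}^b(\R)$, so either smooth it or invoke the well-posedness for bounded measurable data; and the time-shift in Stage~2 should read $ct+L\le c'(t-S-1)$ to match the $s-1$ from Stage~1. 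None of these affects the validity of the argument.
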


\begin{proof}It follows from \cite[Page 66, Corollary 1]{ArWe2}.
\end{proof}

\begin{lem}\label{spreeding-speed-lem3}
Assume {\bf (H1)}. Then for every $\omega\in\Omega_0$,
\begin{equation}\label{spreading-speed-lem3-eq1}
\liminf_{t\to\infty}\inf_{s\in\R,|x|\le ct}u(t,x;u_0,\theta_s\omega)>0, \quad \forall\ 0<c<2\sqrt{\underline{a}}, \ \forall\ u_0\in X_c^+.
\end{equation}
Therefore,
$ c^{*}_{\inf}(\omega)\geq 2\sqrt{\underline{a}}, \quad \forall\ \omega\in\Omega_0.$
\end{lem}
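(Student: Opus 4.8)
The plan is to establish \eqref{spreading-speed-lem3-eq1}; the concluding inequality $c^*_{\inf}(\omega)\ge 2\sqrt{\underline a}$ is then immediate from Lemma~\ref{spreeding-speed-lem1} applied with $c(\omega)$ an arbitrary number below $2\sqrt{\underline a}$. So fix $\omega\in\Omega_0$, $c\in(0,2\sqrt{\underline a})$ and $u_0\in X_c^+$. First I would reduce to a small, compactly supported datum with $0\le u_0\le1$: since $u_0$ is continuous, nonnegative and not identically zero there is a continuous bump $\underline u_0\in X_c^+$ with $\underline u_0\le\min\{u_0,1\}$, and by the comparison principle it suffices to treat $\underline u_0$, which we rename $u_0$. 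For such data $0\le u(t,x;u_0,\theta_s\omega)\le1$, so $u(1-u)\ge0$, hence $u_t\ge u_{xx}$; comparison with the heat semigroup gives, for a fixed $T_1>0$, a lower bound $u(T_1,\cdot;u_0,\theta_s\omega)\ge\psi:=e^{T_1\partial_{xx}}u_0>0$ that is \emph{independent of $s$}. In particular $u(T_1,\cdot;u_0,\theta_s\omega)\ge\rho_0:=\min_{[-L,L]}\psi>0$ on any prescribed interval $[-L,L]$, uniformly in $s$; this is the ``seed'' to be propagated.

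The obstruction to propagating the seed by a direct comparison with a constant-coefficient Fisher--KPP equation (and then quoting Lemma~\ref{spreeding-speed-lem2}) is that $a(\theta_\tau\omega)$ is not bounded below by a positive constant, so no such pointwise comparison is available. To circumvent this I would invoke Lemma~\ref{average-mean-lemma}: choose $\eta>0$, $\gamma\in(0,\underline a)$, $\gamma'$ and $\delta>0$ with $(c+\delta)^2/4<\gamma'<(1-\eta)\gamma$ (possible precisely because $c<2\sqrt{\underline a}$), and, using $\underline a=\hat a_{\inf}(\omega)$, obtain $B\in W^{1,\infty}_{\mathrm{loc}}(\R)\cap L^\infty(\R)$ with $\|B\|_\infty\le K$ and $a(\theta_\tau\omega)\ge\gamma+B'(\tau)$ a.e. Integrating, $\int_{t_1}^{t_2}a(\theta_\tau\omega)\,d\tau\ge\gamma(t_2-t_1)-2K$ for all $t_1<t_2$, a bound invariant under the shifts $\tau\mapsto a(\theta_{s+\tau}\omega)$. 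This corrector is the device (as in Lemma~\ref{lm0-2}) that turns the \emph{average} lower bound $\underline a$ into something usable pointwise inside a subsolution.

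Next I would build a moving, compactly supported subsolution carrying the seed. For $|v|\le c+\delta$ set $\nu_v=\sqrt{\gamma'-v^2/4}>0$ and $\Phi_v(\xi)=e^{-v\xi/2}\cos(\nu_v\xi)$ on $[-\pi/2\nu_v,\pi/2\nu_v]$, extended by $0$; then $\Phi_v''+v\Phi_v'+\gamma'\Phi_v=0$ on the open arch, $\Phi_v(0)=1$, the half-width $\pi/2\nu_v$ and $\|\Phi_v\|_\infty$ are bounded uniformly in this range of $v$, and the corners created by the zero-extension have the favorable (convex) sign. Writing $\tilde a(\tau)=a(\theta_{s+T_1+\tau}\omega)$, the function $\Psi(\tau,x)=m(\tau)\Phi_v(x-\zeta_0-v\tau)$, with amplitude governed by the logistic-type ODE $m'/m=\tilde a(\tau)(1-m\|\Phi_v\|_\infty)-\gamma'$ and $m(0)=m_0$ small, is a generalized subsolution of \eqref{Main-eq} along $\theta_{s+T_1}\omega$ (one checks this from $\Phi_v''=-v\Phi_v'-\gamma'\Phi_v$, the endpoint Diracs only helping). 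While $m\|\Phi_v\|_\infty\le\eta$ one has $m'/m\ge\tilde a(\tau)(1-\eta)-\gamma'$, so $\ln(m(\tau)/m(\tau_1))\ge[(1-\eta)\gamma-\gamma'](\tau-\tau_1)-2K\ge-2K$; tracking the last time $m\|\Phi_v\|_\infty$ crosses the level $\eta$ then yields $m(\tau)\ge\rho_{**}>0$ for all $\tau\ge0$, with $\rho_{**}$ independent of $s$ and of $v$ in the admissible range.

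To finish, fix $L$ larger than $\sup_{|v|\le c+\delta}\pi/2\nu_v$ and $m_0$ so small that $m_0\sup_{|v|\le c+\delta}\|\Phi_v\|_\infty\le\rho_0$. Given large $t$, any $s\in\R$, and any target $x_*$ with $|x_*|\le ct$, launch the bump from some $\zeta_0\in[-L,L]$ with speed $v=(x_*-\zeta_0)/(t-T_1)$, chosen so the bump is centred at $x_*$ at time $t-T_1$; for $t$ large, $|v|\le(ct+L)/(t-T_1)\le c+\delta$, so the construction applies. Since $\Psi(0,\cdot)\le\rho_0\le u(T_1,\cdot;u_0,\theta_s\omega)$, comparison on $[T_1,\infty)$ gives $u(t,x_*;u_0,\theta_s\omega)\ge\Psi(t-T_1,x_*)=m(t-T_1)\Phi_v(0)\ge\rho_{**}$. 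As $s$ and $x_*$ were arbitrary, $\inf_{s\in\R,|x|\le ct}u(t,x;u_0,\theta_s\omega)\ge\rho_{**}$ for all large $t$, which is \eqref{spreading-speed-lem3-eq1}. The hard part, and the reason for the careful bookkeeping above, is exactly to keep $\rho_{**}>0$ uniformly in the shift $s$ while simultaneously letting the bump speed approach $2\sqrt{\gamma'}$, hence $2\sqrt{\underline a}$; this is what forces the precise interplay of $\eta$, $\gamma$, $\gamma'$ and $\delta$ near the threshold in the second step.
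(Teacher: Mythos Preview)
Your argument is correct, but the paper's proof takes a markedly shorter route. Both proofs hinge on Lemma~\ref{average-mean-lemma} to convert the least-mean lower bound $\underline a$ into a pointwise inequality $a(\theta_t\omega)\ge \gamma+B'(t)$ (or $\delta a(\theta_t\omega)\ge b+A'(t)$) via a bounded corrector. From there the approaches diverge. The paper simply multiplies the solution $v(t,x;b)$ of the \emph{constant-coefficient} Fisher--KPP equation $v_t=v_{xx}+bv(1-v)$ by $\sigma e^{A(t+s)}$ with a small constant $\sigma$, checks in one line that this product is a subsolution of \eqref{Main-eq} (uniformly in $s$), and then reads off the uniform lower bound directly from the Aronson--Weinberger spreading result (Lemma~\ref{spreeding-speed-lem2}). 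You instead reconstruct a spreading subsolution from scratch: a compactly supported principal-eigenfunction bump $\Phi_v$ translated at constant speed, with amplitude $m(\tau)$ solving a logistic ODE, and you track $m$ uniformly in $s$ using the integrated bound $\int_{t_1}^{t_2}a\ge\gamma(t_2-t_1)-2K$. This buys you self-containment---your proof does not invoke Lemma~\ref{spreeding-speed-lem2}---at the cost of the extra bookkeeping (choice of $\eta,\gamma,\gamma',\delta$, the last-crossing argument for $m$, and the bump-launching step). The paper's multiplicative change of variables is the slicker device here: it absorbs the time-heterogeneity into a bounded factor and delegates all the spreading analysis to the autonomous case.
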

\begin{proof}
First, fix $\omega\in\Omega_0$ and  $u_0\in X_c^+$. Let  $0<c<2\sqrt{\underline{a}}$ be given. Choose $b>c$ and $0<\delta<1$ such that $
c<2\sqrt{b}<2\sqrt{\delta \underline{a}}.$
By the proof of Lemma \ref{average-mean-lemma}, there are $\{t_{k}\}_{k\in\Z}$ with $t_k<t_{k+1}$, $t_{k}\to\pm \infty$ as $k\to\pm\infty$ and  $A\in W^{1,\infty}_{loc}(\R)\cap L^\infty(\R)$ such that $A\in C^1(t_k,t_{k+1})$ for every $k$ and
$$
b\leq \delta a(\theta_t\omega)-A'(t),\quad \text{for}\,\, t\in (t_k,t_{k+1}),\,\, k\in\Z.
$$
Let $
\sigma=\frac{(1-\delta)e^{-\|A\|_{\infty}}}{\|u_0\|_{\infty}+1}$
and { $v(t,x;b)=v(t,x;u_0,b)$.}
By Lemma \ref{spreeding-speed-lem2}, we have that
\begin{equation}\label{spreading-lemma-2-eq1}
\liminf_{t\to\infty}\min_{|x|\leq ct}v(t,x;b)=1.
\end{equation}

Next, for given $s\in\R$, let   $\tilde{v}(t,x;s)=\sigma e^{A(t+s)}v(t,x;b)$.  By { the comparison principle} for parabolic equations, we have  that
$$
0<v(t,x;b)\leq \max\{\|u_0\|_{\infty},1\}<\|u_0\|_{\infty}+1, \quad \forall \ x\in\R, \ t\geq  0.
$$
Hence, {it follows from the definition of} $\sigma$ that
$$
0<\tilde{v}(t,x;s)\leq \sigma e^{\|A\|_\infty}(\|u_0\|_\infty+1)=1-\delta, \quad \forall\ x\in\R,\ \ t\geq 0,\\ s\in\R.
$$
Thus for any $s\in\R$,
\begin{align*}
\tilde{v}_t-\tilde{v}_{xx}-a(\theta_{s+t}\omega)\tilde{v}(1-\tilde{v})=&  \left( A'(s+t)+b(1-v)-a(\theta_{s+t}\omega)(1-\tilde{v})\right)\tilde{v}(t,x)\cr
\leq & \left( A'(s+t)+b(1-v)-\delta a(\theta_{s+t}\omega)\right)\tilde{v}(t,x)\cr
\leq & \left( A'(s+t)+b-\delta a(\theta_{s+t}\omega)\right)\tilde{v}(t,x)\cr
\leq& 0, \quad \ t\in(t_k,t_{k+1})\cap[0,\infty), \ \ x\in\R.
\end{align*}
Note  that
$$\tilde{v}(0,x;s)=\sigma e^{A(s)}u_0(x)\leq u_0(x),\quad \forall\ x\in\R.
$$
By {the comparison principle} for parabolic equations again,  we have that
$$
\sigma e^{-\|A\|_{\infty}}v(t,x,b)\leq \tilde{v}(t,x;s)\leq u(t,x;u_0,\theta_s\omega),\quad \forall\ x\in\R,\ s\in\R, \ t\geq 0.
$$
This combined with \eqref{spreading-lemma-2-eq1} yields that
$$
0<\sigma e^{-\|A\|_{\infty}}\leq \liminf_{t\to\infty}\inf_{s\in\R|x|\leq ct}u(t,x;u_0,\theta_s\omega), \quad \forall\ 0<c<2\sqrt{\underline{a}}.
$$
Hence \eqref{spreading-speed-lem3-eq1} holds. By \eqref{spreading-speed-lem3-eq1} and Lemma \ref{spreeding-speed-lem1},  we have $ c^{*}_{\inf}(\omega)\geq 2\sqrt{\underline{a}}, \quad \forall\ \omega\in\Omega_0.$
\end{proof}

Now, we prove Theorem \ref{spreading-speeds-tm}.

\begin{proof}[Proof of Theorem \ref{spreading-speeds-tm} ]

(i) We first prove
$ c^*_{\sup}(\omega)\leq 2\sqrt{\overline{a}}$ for all $\omega\in\Omega_0.$

 Suppose that ${\rm supp}(u_0)\subset(-R,R)$. For every $\mu>0$, let $C_\mu(t,s)=\int_s^{s+t}\frac{\mu^2+a(\theta_\tau\omega)}{\mu}d\tau$ and { $\phi^\mu(x)=\|u_0\|_{\infty}e^{-\mu ( x-R)} $ and $\tilde{\phi}^{\mu}_{\pm}(t,x;s)=\phi^\mu(\pm x-C_\mu(t,s))$} for every $x\in\R$ and $t\geq 0$. Then
$$
\partial_t\tilde{\phi}_{\pm}^\mu-\partial_{xx}\tilde{\phi}_{\pm}^\mu-a(\theta_{s+t}\omega)\tilde{\phi}_{\pm}^\mu(1-\tilde{\phi}_{\pm}^\mu)
=a(\theta_{s+t}\omega)\left( \tilde{\phi}_{\pm}^\mu\right)^2\geq 0, \ x\in\R,\ t>0.
$$
and
$$
u_0(x)\leq  \tilde{\phi}_{\pm}^\mu(0,x;s), \quad \forall x\in\R,\ \forall\ s\in\R.
$$
By {the comparison principle} for parabolic equations, we have
$$
u(t,x;u_0,\theta_s\omega)\leq \tilde{\phi}_{\pm}^\mu(t,x;s)=\|u_0\|_{\infty}e^{-\mu ({\pm}x-R  - C_\mu(t,s))},\quad \forall\ x,s\in\R, \forall t>0, \forall \mu>0.
$$
This implies that
$$
\limsup_{t\to\infty}\sup_{s\in\R,|x|\ge ct} u(t,x;u_0,\theta_s\omega)=0\quad \forall \,\, \mu>0,\,\, c>\frac{\mu^2+\bar a}{\mu}.
$$

For any $c>\bar c^*=2\sqrt{\bar a}=\inf_{\mu>0}\frac{\mu^2+\sqrt{\bar a}}{\mu}$, choose $\mu>0$ such that
$c>\frac{\mu^2+\sqrt{\bar a}}{\mu}>\bar c^*$. By the above arguments, we  have
$$
\limsup_{t\to\infty}\sup_{s\in\R,|x|\ge ct} u(t,x;u_0,\theta_s\omega)=0.
$$
Hence for any $\omega\in\Omega_0$,
 $c^*_{\sup}(\omega)\leq 2\sqrt{\overline{a}}$.

 Next, we prove that $c_{\sup}^*(\omega)\ge 2\sqrt{\overline{a}}$ for all $\omega\in\Omega_0.$
We prove this by contradiction.

Assume that there is $\omega\in\Omega_0$ such that $c_{\sup}^*(\omega)< 2\sqrt{\overline{a}}$. Then there is $0<\delta<1$  such that
 $$c_{\sup}^*(\omega)< 2\sqrt{\delta \overline{a}}.
 $$
 Note that
 $$
 \limsup_{t-s\to\infty}\frac{1}{t-s}\int_s^t a(\theta_\tau\omega)d\tau=\bar a>\delta \bar a.
 $$
 Then there is $0<\delta^{'}<1$ and $\{t_n\}$, $\{s_n\}$ such that $\lim_{n\to\infty} t_n-s_n=\infty$ and
 \begin{equation}
 \label{new-aux-eq2}
 \delta^{'}\frac{1}{t_n-s_n}\int_{s_n}^{t_n}a(\theta_\tau\omega)d\tau>\delta\bar a.
 \end{equation}

   Choose $c\in (c^*_{\sup}(\omega), 2\sqrt{\delta\overline{a}})$.  Set $L=\frac{2\pi}{\sqrt {4\bar a\delta-c^2}}$ and
 $$
 w^+(x)=e^{-\frac{c}{2}x}\sin \Big(\frac{\sqrt {4\bar a\delta-c^2}}{2}x\Big).
 $$
 Then $w^+(x)$ satisfies
 \begin{equation}
 \label{new-aux-eq3}
 \begin{cases}
 w^+_{xx}+cw^+_x+\bar a\delta w^+=0,\quad 0<x<L,\cr
 w^+(0)=w^+(L)=0,
 \end{cases}
 \end{equation}
 and $0<w^+(x)<1$ for $0<x<L$.

For any given $u_0\in X_c^+$, by the assumption that $c>c_{\rm sup}^*(\omega)$,
\begin{equation}
\label{new-aux-eq4}
\limsup_{t\to\infty}\sup_{s\in\R,|x|\ge ct}u(t,x;u_0,\theta_s\omega)=0.
\end{equation}
Hence there is $T>0$ such hat
\begin{equation*}
u(t,x;u_0,\theta_s\omega)<1-\delta^{'}\quad \forall\,\, t\ge T,\,\, |x|\ge ct,\,\, s\in\R,
\end{equation*}
and then
\begin{equation}
\label{new-aux-eq5}
u(t,x;u_0,\theta_s\omega) (1-u(t,x;u_0,\theta_s\omega))>\delta^{'} u(t,x;u_0,\theta_s\omega)\quad  \forall\,\, t\ge T,\,\, |x|\ge ct,\,\, s\in\R.
\end{equation}
Observe that { $u(t,x;u_0,\theta_s\omega)\geq u(t,x;\frac{u_0}{1+\|u_0\|_{\infty}},\theta_s\omega) $ and
$$
u_t(t,x;\frac{u_0}{1+\|u_0\|_{\infty}},\theta_{s}\omega)\ge u_{xx}(t,x;\frac{u_0}{1+\|u_0\|_{\infty}},\theta_{s}\omega),\quad x\in\R.
$$
This implies that
\begin{equation}
\label{new-aux-eq6}
\alpha:=\inf_{s\in\R,0\le x\le L} u(T,x+cT;u_0,\theta_s\omega)\geq \inf_{s\in\R,0\le x\le L} u(T,x+cT;\frac{u_0}{1+\|u_0\|_{\infty}},\theta_s\omega)>0.
\end{equation}
}

Let $v(t,x;s)=u(t,x+ct;u_0,\theta_{s-T}\omega)$. By \eqref{new-aux-eq5},
\begin{equation*}
v_t\ge v_{xx}+cv_x+\delta^{'} a(\theta_{s-T+t}\omega) v,\quad t\ge T,\,\, x\ge 0.
\end{equation*}
Let $w(t,x;s)=e^{-\int_s ^{s-T+t} \big(\delta^{'} a(\theta_\tau\omega)-\delta \bar a\big)d\tau} v(t,x;s)$. Then
$$
w_t\ge w_{xx}+cw_x+\delta\bar a w,\quad t\ge T,\,\, x\ge 0.
$$
By \eqref{new-aux-eq6} and {the comparison principle} for parabolic equations, we have
\begin{equation*}
v(t,x;s)\ge \alpha  e^{\int_s ^{s-T+t} \big(\delta^{'} a(\theta_\tau\omega)-\delta \bar a\big)d\tau}w^+(x),\quad t\ge T,\, \,  0\le x\le L.
\end{equation*}
This implies that for $0\le x\le L$,
\begin{align}
\label{new-aux-eq7}
u(t_n-s_n+T,x+c(t_n-s_n+T);u_0,\theta_{s_n-T}\omega)&\ge \alpha e^{\int_{s_n} ^{t_n} \big(\delta^{'} a(\theta_\tau\omega)-\delta \bar a\big)d\tau}w^+(x)\nonumber\\
&\ge \alpha  w^+(x)\quad {\rm (by \,\,\, \eqref{new-aux-eq2})}.
\end{align}
 By \eqref{new-aux-eq4},
$$
\limsup_{n\to\infty}\sup_{0\le x\le L} u(t_n-s_n+T,x+c(t_n-s_n+T);u_0,\theta_{s_n-T}\omega)=0,
$$
{which contradicts  \eqref{new-aux-eq7}.} Therefore, $c_{\rm sup}^*(\omega)\ge \bar c^*$ and then $c_{\rm sup}^*(\omega)=\bar c^*$ for any $\omega\in\Omega_0$. (i) thus follows.

(ii) By Lemma \ref{spreeding-speed-lem3}, $c_{\inf}^*(\omega)\ge \underline{c}^*$ for every $\omega\in\Omega_0$. It then suffices to prove that $c_{\rm inf}^*(\omega)\le \underline{c}^*$ for every $\omega\in\Omega_0$. We prove this by contradiction.

Assume that there is $\omega\in\Omega_0$ such that $c_{\rm inf}^*(\omega)>\underline{c}^*$. Choose $c\in (\underline{c}^*,c_{\rm inf}^*(\omega))$ and $\delta>1$ such that $c>2\sqrt {\delta \underline{a}}$. Then
$$
\liminf_{t-s\to\infty}\frac{1}{t-s}\int_s ^t a(\theta_\tau\omega)d\tau<\delta \underline{a}.
$$
Hence there are $\{t_n\}$ and $\{s_n\}$ such that $\lim_{n\to\infty}t_n-s_n=\infty$ and
$$
\frac{1}{t_n-s_n}\int_{s_n}^{t_n}a(\theta_\tau)d\tau<\delta\underline{a}\quad \forall\,\, n\ge 1.
$$

Let $\underline{\mu}=\sqrt{\delta\underline{a}}$. Then
\begin{equation}
\label{new-aux-eq0}
2\sqrt{\delta \underline{a}}=\frac{\delta \underline{a}+\underline{\mu}^2}{\underline{\mu}}<c.
\end{equation}
Choose $u_0\in X_c^+$ such that
$$
0\le u_0(x)<1,\quad u_0(x)\le e^{-\underline{\mu}x}\|u_0\|_\infty\quad \forall\,\, x\in\R.
$$
By the assumption that $c<c_{\rm inf}^*(\omega)$, there is
$T>0$ such that for any $t\ge T$ and $s\in\R$,
$$
\inf_{|x|\le ct}u(t,x;u_0,\theta_s\omega)\ge \|u_0\|_\infty.
$$
This implies that for any $n\ge 1$ with $t_n-s_n\ge T$,
\begin{equation}
\label{new-aux-eq1}
\inf_{|x|\le c(t_n-s_n)}u(t_n-s_n,x;u_0,\theta_{s_n}\omega)\ge \|u_0\|_\infty.
\end{equation}

Observe that $u(t,x;u_0,\theta_{s_n}\omega)$ satisfies
\begin{align*}
u_t=u_{xx}+a(\theta_{s_n+t}\omega) u(1-u)\le u_{xx}+a(\theta_{s_n+t}\omega) u.
\end{align*}
It then follows from {the comparison principle} for parabolic equations that
\begin{align*}
u(t,x;u_0,\theta_{s_n}\omega)\le e^{-\underline{\mu} \Big( x-\frac{1}{\underline{\mu}}\int_{s_n}^{s_n+t} (a(\theta_\tau\omega)+\underline{\mu}^2)d\tau\Big)}\|u_0\|_\infty
\end{align*}
and then for $x=c(t_n-s_n)$, we have
\begin{align*}
u(t_n-s_n,x;u_0,\theta_{s_n}\omega)&\le e^{-\underline{\mu} \Big( x-\frac{1}{\underline{\mu}}\int_{s_n}^{t_n} (a(\theta_\tau\omega)+\underline{\mu}^2)d\tau\Big)}\|u_0\|_\infty\\
&\le e^{-\underline{\mu} \Big(x-\frac{1}{\underline{\mu}}(\delta\underline{a}+\underline{\mu}^2)(t_n-s_n)\Big)}\|u_0\|_\infty\\
&=e^{-\underline{\mu} \Big( c-\frac{1}{\underline{\mu}}(\delta\underline{a}+\underline{\mu}^2)\Big)(t_n-s_n)}\|u_0\|_\infty \\
&<\|u_0\|_\infty\qquad\qquad {\rm (by \eqref{new-aux-eq0})},
\end{align*}
which contradicts to  \eqref{new-aux-eq1}.
Therefore $c_{\inf}^*(\omega)\le \underline{c}^*$ for any $\omega\in\Omega_0$ and (ii)  follows.
\end{proof}

The following corollary follows directly from Lemma \ref{real-noise-lm2}  and Theorem \ref{spreading-speeds-tm}.

 \begin{coro}
\label{spreading-cor}
Assume {\bf (H3)}.
  Let $Y(\omega)$ be the random equilibrium solution of \eqref{real-noise-eq} given in \eqref{random-equilibrium-1}. Then
  for any $u_0\in X_c^+$,
 \begin{equation*}
\limsup_{t\to\infty}\sup_{s\in\R,|x|\le ct}|\frac{u(t,x;u_0,\theta_s\omega)}{Y(\theta_{t+s}\omega)}-1|=0, \quad \forall\ 0<c<2\sqrt{1+\underline{\xi}}
\end{equation*}
and
\begin{equation*}
\limsup_{t\to\infty}\sup_{s\in\R,|x|\ge ct} \frac{u(t,x;u_0,\theta_s\omega)}{Y(\theta_{t+s}\omega)}=0, \quad \forall\ c>2\sqrt {1+\bar \xi}
\end{equation*}
for a.e. $\omega\in\Omega$. where $u(t,x;u_0,\theta_s\omega)$ is the solution of \eqref{real-noise-eq}
with $\omega$ being replaced by $\theta_s\omega$ and $u(0,x;u_0,\theta_s\omega)=u_0(x)$.
\end{coro}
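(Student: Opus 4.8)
The plan is to deduce this corollary directly from Theorem~\ref{spreading-speeds-tm} by means of the substitution that turns \eqref{real-noise-eq} into an equation of the form \eqref{Main-eq}. First I would record that $a(\omega):=Y(\omega)$ satisfies \textbf{(H1)}: by Lemma~\ref{real-noise-lm2}(1) one has $0<\inf_{t\in\R}Y(\theta_t\omega)\le\sup_{t\in\R}Y(\theta_t\omega)<\infty$ for a.e.\ $\omega$, by Lemma~\ref{real-noise-lm2}(4) the least and greatest means of $Y(\cdot)$ are $\underline a=1+\underline\xi>0$ and $\bar a=1+\overline\xi<\infty$ for a.e.\ $\omega$, and $a^\omega(t)=Y(\theta_t\omega)$ is $C^1$ in $t$ (it solves \eqref{real-noise-ode}) hence locally H\"older, so \eqref{real-noise-eq1} is genuinely an instance of \eqref{Main-eq}. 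Thus for this $a$ we have $\underline c^*=2\sqrt{1+\underline\xi}$ and $\bar c^*=2\sqrt{1+\overline\xi}$, and Theorem~\ref{spreading-speeds-tm} yields, for every $\omega$ in the full-measure set $\Omega_0$ attached to $a=Y$ (via Lemma~\ref{time-avg-vs-space-avg-lemma}), that $c_{\sup}^*(\omega)=2\sqrt{1+\overline\xi}$ and $c_{\inf}^*(\omega)=2\sqrt{1+\underline\xi}$ for equation \eqref{real-noise-eq1}.

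Next I would make the change of variables precise. Let $v(t,x;v_0,\theta_s\omega)$ denote the solution of \eqref{real-noise-eq1} with $\omega$ replaced by $\theta_s\omega$ and initial datum $v_0$. Using $\dot Y(\theta_t\omega)=Y(\theta_t\omega)\bigl(1+\xi(\theta_t\omega)-Y(\theta_t\omega)\bigr)$ (Theorem~\ref{real-noise-tm1}), a direct computation shows $\tilde u:=u/Y(\theta_{t+s}\omega)$ solves \eqref{real-noise-eq1} with $\omega$ replaced by $\theta_s\omega$ and initial datum $u_0/Y(\theta_s\omega)$, so
\[
\frac{u(t,x;u_0,\theta_s\omega)}{Y(\theta_{t+s}\omega)}=v\!\left(t,x;\tfrac{u_0}{Y(\theta_s\omega)},\theta_s\omega\right).
\]
The only nuisance is that the rescaled datum $u_0/Y(\theta_s\omega)$ depends on $s$, so the supremum over $s$ in the claimed limits is not literally a supremum along the $\{\theta_s\omega\}$–orbit of a single initial function. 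I would remove this by a monotone squeezing: setting $m:=\inf_{t\in\R}Y(\theta_t\omega)$ and $M:=\sup_{t\in\R}Y(\theta_t\omega)$ (positive and finite by Lemma~\ref{real-noise-lm2}(1)), we have $u_0/M\le u_0/Y(\theta_s\omega)\le u_0/m$ with $u_0/M,\ u_0/m\in X_c^+$, whence by the comparison principle
\[
v\!\left(t,x;\tfrac{u_0}{M},\theta_s\omega\right)\le v\!\left(t,x;\tfrac{u_0}{Y(\theta_s\omega)},\theta_s\omega\right)\le v\!\left(t,x;\tfrac{u_0}{m},\theta_s\omega\right)
\]
for all $t>0$, $x\in\R$, $s\in\R$.

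Finally I would conclude. For $0<c<2\sqrt{1+\underline\xi}=c_{\inf}^*(\omega)$, Theorem~\ref{spreading-speeds-tm}(ii) (equivalently Lemmas~\ref{spreeding-speed-lem3} and~\ref{spreeding-speed-lem1}, which give $c\in C_{\inf}(\omega)$ for every such $c$) applied to both $u_0/M$ and $u_0/m$ gives $\limsup_{t\to\infty}\sup_{s\in\R,|x|\le ct}|v(t,x;u_0/M,\theta_s\omega)-1|=0$ and the same with $M$ replaced by $m$; the squeezing bounds then force $\limsup_{t\to\infty}\sup_{s\in\R,|x|\le ct}\bigl|v(t,x;u_0/Y(\theta_s\omega),\theta_s\omega)-1\bigr|=0$, which is the first assertion. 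For $c>2\sqrt{1+\overline\xi}=c_{\sup}^*(\omega)$, Theorem~\ref{spreading-speeds-tm}(i) applied to $u_0/m\in X_c^+$ gives $\limsup_{t\to\infty}\sup_{s\in\R,|x|\ge ct}v(t,x;u_0/m,\theta_s\omega)=0$, and since $0\le v(t,x;u_0/Y(\theta_s\omega),\theta_s\omega)\le v(t,x;u_0/m,\theta_s\omega)$ the second assertion follows. All of this holds on the intersection of the full-measure sets furnished by Lemma~\ref{real-noise-lm2} and by Theorem~\ref{spreading-speeds-tm} applied with $a=Y$, i.e.\ for a.e.\ $\omega\in\Omega$. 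The only step requiring genuine care is the $s$-dependence of the rescaled initial datum, handled by the monotone comparison above; beyond that, the proof is a routine invocation of the already-established theorems.
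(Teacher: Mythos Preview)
Your proof is correct and follows the same route the paper indicates: the paper's own proof is the single line ``follows directly from Lemma~\ref{real-noise-lm2} and Theorem~\ref{spreading-speeds-tm},'' and you carry out precisely that deduction. Your explicit handling of the $s$-dependence of the rescaled initial datum via the sandwich $u_0/M\le u_0/Y(\theta_s\omega)\le u_0/m$ (using Lemma~\ref{real-noise-lm2}(1)) is a clean way to fill in the one detail the paper leaves implicit.
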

}

Finally, we prove   Theorem \ref{spreading-speeds-tm-0}.

\begin{proof}[Proof of Theorem \ref{spreading-speeds-tm-0}]

(i) It is clear that $\tilde c_{\rm sup}^*(\omega)\ge c_{\rm sup}^*(\omega)=\bar c^*$ for any $\omega\in\Omega_0$. It then suffices to prove that $\tilde c_{\rm sup}^*(\omega)\le \bar c^*$ for any $\omega\in\Omega_0$.

To this end, fix $\omega\in\Omega_0$.  For every $\mu>0$, let $C_\mu(t,s)=\int_s^{s+t}\frac{\mu^2+a(\theta_\tau\omega)}{\mu}d\tau$ and  $\tilde{\phi}^{\mu}_{+}(t,x;s)=e^{-\mu(x-C_\mu(t,s))}$ for every $x\in\R$ and $t\geq 0$.
Note that for any $u_0\in\tilde X_c^+$, there is $M_0>0$ such that
$$
u_0(x)\leq  M_0\tilde{\phi}_{+}^\mu(0,x;s), \quad \forall x\in\R,\ \forall\ s\in\R.
$$
Note also that
$$
\partial_tM_0\tilde{\phi}_{+}^\mu-\partial_{xx}M_0\tilde{\phi}_{+}^\mu-a(\theta_{s+t}\omega)M_0\tilde{\phi}_{+}^\mu(1-M_0\tilde{\phi}_{+}^\mu)
=a(\theta_{s+t}\omega)M_0^2 \big( \tilde{\phi}_{+}^\mu\big)^2\geq 0, \ x\in\R,\ t>0.
$$
Hence, by {the comparison principle} for parabolic equations, we have that
$$
u(t,x;u_0,\theta_s\omega)\leq M_0\tilde{\phi}_{+}^\mu(t,x;s)=M_0e^{-\mu (x- C_\mu(t,s))},\quad \forall\ x,s\in\R, \forall t>0, \forall \mu>0.
$$
This implies that
$$
\limsup_{t\to\infty}\sup_{s\in\R,x\ge ct} u(t,x;u_0,\theta_s\omega)=0\quad \forall \,\, \mu>0,\,\, c>\frac{\mu^2+\bar a}{\mu}.
$$

For any $c>\bar c^*=2\sqrt{\bar a}=\inf_{\mu>0}\frac{\mu^2+\sqrt{\bar a}}{\mu}$, choose $\mu>0$ such that
$c>\frac{\mu^2+\sqrt{\bar a}}{\mu}>\bar c^*$. By the above arguments, we  have
$$
 \limsup_{t\to\infty}\sup_{s\in\R,x\ge ct} u(t,x;u_0,\theta_s\omega)=0.
$$
Hence for any $\omega\in\Omega_0$, we have
 $\tilde c^*_{\sup}(\omega)\leq 2\sqrt{\overline{a}}$. (i) thus follows.

\medskip

(ii) First, it is clear that $\tilde c_{\inf}^*(\omega)\ge c_{\rm inf}^*(\omega)=\underline{c}^*$. It then suffices to prove that
$\tilde c_{\rm inf}^*(\omega)\le \underline{c}^*$ for any $\omega\in\Omega_0$. This can be proved by the similar arguments as those in
Theorem \ref{spreading-speeds-tm} (ii).
\end{proof}

{ The following corollary follows directly from Lemma \ref{real-noise-lm2}  and Theorem \ref{spreading-speeds-tm-0}.

 \begin{coro}
\label{spreading-cor2}
Assume {\bf (H3)}.
  Let $Y(\omega)$ be the random equilibrium solution of \eqref{real-noise-eq} given in \eqref{random-equilibrium-1}. Then
  for any $u_0\in \tilde{X}_c^+$,
 \begin{equation*}
\limsup_{t\to\infty}\sup_{s\in\R,x\le ct}|\frac{u(t,x;u_0,\theta_s\omega)}{Y(\theta_{t+s}\omega)}-1|=0, \quad \forall\ 0<c<2\sqrt{1+\underline{\xi}}
\end{equation*}
and
\begin{equation*}
\limsup_{t\to\infty}\sup_{s\in\R,x\ge ct} \frac{u(t,x;u_0,\theta_s\omega)}{Y(\theta_{t+s}\omega)}=0, \quad \forall\ c>2\sqrt {1+\bar \xi}
\end{equation*}
for a.e. $\omega\in\Omega$. where $u(t,x;u_0,\theta_s\omega)$ is the solution of \eqref{real-noise-eq}
with $\omega$ being replaced by $\theta_s\omega$ and $u(0,x;u_0,\theta_s\omega)=u_0(x)$.
\end{coro}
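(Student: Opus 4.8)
\emph{Proof proposal.} The plan is to reduce Corollary~\ref{spreading-cor2} to Theorem~\ref{spreading-speeds-tm-0} via the same change of variables used to pass from \eqref{real-noise-eq} to \eqref{real-noise-eq1}. Fix $\omega$, $s\in\R$ and $u_0\in\tilde X_c^+$, let $u(t,x)=u(t,x;u_0,\theta_s\omega)$ denote the solution of \eqref{real-noise-eq} with environment $\theta_s\omega$, and set
$$
v(t,x)=\frac{u(t,x)}{Y(\theta_{t+s}\omega)}.
$$
By Theorem~\ref{real-noise-tm1} and the cocycle identity $\theta_t\theta_s=\theta_{t+s}$, the denominator $Y(\theta_{t+s}\omega)=u(t;Y(\theta_s\omega),\theta_s\omega)$ solves \eqref{real-noise-ode} along $\theta_{t+s}\omega$, and a direct computation then shows that $v$ solves \eqref{Main-eq} with $a(\cdot)=Y(\cdot)$ and environment $\theta_s\omega$, with $v(0,\cdot)=u_0/Y(\theta_s\omega)$; that is, $v(t,x)=u(t,x;u_0/Y(\theta_s\omega),\theta_s\omega)$ where now $u$ refers to solutions of \eqref{real-noise-eq1}. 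Hence the two displays to be proved are precisely statements about the spreading of solutions of \eqref{real-noise-eq1}, which is of the form \eqref{Main-eq} with $a=Y$.

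First I would verify \textbf{(H1)} for $a(\cdot)=Y(\cdot)$: by Lemma~\ref{real-noise-lm2}(4), $\hat a_{\inf}(\omega)=\hat Y_{\inf}(\omega)=1+\underline{\xi}$ and $\hat a_{\sup}(\omega)=\hat Y_{\sup}(\omega)=1+\overline{\xi}$ for a.e.\ $\omega$, and \textbf{(H3)} gives $0<1+\underline{\xi}\le 1+\overline{\xi}<\infty$; thus \textbf{(H1)} holds with $\underline a=1+\underline{\xi}$, $\overline a=1+\overline{\xi}$, so that $\underline c^*=2\sqrt{1+\underline{\xi}}$ and $\overline c^*=2\sqrt{1+\overline{\xi}}$. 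Let $\Omega_0$ be the full-measure invariant set from Lemma~\ref{time-avg-vs-space-avg-lemma} for $a=Y$, intersected with the full-measure set on which the conclusions of Lemma~\ref{real-noise-lm2} hold; it is on this set that the final a.e.\ statement will be established.

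The only point requiring care is that the rescaled initial datum $u_0/Y(\theta_s\omega)$ depends on $s$, whereas $\tilde C_{\sup}(\omega)$, $\tilde C_{\inf}(\omega)$ are defined with one $s$-independent initial function. To handle this I would use Lemma~\ref{real-noise-lm2}(1), which gives $0<m_\omega:=\inf_{s\in\R}Y(\theta_s\omega)\le M_\omega:=\sup_{s\in\R}Y(\theta_s\omega)<\infty$: the two functions $u_0/M_\omega$ and $u_0/m_\omega$ lie in $\tilde X_c^+$, do not depend on $s$, and satisfy $u_0/M_\omega\le u_0/Y(\theta_s\omega)\le u_0/m_\omega$, whence by the comparison principle
$$
u(t,x;u_0/M_\omega,\theta_s\omega)\le v(t,x)\le u(t,x;u_0/m_\omega,\theta_s\omega),\qquad t\ge0,\ x\in\R,\ s\in\R,
$$
with these now solutions of \eqref{real-noise-eq1}. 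For $\omega\in\Omega_0$ and $0<c<2\sqrt{1+\underline{\xi}}=\underline c^*$, Theorem~\ref{spreading-speeds-tm-0}(ii) applied to each of $u_0/M_\omega$ and $u_0/m_\omega$ gives $\limsup_{t\to\infty}\sup_{s\in\R,\,x\le ct}|u(t,x;u_0/M_\omega,\theta_s\omega)-1|=0$ and the same with $m_\omega$, so the sandwich yields the first display. For $c>2\sqrt{1+\overline{\xi}}=\overline c^*$, Theorem~\ref{spreading-speeds-tm-0}(i) applied to $u_0/m_\omega$ gives $\limsup_{t\to\infty}\sup_{s\in\R,\,x\ge ct}u(t,x;u_0/m_\omega,\theta_s\omega)=0$, and since $0\le v\le u(\cdot,\cdot;u_0/m_\omega,\theta_s\omega)$ the second display follows.

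I expect the main obstacle to be purely the shift bookkeeping: checking rigorously that $v$ solves \eqref{real-noise-eq1} with the correct environment $\theta_s\omega$, which requires Theorem~\ref{real-noise-tm1} together with $\theta_t\theta_s=\theta_{t+s}$, and confirming that the $s$-dependence of the normalizing constant is neutralized by the two-sided bound of Lemma~\ref{real-noise-lm2}(1). Beyond that, the argument is a direct transcription of Theorem~\ref{spreading-speeds-tm-0}, exactly parallel to the deduction of Corollary~\ref{spreading-cor} from Theorem~\ref{spreading-speeds-tm}.
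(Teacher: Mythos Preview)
Your proposal is correct and follows exactly the route indicated by the paper, which simply states that the corollary ``follows directly from Lemma~\ref{real-noise-lm2} and Theorem~\ref{spreading-speeds-tm-0}.'' You have in fact supplied the one nontrivial detail the paper suppresses: the rescaled initial datum $u_0/Y(\theta_s\omega)$ depends on $s$, and your sandwich via Lemma~\ref{real-noise-lm2}(1) between the $s$-independent data $u_0/M_\omega$ and $u_0/m_\omega$ is the right way to close this gap.
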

}

\section{Take-over property}

In this section, we investigate the take-over property of \eqref{Main-eq} and prove Theorem \ref{spreading-speeds-tm-1}. We first prove some lemmas.

 Recall that
$$
u_0^*(x)=\begin{cases} 1,\quad x\le 0\cr
0,\quad x>0
\end{cases}
$$
and  that, for $t>0$,  $x(t,\omega)\in\R$ is such that
$$
u(t,x(t,\omega);u_0^*,\omega)=\frac{1}{2}.
$$
 Note that, by Lemma  \ref{measurable-inverse-process}, for each $t>0$, $x(t,\omega)$ is measurable in $\omega$. Note also that for  $\omega\in\Omega$, the mapping $(t,t_0)\ni (0,\infty)\times \R \to u(t,\cdot;u_0^*,\theta_{t_0}\omega)\in C_{\rm unif}^b(\R)$ is continuous and hence $x(t,\theta_{t_0}\omega)$ is continuous in $(t,t_0)\in (0,\infty)\times \R$.

 Suppose that  {\bf (H1)} holds.
Let $\omega \in\Omega_0$, and $0<\mu<\tilde{\mu}<\min\{2\mu,\underline{\mu}^*\}$ be given,  where $\underline{\mu}^*=\sqrt {\underline{a}}$.
 Let $b(t)=a(\theta_t\omega)$. Put
$$
c(t;\omega,\mu)=c(t;b,\mu),\quad C(t;\omega,\mu)=C(t;b,\mu),
$$
and
$$
A_\omega(t)=B_b(t),\quad d_\omega=d_b,
$$
where  $c(t;b,\mu)$ and $C(t;b,\mu)$ are as in \eqref{b-eq2}, and
 $B_b$ and  $d_b$  are as in  Lemma \ref{lm0-2}. Note that we can choose $d_{\theta_{t_0}\omega}=d_\omega$ and $A_{\theta_{t_0}\omega}(t)=A_\omega(t+t_0)$ for any $t_0\in\R$.  Let
\begin{equation}
\label{x-omega-eq}
x_\omega(t)=C(t;\omega,\mu)+\frac{\ln d_{\omega}+\ln \tilde \mu-\ln \mu}{\tilde \mu-\mu}+ \frac{A_\omega(t)}{\mu}.
\end{equation}
Note that for any given $t\in\R$,
$$
\phi^{\mu,d_\omega,A_\omega}(t,x_\omega(t))=\sup_{x\in\R} \phi^{\mu,d_\omega,A_\omega}(t,x)=e^{-\mu\big(\frac{\ln d_\omega}{\tilde \mu-\mu}+\frac{A_\omega(t)}{\mu}\big)}e^{-\mu \frac{\ln \tilde \mu-\ln \mu}{\tilde\mu-\mu}}\big(1-\frac{\mu}{\tilde \mu}\big).
$$
 We introduce the following function
\begin{equation}\label{Lower -sol}\phi_{-}^{\mu}(t,x;\theta_{t_0}\omega)=\begin{cases}
{ \phi^{\mu,d_{\omega},A_{\theta_{t_0}\omega}}(t,x)},\quad  \text{if}\ x\geq x_{\theta_{t_0}\omega}(t), \\
\phi^{\mu,d_{\omega},A_{\theta_{t_0}\omega}}(t,x_{\theta_{t_0}\omega}(t)),\quad \ \text{if}\ x\leq x_{\theta_{t_0}\omega}(t).
\end{cases}
\end{equation}
It is clear  from Lemma \ref{lm0-2}, and {the comparison principle} for parabolic equations,  that
\begin{equation}
\label{eqq1}
0<\phi_{-}^{\mu}(t,x;\theta_{t_0}\omega)< u(t,x;\phi_{+}^{\mu}(\cdot,x;\theta_{t_0}\omega),\theta_{t_0}\omega)\leq 1, \forall\ t\in\R,\,\, x\in\R,\,\, t_0\in\R.
\end{equation}

\begin{lem}
\label{lm00}
For every $\omega\in\Omega_0$,
$\lim_{x\to -\infty} u(t,x+C(t,\theta_{t_0}\omega,\mu); \phi_+^\mu(0,\cdot;\theta_{t_0}\omega),\theta_{t_0}\omega)=1$ uniformly in $t>0$ and $t_0\in\R$, and
$\lim_{x\to \infty} u(t,x+C(t,\theta_{t_0}\omega,\mu);\phi_+^\mu(0,\cdot;\theta_{t_0}\omega),\theta_{t_0}\omega)=0$ uniformly in $t>0$ and $t_0\in\Omega$.
\end{lem}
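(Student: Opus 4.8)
The plan is to prove the two one-sided limits separately. The $x\to+\infty$ assertion is immediate from an upper barrier: applying Lemma~\ref{lm0-1} with $b(t)=a(\theta_{t+t_0}\omega)$, so that $C(t;b,\mu)=C(t;\theta_{t_0}\omega,\mu)$, gives $u(t,x;\phi_+^\mu(0,\cdot;\theta_{t_0}\omega),\theta_{t_0}\omega)\le\min\{1,e^{-\mu(x-C(t;\theta_{t_0}\omega,\mu))}\}$; substituting $x\mapsto x+C(t;\theta_{t_0}\omega,\mu)$ one gets $u(t,x+C(t;\theta_{t_0}\omega,\mu);\dots)\le\min\{1,e^{-\mu x}\}$, whose right-hand side depends on neither $t$ nor $t_0$ and tends to $0$ as $x\to\infty$.

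For the $x\to-\infty$ limit I would first extract a uniform positive lower bound from the lower barrier. By \eqref{x-omega-eq} and $A_{\theta_{t_0}\omega}(\cdot)=A_\omega(\cdot+t_0)\in L^\infty$, there is $K>0$ with $|x_{\theta_{t_0}\omega}(t)-C(t;\theta_{t_0}\omega,\mu)|\le K$ for all $t,t_0$, and from the displayed value of $\phi^{\mu,d_\omega,A_{\theta_{t_0}\omega}}$ at $x_{\theta_{t_0}\omega}(t)$ there is $\alpha_0\in(0,1)$ bounding that value below (with $\alpha_0<1$ because $d_\omega$ was chosen large). Then \eqref{Lower -sol} and \eqref{eqq1} give, writing $v(t,x)=u(t,x+C(t;\theta_{t_0}\omega,\mu);\phi_+^\mu(0,\cdot;\theta_{t_0}\omega),\theta_{t_0}\omega)$, that $v(t,x)\ge\alpha_0$ for $x\le-K$, $t>0$, $t_0\in\R$. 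Observe that $v$ solves the moving-coordinate equation \eqref{moving coordinate -eq1} for $\theta_{t_0}\omega$ with drift $c(t;\theta_{t_0}\omega)=(\mu^2+a(\theta_{t+t_0}\omega))/\mu$, whose least and greatest means are $(\mu^2+\underline a)/\mu>0$ and $(\mu^2+\bar a)/\mu<\infty$, and that $v(t,\cdot)$ is nonincreasing because $\phi_+^\mu(0,\cdot)=\min\{1,e^{-\mu x}\}$ is.

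Next I would bootstrap $\alpha_0$ up to $1$ on $\{t\ge T_\epsilon\}$. Fix $\epsilon>0$. The spatially constant solution $\bar v(t;t_0)$ of \eqref{moving coordinate -eq1} with value $\alpha_0$ solves the ODE $\dot{\bar v}=a(\theta_{t+t_0}\omega)\bar v(1-\bar v)$; Theorem~\ref{stability of const equi solu thm} applied to $\theta_{t_0}\omega$, together with $\hat a_{\inf}(\omega)=\underline a$ for $\omega\in\Omega_0$ (which forces $\int_{t_0}^{t_0+T}a(\theta_s\omega)\,ds\ge(\underline a-\delta)T$ for all $t_0$ once $T$ is large), produces $T_\epsilon$, independent of $t_0$, with $\bar v(T_\epsilon;t_0)\ge 1-\epsilon/2$. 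Taking $\tilde u_0$ uniformly continuous and nonincreasing with $\tilde u_0\equiv\alpha_0$ on $(-\infty,-1]$, $\tilde u_0\equiv0$ on $[0,\infty)$ and $0\le\tilde u_0\le\alpha_0$, so that $\tilde u_0(\cdot-n)\to\alpha_0$ locally uniformly, Lemma~\ref{convergence-locally-lm} --- whose hypothesis holds since $\int_0^{T_\epsilon}c(\tau+t_0;\omega)\,d\tau=\mu T_\epsilon+\mu^{-1}\int_{t_0}^{t_0+T_\epsilon}a(\theta_s\omega)\,ds$ is bounded above and below uniformly in $t_0$ by $\bar a<\infty$ --- gives $N_\epsilon$ with $v(T_\epsilon,-N_\epsilon;\tilde u_0,\theta_{t_0}\omega)\ge1-\epsilon$ for all $t_0$, hence $v(T_\epsilon,y;\tilde u_0,\theta_{t_0}\omega)\ge1-\epsilon$ for all $y\le-N_\epsilon$ and all $t_0$ by monotonicity in $x$. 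Since $v(t-T_\epsilon,\cdot)\ge\alpha_0$ on $(-\infty,-K]$ and $\tilde u_0$ vanishes on $[0,\infty)$, one has $v(t-T_\epsilon,\cdot)\ge\tilde u_0(\cdot+K+1)$ on $\R$; comparing over $[t-T_\epsilon,t]$, on which \eqref{moving coordinate -eq1} is the equation for $\theta_{t-T_\epsilon+t_0}\omega$, and using translation invariance in $x$ yields $v(t,x)\ge v(T_\epsilon,x+K+1;\tilde u_0,\theta_{t-T_\epsilon+t_0}\omega)\ge 1-\epsilon$ for $x\le-N_\epsilon-K-1$, uniformly in $t\ge T_\epsilon$ and $t_0$.

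Finally I would treat $0<t\le T_\epsilon$ directly. Here $w=1-u(t,\cdot;\phi_+^\mu(0,\cdot;\theta_{t_0}\omega),\theta_{t_0}\omega)\ge0$ satisfies $w_t\le w_{xx}$ with $w(0,\cdot)\le\mathbf{1}_{(0,\infty)}$, hence $w(t,x)\le\pi^{-1/2}\int_{-x/\sqrt{4t}}^{\infty}e^{-z^2}\,dz$; since $C(t;\theta_{t_0}\omega,\mu)$ is bounded on $[0,T_\epsilon]$ uniformly in $t_0$ (again by $\bar a<\infty$), for $x$ sufficiently negative $v(t,x)\ge1-\epsilon$ on $0<t\le T_\epsilon$ for every $t_0$. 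Combining the two ranges yields, for each $\epsilon>0$, an $L_\epsilon$ with $v(t,x)\ge1-\epsilon$ for all $t>0$, $x\le-L_\epsilon$, $t_0\in\R$, which is the assertion. The hard part will be the bootstrap step --- securing both the stabilization rate (through the least-mean hypothesis) and the localization of Lemma~\ref{convergence-locally-lm} uniformly in $t_0$, and correctly tracking the $\theta$-shift when invoking the comparison principle for the non-autonomous moving-coordinate equation; a minor but necessary point is that the short-time range has to be handled separately, since the lower barrier delivers only $\alpha_0$, not $1$.
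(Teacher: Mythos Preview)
Your proof is correct and follows essentially the same route as the paper. For $x\to+\infty$ both you and the paper use the upper barrier from Lemma~\ref{lm0-1}; for $x\to-\infty$ both extract a uniform positive lower bound from the sub-solution \eqref{Lower -sol}--\eqref{eqq1} and then bootstrap to $1$ via the mechanism of Theorem~\ref{uniform-tail-tm} (constant subsolution plus Lemma~\ref{convergence-locally-lm}), which is exactly what you unpack. The one place you are more careful than the paper is the short-time regime $0<t\le T_\epsilon$: the paper simply writes ``by the arguments of Theorem~\ref{uniform-tail-tm}'', but that theorem is stated for entire solutions, so its proof literally only covers $t\ge T_\epsilon$ here; your heat-kernel estimate on $w=1-u$ fills that gap cleanly and is a welcome addition.
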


\begin{proof} First, it follows from Lemma \ref{lm0-1} that
$$
\sup_{t>0,t_0\in\R}u(t,x+C(t,\theta_{t_0}\omega,\mu); \phi_+^\mu(0,\cdot;\theta_{t_0}\omega),\theta_{t_0}\omega)\leq e^{-\mu x}\to 0 \ \text{as}\ x\to\infty.
$$

Second, define $v(t,x;\theta_{t_0}\omega)=u(t,x+C(t,\theta_{t_0}\omega,\mu); \phi_+^\mu(0,\cdot;\theta_{t_0}\omega),\theta_{t_0}\omega)$ and
$$ x^*=\frac{\ln d_\omega+\ln \tilde{\mu}-\ln{\mu}}{\tilde{\mu}-\mu}-\frac{\|A_{\omega}\|_\infty}{\mu}.$$
 It follows from \eqref{x-omega-eq} and \eqref{eqq1} that
$$
0<(1-\frac{\mu}{\tilde{\mu}})e^{-\mu\left( \frac{\ln d_\omega +\ln\tilde{\mu}-\ln\mu}{\tilde{\mu}-\mu}+\frac{\|A_\omega\|_\infty}{\mu}\right)}\leq \inf_{t>0,t_0\in\R}v(t,x^*; \phi_+^\mu(0,\cdot;\theta_{t_0}\omega),\theta_{t_0}\omega).
$$
Moreover, $x\mapsto v(t,x;\theta_{t_0}\omega)$ is decreasing and
$$
v_t=v_{xx} +c(t;\theta_{t_0}\omega,\mu)v_x +a(\theta_t\theta_{t_0}\omega)v(1-v),
$$
where $c(t;\omega,\mu)=C'(t;\omega,\mu)$. By the arguments of Theorem \ref{uniform-tail-tm}, we have that
$$
v(t,x;\theta_{t_0}\omega)\to 1 \ \text{as}\ x\to -\infty
$$
uniformly in $t>0,t_0\in\R$.
\end{proof}

\begin{lem}
\label{lm0-3-1}
For each $t>0$,
there is $m(t)\leq n(t)\in\R$ such that
$$
m(t)\le x(t,\omega) \le n(t)\quad  \,\, \text{for a.e }\ \omega\in\Omega,
$$
and hence  $x(t,\omega)$ is  integrable in $\omega$.
\end{lem}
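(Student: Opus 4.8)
The plan is to trap $x(t,\omega)$ between an explicit deterministic lower bound and an upper bound obtained by comparison with the equation linearized (in $u$) about $0$, and then to read off integrability from {\bf (H1)}. Two facts will be used throughout: $0\le u(t,\cdot;u_0^*,\omega)\le 1$ for $t>0$ (comparison with the constant solutions $0$ and $1$), and $u(t,\cdot;u_0^*,\omega)$ is strictly decreasing in $x$ for $t>0$, so that $\{u=\tfrac12\}$ is the single point $x(t,\omega)$, exactly as in the setting of Lemma~\ref{measurable-inverse-process}.

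For the lower bound, the reaction term $a(\theta_t\omega)u(1-u)$ is nonnegative on $0\le u\le 1$, so $u(t,\cdot;u_0^*,\omega)$ is a supersolution of the heat equation with datum $u_0^*$; the comparison principle then gives $u(t,x;u_0^*,\omega)\ge (e^{t\Delta}u_0^*)(x)=\tfrac12\,\mathrm{erfc}\big(x/\sqrt{4t}\big)$ for $t>0$, which exceeds $\tfrac12$ for every $x<0$. By monotonicity this forces $x(t,\omega)\ge 0$ for all $\omega$, so I would take $m(t)=0$.

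For the upper bound, since $u(1-u)\le u$ on $0\le u\le 1$, the function $u(t,\cdot;u_0^*,\omega)$ is a subsolution of $v_t=v_{xx}+a(\theta_t\omega)v$ with datum $u_0^*$, and the comparison principle gives $u(t,x;u_0^*,\omega)\le e^{A(t,\omega)}(e^{t\Delta}u_0^*)(x)$, where $A(t,\omega):=\int_0^t a(\theta_\tau\omega)\,d\tau$. Using the bound $\mathrm{erfc}(z)\le e^{-z^2}$, i.e. $(e^{t\Delta}u_0^*)(x)\le\tfrac12 e^{-x^2/(4t)}$ for $x\ge 0$, the right-hand side is $\le\tfrac12$ once $x\ge 2\sqrt{t\,A(t,\omega)}$, whence $x(t,\omega)\le n(t,\omega):=2\sqrt{t\,A(t,\omega)}$ by monotonicity. (The $\mu$-optimized version of Lemma~\ref{lm0-1}, applied after the elementary inequality $u_0^*\le\phi_+^\mu(0,\cdot)$, gives the essentially equivalent bound $x(t,\omega)\le \mu t+\mu^{-1}(A(t,\omega)+\ln 2)$ for each $\mu\in(0,\sqrt{\underline{a}})$.)

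Finally, for integrability: {\bf (H1)} (via Lemma~\ref{time-avg-vs-space-avg-lemma}) gives $a(\cdot)\in L^1(\Omega,\mathcal{F},\PP)$, so by the measure-preserving property of $\theta$ and Tonelli, $\E[A(t,\cdot)]=t\hat a<\infty$; hence $A(t,\cdot)\in L^1(\Omega)$ and, by Jensen's inequality, $\E[n(t,\cdot)]=2\sqrt{t}\,\E[\sqrt{A(t,\cdot)}]\le 2t\sqrt{\hat a}<\infty$. Since $0=m(t)\le x(t,\omega)\le n(t,\omega)$ with $n(t,\cdot)\in L^1(\Omega)$ and $x(t,\cdot)$ measurable (Lemma~\ref{measurable-inverse-process}), the map $\omega\mapsto x(t,\omega)$ is integrable. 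I expect the main obstacle to be obtaining a genuinely deterministic constant $n(t)\in\R$ rather than the $\omega$-dependent $n(t,\omega)$ above: under {\bf (H1)} alone $\int_0^t a(\theta_\tau\omega)\,d\tau$ need not be essentially bounded in $\omega$, and a deterministic bound (namely $n(t)=2t\sqrt{\|a\|_{L^\infty(\Omega)}}$) is available only under the stronger hypothesis $a\in L^\infty(\Omega)$; for the integrability conclusion actually used later, the $\omega$-dependent bound suffices.
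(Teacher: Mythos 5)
Your lower bound and your integrability argument are correct: $m(t)=0$ is a clean deterministic lower bound (indeed more explicit than the paper's), and the chain $0\le x(t,\omega)\le 2\sqrt{t\,A(t,\omega)}$ together with $\E[A(t,\cdot)]=t\hat a$ (Tonelli plus measure preservation) does give $x(t,\cdot)\in L^1(\Omega)$, which is all that is used later. But the lemma's first assertion is the existence of a \emph{deterministic} $n(t)\in\R$ with $x(t,\omega)\le n(t)$ a.e., and your proof, as you yourself flag, stops short of that; moreover your diagnosis of the obstacle is wrong: a deterministic bound does not require $a\in L^\infty(\Omega)$. The paper's mechanism is ergodicity applied to an invariant function. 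Using Lemma \ref{convergence-locally-lm} with the shifted data $u_0^*(\cdot+n)\to 0$, uniformly in the time shift $t_0$, it shows that $n(t,\omega):=\sup_{t_0\in\R}x(t,\theta_{t_0}\omega)$ is finite for each $\omega$; since $n(t,\theta_\tau\omega)=n(t,\omega)$, ergodicity forces $n(t,\omega)$ to equal a finite constant $n(t)$ almost everywhere, and then $x(t,\omega)=x(t,\theta_0\omega)\le n(t)$ a.e. (the lower bound is obtained the same way with $\inf_{t_0}$).

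The same device repairs your own estimate with one extra soft step. Set $K(t,\omega):=\sup_{t_0\in\R}\int_{t_0}^{t_0+t}a(\theta_\tau\omega)\,d\tau$. This is $\theta$-invariant and measurable (the sup may be taken over $t_0\in\Q$), and it is finite for every $\omega\in\Omega_0$: if windows of length $t$ carried masses $M_k\to\infty$, then since $a>0$ the average over any interval of length $r\ge t$ containing such a window would be at least $M_k/r$, so $\sup_{\tau-s\ge r}\frac{1}{\tau-s}\int_s^\tau a(\theta_\sigma\omega)\,d\sigma=\infty$ for every $r\ge t$, contradicting $\hat a_{\sup}(\omega)<\infty$ from {\bf (H1)}. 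By ergodicity $K(t,\omega)=K(t)<\infty$ a.e., and therefore $x(t,\omega)\le 2\sqrt{t\,A(t,\omega)}\le 2\sqrt{t\,K(t)}=:n(t)$ a.e. With this addition your comparison-with-the-linearization route proves the full statement; as written, it proves only the integrability corollary and not the deterministic two-sided bound that the lemma actually asserts.
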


\begin{proof} First, let
$$
u_{0n}^*(x)=u_0^*(x-n), \quad x\in\R,\,\,  n\in\mathbb{N}.
$$
We have that $0\leq u_{0n}^*(x)\leq 1$ and $u_{0n}^*(x)\to 1$ as $n\to\infty$. By Lemma \ref{convergence-locally-lm}, for every
 $\omega\in\Omega_0$ and  $t>0$
$$
u(t,x;u_{0n}^*,\theta_{t_0}\omega)\to 1\quad \text{as}\ n\to\infty
$$
uniformly in $t_0\in\R$ and locally uniformly in $x\in\R$. Observe that
$$
u(t,x;u_{0n}^*,\theta_{t_0}\omega)=u(t,x-n;u_{0}^*,\theta_{t_0}\omega)
$$
and the mapping $\R\ni x\mapsto u(t,x;u_{0}^*,\theta_{t_0}\omega) $ is decreasing.
Thus, there is $N(t,\omega)\in\mathbb{N}$ such that
$$
u(t,x;u_{0}^*,\theta_{t_0}\omega)\geq \frac{3}{4}, \quad \forall x\leq -N(t,\omega), \quad \forall \ t_0\in\R.
$$
This implies that
$$
-N(t,\omega)\leq \inf_{t_0\in\R}x(t,\theta_{t_0}\omega).
$$
Let
$$
m(t,\omega):=\inf_{t_0\in\R}x(t,\theta_{t_0}\omega)=\inf_{t_0\in\Q} x(t,\theta_{t_0}\omega).
$$
We have that $\Omega_0\ni\omega\mapsto m(t,\omega)\in\R^+$ is measurable and $m(t,\theta_\tau \omega)=m(t,\omega)$ for any
$\tau\in\R$.  By  the ergodicity of the metric dynamical system $(\Omega_0, \mathcal{F},\{\theta_t\}_{t\in\R})$, we have that
$m(t,\omega)=m(t) \quad{\rm for}\,\,  a.e \ \text{in} \ \omega.$

 Next, let $\tilde{u}_{0n}^*(x)=u_0^*(x+n)$. We have that $0\leq u_{0n}^*(x)\leq 1$ and $\tilde{u}_{0n}^*(x)\to 0$ as $n\to\infty$. By Lemma \ref{convergence-locally-lm} again, for every  $\omega\in\Omega_0$ and  $t>0$,
$$
u(t,x;\tilde{u}_{0n}^*,\theta_{t_0}\omega)\to 0\quad \text{as}\ n\to\infty
$$
uniformly in $t_0\in\R$ and locally uniformly in $x\in\R$. Observe that
$$
u(t,x;\tilde{u}_{0n}^*,\theta_{t_0}\omega)=u(t,x+n;u_{0}^*,\theta_{t_0}\omega)
$$
and the mapping $\R\ni x\mapsto u(t,x;u_{0}^*,\theta_{t_0}\omega) $ is decreasing.
Thus, there is $\tilde{N}(t,\omega)\in\mathbb{N}$ such that
$$
u(t,x;u_{0}^*,\theta_{t_0}\omega)\leq \frac{1}{4}, \quad \forall x\geq \tilde{N}(t,\omega), \quad \forall \ t_0\in\R.
$$
This implies that
\begin{equation}\label{fintiness-eq2}
N(t,\omega)\geq \sup_{t_0\in\R}x(t,\theta_{t_0}\omega)
\end{equation}
Let
$$
n(t,\omega):=\sup_{t_0\in\R}x(t,\theta_{t_0}\omega) =\sup_{t_0\in\Q} x(t,\theta_{t_0}\omega).
$$
By \eqref{fintiness-eq2}, we have that $-\infty< x(t,\omega)\leq n(t,\omega)\leq N(t,\omega)<\infty$. Hence $\Omega_0\ni\omega\mapsto m(t,\omega)\in\R^+$ is measurable and $m(t,\theta_\tau\omega)=m(t,\omega)$ for any $\tau\in\R$.
 By  the ergodicity of the metric dynamical system $(\Omega_0, \mathcal{F},\{\theta_t\}_{t\in\R})$, we have that
$m(t,\omega)=m(t) \quad{\rm for}\,\,  a.e \ \text{in} \ \omega.$
\end{proof}

Let $x_+(t,\omega,\mu)$ be such that
$$
u(t,x+x_+(t,\omega,\mu) + C(t,\omega,\mu);\phi_+^\mu(0,\cdot;\omega),\omega)=\frac{1}{2}.
$$

\begin{lem}
\label{lm1}
For any $t>0$, there holds
\begin{equation}
\label{convergence-eq}
u(t,x+x(t;\omega);u_0^*,\omega))\begin{cases} \ge u(t,x+x_+(t,\omega,\mu) + C(t,\omega,\mu);\phi_+^\mu(0,\cdot;\omega),\omega)\quad x<0\cr
\le u(t,x+x_+(t,\omega,\mu) + C(t,\omega,\mu);\phi_+^\mu(0,\cdot;\omega),\omega)\quad x>0.
\end{cases}
\end{equation}
\end{lem}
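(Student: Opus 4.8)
The plan is to deduce Lemma~\ref{lm1} directly from Lemma~\ref{lm0-5} by specializing the time-dependent reaction coefficient in Lemma~\ref{lm0-5} to $b(\cdot)=a(\theta_\cdot\omega)$. First I would fix $t>0$ and $\omega\in\Omega_0$ and set $b(t)=a(\theta_t\omega)$; by \eqref{omega-0} this $b$ lies in $C(\R,(0,\infty))$ with $0<\underline b=\underline a\le\overline a=\overline b<\infty$, so the whole apparatus preceding and including Lemma~\ref{lm0-5} applies, and moreover $u(s,x;u_0,\omega)=u(s,x;u_0,b)$ for every admissible $u_0$ and all $s>0$, $x\in\R$. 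Note also that for this $b$ one has $\underline\mu^*=\sqrt{\underline b}=\sqrt{\underline a}$, so the range $0<\mu<\tilde\mu<\min\{2\mu,\underline\mu^*\}$ fixed in the setup of Lemma~\ref{lm1} indeed gives $0<\mu<\underline\mu^*$, as required to invoke Lemmas~\ref{lm0-1}--\ref{lm0-5}.

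Next I would match up the notation. Since $C(0;b,\mu)=0$, the initial datum $\phi_+^\mu(0,\cdot;\omega)$ used here equals $\min\{1,e^{-\mu(\cdot)}\}=\phi_+^\mu(0,\cdot;b)$ in the notation preceding Lemma~\ref{lm0-5}; in particular, at $t=0$ there is no translation by $C$, so the two initial functions genuinely coincide. Comparing the defining equations, $x(t;\omega)$ is exactly the point denoted $x(t,b)$ in Lemma~\ref{lm0-5} (the $x$ with $u(t,x;u_0^*,b)=\tfrac12$), while $x_+(t,\omega,\mu)+C(t,\omega,\mu)$ is exactly the point denoted $x_+(t,b)$ there (the $x$ with $u(t,x;\phi_+^\mu(0,\cdot;b),b)=\tfrac12$). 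Uniqueness of these level points — hence well-posedness of the two notions — follows because $u_0^*$ and $\phi_+^\mu(0,\cdot;b)$ are nonincreasing and nonconstant, so by the parabolic maximum principle applied to $u_x$ the corresponding solutions are strictly decreasing in $x$ for every $t>0$. With these identifications, the inequalities asserted in \eqref{convergence-eq} are literally the conclusion of Lemma~\ref{lm0-5} for this choice of $b$, and the proof is complete.

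I do not expect a substantive obstacle: all the analytic content already resides in Lemma~\ref{lm0-5}, whose proof rests on the approximation Lemma~\ref{lm0-3} and the Angenent-type zero-number ordering Lemma~\ref{lm0-4}. The only point requiring care is the bookkeeping with the shift $C(t,\omega,\mu)$ — namely that $C(t,\omega,\mu)$ exactly absorbs the difference between the ``$x_+(t,b)$'' of Lemma~\ref{lm0-5} and the quantity ``$x_+(t,\omega,\mu)$'' introduced just above Lemma~\ref{lm1}, while no such shift appears at $t=0$ — and verifying that $\underline b,\overline b$ for $b(\cdot)=a(\theta_\cdot\omega)$ are the deterministic constants $\underline a,\overline a$ for $\omega\in\Omega_0$, which is immediate from \eqref{omega-0}.
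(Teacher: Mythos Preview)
Your proposal is correct and is essentially the same approach as the paper's own proof, which simply reads ``It follows from Lemma~\ref{lm0-5}.'' You have supplied the notational bookkeeping (identifying $x(t;\omega)$ with $x(t,b)$ and $x_+(t,\omega,\mu)+C(t,\omega,\mu)$ with $x_+(t,b)$ for $b(\cdot)=a(\theta_\cdot\omega)$) that the paper leaves implicit.
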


\begin{proof}
It follows from Lemma \ref{lm0-5}.
\end{proof}

\begin{lem}\label{lem2}
There is  {$\hat M>0$} such that
$$
 x(t,\omega)+x(s,\theta_t\omega)\le x(t+s,\omega)+{ \hat M}
$$
for all $t,s\ge 0$ and a.e. $\omega\in\Omega$.
\end{lem}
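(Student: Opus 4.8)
The plan is to exploit the cocycle (semigroup) property to turn the subadditivity inequality into a uniform comparison of median positions, and then to trap the solutions involved between explicit super‑ and sub‑solutions that follow the running average position $C(t;\omega,\mu)$ of \eqref{b-eq2}. The cases $t=0$ or $s=0$ are trivial since $x(0,\cdot)=0$, so fix $t,s>0$ and write $u^\omega(t,\cdot):=u(t,\cdot;u_0^*,\omega)$. By the cocycle property $u^\omega(t+s,\cdot)=u(s,\cdot;u^\omega(t,\cdot),\theta_t\omega)$, and since $u^\omega(t,\cdot)$ is nonincreasing with $u^\omega(t,x(t,\omega))=\tfrac12$, we have $u^\omega(t,\cdot)\ge\tfrac12 u_0^*(\cdot-x(t,\omega))$; the comparison principle and translation invariance then give $u^\omega(t+s,\,y+x(t,\omega))\ge u\bigl(s,y;\tfrac12 u_0^*,\theta_t\omega\bigr)$ for all $y\in\R$. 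Hence it is enough to produce a \emph{deterministic} $\hat M>0$ such that, for a.e.\ $\omega$ and all $s\ge0$, the point $\tilde x(s,\omega)$ where the nonincreasing function $u(s,\cdot;\tfrac12 u_0^*,\omega)$ equals $\tfrac12$ satisfies $\tilde x(s,\omega)\ge x(s,\omega)-\hat M$: applying this with $\omega$ replaced by $\theta_t\omega$ and using that $u^\omega(t+s,\cdot)$ is nonincreasing with value $\tfrac12$ at $x(t+s,\omega)$ then yields $x(t,\omega)+x(s,\theta_t\omega)\le x(t+s,\omega)+\hat M$.

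For the median estimate I would first get the easy upper bound. Fixing $0<\mu<\underline\mu^*=\sqrt{\underline a}$ and using $\tfrac12 u_0^*\le u_0^*\le\phi^\mu_+(0,\cdot)$ together with Lemma \ref{lm0-1} and \eqref{b-eq3}, both $u(s,\cdot;\tfrac12 u_0^*,\omega)$ and $u^\omega(s,\cdot)$ are $\le e^{-\mu(\,\cdot-C(s;\omega,\mu))}$, so $x(s,\omega),\,\tilde x(s,\omega)\le C(s;\omega,\mu)+\tfrac{\ln2}{\mu}$. It then suffices to prove the matching lower bound $\tilde x(s,\omega)\ge C(s;\omega,\mu)-c_2$ for a deterministic $c_2$. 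The bare sub‑solution $\phi^\mu_-$ of Lemma \ref{lm0-2} is not directly usable here, because it carries an exponential tail extending to $+\infty$ and hence does not lie below the right–compactly–supported datum $\tfrac12 u_0^*$. Instead I would combine the compactly supported, sine‑type profile $w^+$ used in the proof of Theorem \ref{spreading-speeds-tm}(i) with the bounded corrector $B(t)\in W^{1,\infty}_{\rm loc}(\R)\cap L^\infty(\R)$ of Lemma \ref{lm0-2} (for which $\gamma\le(1-\delta)a(\theta_t\omega)+B'(t)$ for all $t$, with $\gamma<\underline a$), setting
$$\underline\phi(t,x)=\alpha\,e^{\,p\,B(t)}\,w^+\!\bigl(x-C(t;\omega,\mu)-\xi_0\bigr)$$
for a small deterministic $\alpha>0$, a deterministic shift $\xi_0$, and a suitable exponent $p$. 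A computation in the spirit of \eqref{a-eq0} shows that $\underline\phi$ is a sub‑solution of $v_t=v_{xx}+a(\theta_t\omega)v(1-v)$ wherever positive, \emph{for every} $t\ge0$, that $\underline\phi(0,\cdot)\le\tfrac12 u_0^*$, and that $\sup_x\underline\phi(t,x)\ge\kappa_0>0$ is attained at a point $C(t;\omega,\mu)+O(1)$, where $\kappa_0$ and the $O(1)$ are deterministic because $\|B\|_\infty$ and all the parameters depend only on $\underline a,\overline a$ (Lemmas \ref{average-mean-lemma}, \ref{lm0-2} and \eqref{omega-0}). Consequently $u(s,\cdot;\tfrac12 u_0^*,\omega)\ge\kappa_0$ on an interval with left endpoint $\ge C(s;\omega,\mu)-c_2'$.

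It remains to raise the floor $\kappa_0$ to $\tfrac12$. By \eqref{omega-0} there is a deterministic $\Sigma_0$ with $\tfrac12\underline a\,\Sigma_0\le\int_r^{r+\Sigma_0}a(\theta_\tau\omega)\,d\tau\le 2\overline a\,\Sigma_0$ for all $r$ and a.e.\ $\omega$. Applying the explicit exponential‑convergence estimate from the proof of Theorem \ref{stability of const equi solu thm} to the spatially constant datum $\kappa_0$ over time $\Sigma_0$, and bounding the lateral spreading of the interface over $[s-\Sigma_0,s]$ by the heat semigroup (a displacement of size $O(\sqrt{\Sigma_0})$, hence deterministic), one obtains $u(s,\cdot;\tfrac12 u_0^*,\omega)\ge\tfrac12$ on a half‑line whose right endpoint is $\ge C(s-\Sigma_0;\omega,\mu)-c_3$; since $C(s;\omega,\mu)-C(s-\Sigma_0;\omega,\mu)=\int_{s-\Sigma_0}^s\frac{\mu^2+a(\theta_\tau\omega)}{\mu}\,d\tau\le(\mu+\tfrac{2\overline a}{\mu})\Sigma_0$ is bounded by a deterministic constant, this gives $\tilde x(s,\omega)\ge C(s;\omega,\mu)-c_2$, and together with the upper bound above, $\tilde x(s,\omega)-x(s,\omega)\ge-\bigl(c_2+\tfrac{\ln2}{\mu}\bigr)=:-\hat M$, completing the reduction.

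The main obstacle is the construction in the third paragraph: the sub‑solution $\underline\phi$ must simultaneously be supported in a window of bounded width (so that it fits under the compactly–supported–on–the–right datum $\tfrac12 u_0^*$, unlike the exponential sub‑solution of Lemma \ref{lm0-2}), must follow the oscillating quantity $C(t;\omega,\mu)$ for \emph{all} $t\ge0$ (which is exactly what the corrector $B$ of Lemma \ref{lm0-2} buys, the zero‑number machinery of Lemmas \ref{lm0-4}, \ref{lm0-5}, \ref{lm00}, \ref{lm1} being what underlies the uniform control of the interface), and must have amplitude, width and floor $\kappa_0$ controlled only by the deterministic least and greatest means $\underline a,\overline a$; once this is in hand, everything else is cocycle bookkeeping together with the already‑established quantitative stability of $u\equiv1$ (Theorem \ref{stability of const equi solu thm}) and the mean identities \eqref{omega-0}.
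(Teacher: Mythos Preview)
Your cocycle reduction in the first paragraph is sound, and in spirit it is the same doubling trick the paper uses, just run from below rather than from above. The gap is in the third paragraph: the lower bound $\tilde x(s,\omega)\ge C(s;\omega,\mu)-c_2$ for a fixed $0<\mu<\sqrt{\underline a}$ and a deterministic $c_2$ is in fact \emph{false}. Already in the autonomous case $a\equiv a$ one has $C(s;\mu)=(\mu+a/\mu)s$ while the $\tfrac12$--level of the solution with Heaviside-type datum lies at $2\sqrt a\,s+o(s)$; since $\mu+a/\mu>2\sqrt a$ whenever $\mu\ne\sqrt a$, the quantity $C(s;\mu)-\tilde x(s)$ diverges to $+\infty$. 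The same obstruction kills the proposed sub-solution: writing $\underline\phi(t,x)=\alpha e^{pB(t)}w^+(x-C(t;\omega,\mu)-\xi_0)$ and passing to the moving frame gives
\[
(w^+)''+c(t;\omega,\mu)(w^+)'+\bigl[\delta a(\theta_t\omega)-pB'(t)\bigr]w^+\ge 0,
\]
and for $w^+(\xi)=e^{-\lambda\xi}\sin(\nu\xi)$ the cross term $\nu\bigl(c(t;\omega,\mu)-2\lambda\bigr)e^{-\lambda\xi}\cos(\nu\xi)$ can only be made nonnegative on $(0,\pi/\nu)$ if $c(t;\omega,\mu)\equiv 2\lambda$, which is impossible for time-varying $a$. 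So a compactly supported profile cannot be made to travel with the super-critical position $C(\cdot;\omega,\mu)$.

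The paper circumvents this by using the doubling trick in the opposite direction: from $u_0^*\le 2\,u(t,\cdot+x(t,\omega);u_0^*,\omega)$ one gets $u(t+s,x(t,\omega)+x(s,\theta_t\omega);u_0^*,\omega)\ge\tfrac14$, hence $x(t,\omega)+x(s,\theta_t\omega)\le\tilde x(t+s,\omega)$, where $\tilde x$ is the $\tfrac14$--level of the \emph{same} solution. The problem is then reduced to bounding the interface width $\tilde x(\tau,\omega)-x(\tau,\omega)$ uniformly in $\tau$ and in the time origin. This is done via the zero-number comparison (Lemma \ref{lm1}): that width is dominated by the corresponding width of $u(\cdot;\phi_+^\mu(0,\cdot;\omega),\omega)$, and for the latter the exponential sub-solution of Lemma \ref{lm0-2} fits underneath $\phi_+^\mu$ (its right tail is no obstacle here), so Lemma \ref{lm00} gives uniform limits at $\pm\infty$ in the moving frame and hence a bounded width $M(\omega)$. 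Ergodicity then makes $M(\omega)$ an a.e.\ constant $\hat M$. In short, rather than trying to pin $\tilde x$ and $x$ separately to $C(\cdot;\omega,\mu)$ (which cannot work), one should control their \emph{difference} through the steepness comparison with the $\phi_+^\mu$ solution.
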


\begin{proof}
First, let $\tilde x(t,\omega)$ and $\tilde x_+(t,\omega)$ be such that
$$
u(t,\tilde x(t,\omega);u_0^*,\omega)=\frac{1}{4}
\quad {\rm and}\quad
u(t,\tilde x_+(t,\omega,\mu) +C(t,\omega,\mu); \phi_+^\mu(0,\cdot;\omega),\omega)=\frac{1}{4},
$$
respectively.
Since the function $x\mapsto u(t, x;u_0,\omega) $ is decreasing, we have
\begin{equation}\label{eq6}
\tilde x(t,\omega)>x(t,\omega).
\end{equation}
 Moreover, for each $t>0$, $\tilde x(t,\omega)$ is measurable in $\omega$, and for each  $\omega\in\Omega$,
$\tilde x(t,\theta_{t_0}\omega)$ is continuous in $(t,t_0)\in(0,\infty)\times \R$.
By Lemma \ref{lm1},
\begin{align}\label{Eq_7}
\tilde x(t,\omega)-x(t,\omega)&\le (\tilde x_+(t,\omega,\mu) -C(t,\omega,\mu))-(x_+(t,\omega,\mu) -C(t,\omega,\mu))\nonumber\\
&=\tilde{x}_+(t,\omega,\mu)-x_+(t,\omega,\mu),\,\, \forall\, t>0.
\end{align}
Let
$$
M(\omega)=\sup_{t>0,t_0\in\R}\big(\tilde{x}(t,\theta_{t_0}\omega)-x(t,\theta_{t_0}\omega)\big) =\sup_{t\in (0,\infty)\cap\Q, t_0\in\Q} \big(\tilde{x}(t,\theta_{t_0}\omega)-x(t,\theta_{t_0}\omega)\big).
$$

Note that
\begin{align*}
\frac{1}{2}=&u(t,x_+(t,\theta_{t_0}\omega,\mu)+C(t,\theta_{t_0}\omega,\mu);\phi^\mu_+(\cdot,\cdot;\theta_{t_0}\omega),\theta_{t_0}\omega),\quad \forall\ t>0, \forall\ t_0\in\R,
\end{align*}
and
\begin{align*}
\frac{1}{4}=u(t,\tilde{x}_+(t,\theta_{t_0}\omega,\mu)+C(t,\theta_{t_0}\omega,\mu);\phi^\mu_+(\cdot,\cdot;\theta_{t_0}\omega),\theta_{t_0}\omega),\quad \forall\ t>0, \forall\ t_0\in\R.
\end{align*}
By Lemma \ref{lm00},  there is a positive constant $K(\omega)$ such that
\begin{equation}\label{x_+bound}
 | {x}_+(t,\theta_{t_0}\omega,\mu)|\leq K(\omega) \quad \text{and} \quad | \tilde{x}_+(t,\theta_{t_0}\omega,\mu)|\leq K(\omega),\quad \forall\ t>0, \forall\ t_0\in\R.
\end{equation}
This combined with \eqref{Eq_7} implies that  $M(\omega)<\infty$.

 Note that the function $\Omega_0\ni\omega\mapsto M(\omega)\in\R^+$ is measurable and invariant. By  the ergodicity of the metric dynamical system $(\Omega_0, \mathcal{F},\{\theta_t\}_{t\in\R})$, we have { that there are an invariant} measurable set $\tilde{\Omega}$ with $\P(\tilde\Omega)=1$ and a positive constant {$\hat M$} such that
\begin{equation}\label{eq8}
M(\omega)={\hat M}, \quad \forall\ \omega\in\tilde{\Omega}.
\end{equation}

Second, note that
$$
u_0^*(x)\le 2 u(t,x+x(t,\omega);u_0^*,\omega)
$$
Hence,
\begin{align*}
u(s,x;u_0^*,\theta_t\omega)&\le u(s,x;2u(t,\cdot+x(t,\omega);u_0^*,\omega),\theta_t\omega)\\
&\le 2u(s,x;u(t,\cdot+x(t,\omega);u_0^*,\omega),\theta_t\omega)\\
&=2 u(s,x+x(t,\omega);u_0^*,\omega).
\end{align*}
This implies that
$$
u(s,x(s,\theta_t\omega)+x(t,\omega);u_0^*,\omega)\ge \frac{1}{4}.
$$
It then follows from \eqref{eq8} that
$$
x(s,\theta_t\omega)+x(t,\omega)\le \tilde x(t+s,\omega)\le x(t+s,\omega)+\hat M.
$$
The lemma follows.
\end{proof}

We now prove Theorem \ref{spreading-speeds-tm-1}.

\begin{proof}[Proof of Theorem \ref{spreading-speeds-tm-1}]
(i)  We first prove that there is $c^*$ such that \eqref{average-speed-eq} holds with $\hat c^*$ being replaced by $c^*$.
 To this end, let $y(t,\omega)=-x(t,\omega)+{ \hat M}$ where {$\hat M$} is given by Lemma \ref{lem2}. Then, by Lemma \ref{lem2}
$$
y(t+s,\omega)=-x(t+s,\omega)+\hat M\le -x(t,\omega)-x(s,\theta_t\omega) +2\hat M = y(t,\omega)+y(s,\theta_t\omega)
$$
a.e in $\omega$. By Lemma \ref{lm0-3-1}, $y(t,\cdot)\in L^{1}(\Omega) $.  It then follows from the subadditive ergodic theorem  that there is $c^*\in\R$ such that
$$
\lim_{t\to\infty}\frac{y(t,\omega)}{t}=c^*\quad {\rm for}\,\, a.e. \,\,  \omega\in\Omega.
$$

 Next, we claim that \eqref{asymptoc-tail-eq1} and \eqref{asymptoc-tail-eq2} hold with $\hat c^*$ being replaced by $c^*$.
In fact,  by \eqref{convergence-eq}, \eqref{x_+bound}, and Lemma \ref{lm00},
\begin{align*}
0\leq \sup_{x\geq (c^*+h)t}u(t,x;u_0^*,\omega)\leq& u(t,(c^*+h)t;u_0*,\omega)\cr
\leq&  u(t,(c^*+h)t-x(t;\omega)+x_+(t,\omega,\mu)+C(t,\omega,\mu);\phi_+^\mu(0,\cdot;\omega),\omega) \cr
 \to & 0\ \ \text{as}\ \ t\to \infty,\ \forall h>0,
\end{align*}
and
\begin{align*}
1\geq \inf_{x\leq (c^*-h)t}u(t,x;u_0,\omega)\geq& u(t,(c^*-h)t;u_0,\omega)\cr
\geq & u(t,(c^*-h)t-x(t;\omega)+x_+(t,\omega,\mu)+C(t,\omega,\mu);\phi_+^\mu(0,\cdot;\omega),\omega) \cr
\to & 1\ \ \text{as}\ \ t\to \infty,\ \forall h>0.
\end{align*}
 Therefore, \eqref{asymptoc-tail-eq1} and \eqref{asymptoc-tail-eq2} hold with $\hat c^*$ being replaced by $c^*$.

 Now, we prove that $c^*=\hat c^*$.  By {the comparison principle} for parabolic equations,
$$
u(t,x;u_0^*,\omega)\leq e^{-\mu(x-\frac{1}{\mu}\int_0^{t}(\mu^2+a(\theta_{\tau}\omega)d\tau))}, \forall\ t, \mu>0, \forall\ x\in\R.
$$
Hence
$$
\frac{1}{2}\leq e^{-\mu(x(t,\omega)-\frac{1}{\mu}\int_0^{t}(\mu^2+a(\theta_{\tau}\omega)d\tau))}, \ \forall\ t, \mu>0.
$$
This implies that
$$
\frac{x(t,\omega)}{t}-\frac{\ln(2)}{t\mu}\leq \frac{1}{t\mu}\int_0^{t}(\mu^2+a(\theta_{\tau}\omega)d\tau).
$$
Letting $t\to\infty$, we obtain that
$$
c^*\leq \frac{\mu^2+\hat{a}}{\mu}, \quad \forall\ \mu>0.
$$
Taking $\mu=\sqrt{\hat{a}}$, we obtain that
$$
c^*\leq \hat c^*=2\sqrt{\hat{a}}.
$$
{It then remains} to prove that
$$
c^*\ge  \hat c^*=2\sqrt{\hat{a}}.
$$
We prove this by contradiction.

Assume that $c^*<  \hat c^*=2\sqrt{\hat{a}}$. Then there are $h>0$ and $0<\delta<1$ such that
$$
c^*<c:=c^*+h<2\sqrt{\delta \hat a}.
$$
By \eqref{asymptoc-tail-eq1}, for a.e. $\omega\in\Omega$,
$$
\lim_{t\to\infty} \sup_{x\ge ct} u(t,x;u_0^*,\omega)=0.
$$
Fix such $\omega$.
There are $0<\delta^{'}< 1$ and $T>0$ such that
$$
\delta^{'}\frac{1}{t}\int_0^t a(\theta_\tau\omega)d\tau>\delta { \hat{a}}
$$
and
$$
u(t,x;u_0^*,\omega)\le 1-\delta^{'}\quad \forall \,\, t\ge T,\,\, x\ge ct.
$$
As in the proof of Theorem \ref{spreading-speeds-tm}(i),  let $L=\frac{2\pi}{\sqrt {4{ \hat{a}}\delta-c^2}}$ and
 $$
 w^+(x)=e^{-\frac{c}{2}x}\sin \Big(\frac{\sqrt {4{ \hat{a}}\delta-c^2}}{2}x\Big).
 $$
 By the similar arguments as those in Theorem \ref{spreading-speeds-tm}(i), we have
 \begin{align*}
 u(t,x+ct;u_0^*,\omega)&\ge \alpha e^{\int_T^t (\delta^{'}a(\theta_\tau\omega)-\delta \hat a)d\tau} w^+(x)\\
 &=\alpha e^{-\int_0^T (\delta^{'}a(\theta_\tau\omega)-\delta\hat a)d\tau}e^{\int_0^ t (\delta^{'}a(\theta_\tau\omega)-\delta \hat a)d\tau} w^+(x)\\
 &\ge \alpha e^{-\int_0^T (\delta^{'}a(\theta_\tau\omega)-\delta\hat a)d\tau} w^+(x)
 \end{align*}
 for $0\le x\le L$ and $t\ge T$, where $\alpha=\sup_{0\le x\le L}u(T,x+cT;u_0^*,\omega)$.
 This implies that
 $$
 \lim_{t\to\infty} \sup_{x\ge ct} u(t,x;u_0^*,\omega)>0,
 $$
 which is a contradiction. Hence   $c^*= \hat c^*=2\sqrt{\hat{a}}$.

(ii) For any given $u_0\in\tilde X_c^+$, there are $0<\alpha\le 1\le \beta$ and $x_-<x_+$ such that
$$
\alpha u_0^*(x+x_+)\le u_0(x)\le \beta u_0^*(x+x_-)\quad \forall\,\, x\in\R.
$$
By {the comparison principle} for parabolic equations, we have
$$
\alpha u(t,x;u_0^*(\cdot+x_+),\omega)\le u(t,x;u_0,\omega)\le \beta u(t,x;u_0^*(\cdot+x_-),\omega)\quad \forall\, t\ge 0,\,\, x\in\R.
$$
This together  with \eqref{asymptoc-tail-eq1} implies that there is a measurable set $\Omega_1\subset\Omega$  with $\P(\Omega_1)=1$ such that
$$
\lim_{t\to\infty} \sup_{x\ge (\hat c^*+h)t} u(t,x;u_0,\omega)=0,\quad \omega\in\Omega_1,\ \forall\ h>0,
$$
and
\begin{equation}\label{zz-zz-zz1}
\liminf_{t\to\infty} \inf_{x\le (\hat c^*-h)t} u(t,x;u_0,\omega)\ge \alpha,\quad  \quad \omega\in\Omega_1,\ \forall\ h>0.
\end{equation}
We claim that
\begin{equation}\label{zz-zz-zz2}
\liminf_{t\to\infty} \inf_{x\le (c^*-h)t}u(t,x;u_0,\omega)=1\quad  {\rm for}\,\,\,\omega\in\Omega_1, \ \forall\ h>0.
\end{equation}
Indeed, let $\omega\in\Omega_1$ and $h>0$ be fixed. Let $\{x_n\}$ and $\{t_n\}$ with $t_n\to\infty$  and $x_n\leq (\hat c^*-h)t_n$ be such that
\begin{equation}\label{zz-zz-zz3}
\liminf_{t\to\infty} \inf_{x\le (\hat c^*-h)t}u(t,x;u_0,\omega)=\lim_{n\to\infty}u(t_n,x_n;u_0,\omega).
\end{equation}
For every $0<\varepsilon\ll \frac{1}{2}$, Theorem \ref{stability of const equi solu thm} implies that there is $T_\varepsilon>0$ such that
\begin{equation}\label{zz-zz-zz4}
1-\varepsilon\leq u(t,x;\frac{\alpha}{2},\theta_s\omega),\quad \forall\ x\in\R,\ s\in\R,\ t\geq T_\varepsilon.
\end{equation}
Consider a sequence of $u_{0n}\in C^b_{\rm unif}(\R)$ satisfying that
$$ u_{0n}(x)=
\begin{cases}\frac{\alpha}{2}, \quad x\leq \frac{1}{2}ht_n-2(\hat c^*-\frac{1}{•2}h)T_{\varepsilon}\cr
0, \qquad x\geq \frac{1}{2}ht_n-(\hat c^*-\frac{1}{2}h)T_{\varepsilon}.
\end{cases}
$$
Note that
$$
x\leq \frac{1}{2}ht_n-(\hat c^*-\frac{1}{2}h)T_{\varepsilon}\Rightarrow x+ x_n\leq (\hat c^*-\frac{1}{2}h)(t_n-T_\varepsilon).
$$
By \eqref{zz-zz-zz1}, there is $N_1\gg 1$  such that
$$
u(t_n-T_\varepsilon, x+x_n;u_0,\omega)\geq u_{0n}(x),\quad \forall\ x\in\R, n\geq N_1.
$$
By the comparison principle for parabolic equations, we then have that
$$
u(t+t_n-T_\varepsilon,x+x_n;u_0,\omega)\geq u(t,x;u_{0n},\theta_{t_n-T_\varepsilon}\omega), \quad \forall x\in\R, \forall\ t\geq 0.
$$
In particular, taking $t=T_\varepsilon$ and $x=0$,  we obtain
\begin{equation}\label{zz-zz-zz5}
u(t_n,x_n;u_0,\omega)\geq u(T_\varepsilon,x;u_{0n},\theta_{t_n-T_\varepsilon}\omega).
\end{equation}
 Note that $u_{0n}(x)\to\frac{\alpha}{2}$ as $n\to\infty$. Letting $t\to\infty$ in \eqref{zz-zz-zz5},  it follows from \eqref{zz-zz-zz4} and Lemma \ref{convergence-locally-lm} that
$$
\lim_{n\to\infty}u(t_n,x_n;u_0,\omega)\geq 1-\varepsilon.
$$
Letting $\varepsilon\to0$ in the last inequality, it follows from \eqref{zz-zz-zz3} that
$$
\liminf_{t\to\infty} \inf_{x\le (\hat c^*-h)t}u(t,x;u_0,\omega)\geq 1, \quad  {\rm for}\,\,\,\omega\in\Omega_1, \ \forall\ h>0.
$$
It is clear that
$$
\liminf_{t\to\infty} \inf_{x\le (\hat c^*-h)t}u(t,x;u_0,\omega)\leq 1, \quad  {\rm for}\,\,\,\omega\in\Omega_1, \ \forall\ h>0.
$$
The Claim thus follows and (ii) is proved.
\end{proof}

{ The following corollary follows directly from Lemma \ref{real-noise-lm2}  and Theorem \ref{spreading-speeds-tm-1}.

 \begin{coro}
\label{spreading-cor2}
Assume {\bf (H3)}.
  Let $Y(\omega)$ be the random equilibrium solution of \eqref{real-noise-eq} given in \eqref{random-equilibrium-1} and let $U^*_0(x;\omega)=Y(\omega)$ for $x<0$ and $U^*_0(x;\omega)=0$ for $x>0$. Then,
  \begin{equation*}
  \lim_{t\to\infty}\frac{X(t,\omega)}{t}=2 \quad \text{for a.e}\ \omega\in\Omega,
  \end{equation*}
where $X(t,\omega)$  is such that $u(t,X(t,\omega);U^*_0(\cdot;\omega),\omega)=\frac{1}{2}Y(\omega)$, and
 \begin{equation*}
\lim_{t\to\infty}\sup_{x\ge (2+h)t}\frac{u(t,x;U^*_0(\cdot;\omega),\omega)}{Y(\theta_{t}\omega)}=0, \quad \forall\ h>0, \ \text{a.e}\ \omega\in\Omega,
\end{equation*}
and
\begin{equation*}
\lim_{t\to\infty}\inf_{x\le (2- h)t}\frac{u(t,x;U^*_0(\cdot;\omega),\omega)}{Y(\theta_{t}\omega)}=1, \quad \forall\ h>0, \ \text{a.e}\ \omega\in\Omega,
\end{equation*}
 where $u(t,x;U^*_0(\cdot;\omega),\omega)$ is the solution of \eqref{real-noise-eq} with  $u(0,x;U^*_0(\cdot;\omega),\omega)=U^*_0(x;\omega)$.
\end{coro}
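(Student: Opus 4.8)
The plan is to push the problem onto \eqref{Main-eq} via the substitution used throughout Section 3 and then read off the conclusions from Theorem \ref{spreading-speeds-tm-1}. First I would set $v(t,x;\omega):=u(t,x;U_0^*(\cdot;\omega),\omega)/Y(\theta_t\omega)$. By Theorem \ref{real-noise-tm1} and the reduction preceding \eqref{real-noise-eq1}, $v$ solves \eqref{Main-eq} with $a(\cdot)=Y(\cdot)$; and since $Y(\theta_0\omega)=Y(\omega)$, the initial datum becomes $v(0,x;\omega)=U_0^*(x;\omega)/Y(\omega)$, which equals $1$ for $x<0$ and $0$ for $x>0$, i.e. $v(0,\cdot;\omega)=u_0^*$. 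Thus $v(t,x;\omega)$ is precisely the solution of \eqref{Main-eq} with growth rate $Y(\theta_\cdot\omega)$ and front-like datum $u_0^*$, and $u(t,x;U_0^*(\cdot;\omega),\omega)=Y(\theta_t\omega)\,v(t,x;\omega)$. Next, by Lemma \ref{real-noise-lm2}(4), $\hat{Y}_{\inf}(\omega)=1+\underline\xi>0$ and $\hat{Y}_{\sup}(\omega)=1+\overline\xi<\infty$ for a.e. $\omega$, so {\bf (H1)} holds with $a=Y$; and by Lemma \ref{real-noise-lm2}(3) the mean of $Y$ is $\hat a=1$, hence $\hat c^*=2\sqrt{\hat a}=2$. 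Therefore Theorem \ref{spreading-speeds-tm-1} applies to $v$ with $\hat c^*$ replaced by $2$: estimates \eqref{asymptoc-tail-eq1}--\eqref{asymptoc-tail-eq2} for $v$ say exactly that, for a.e. $\omega$ and all $h>0$, $\sup_{x\ge(2+h)t}v(t,x;\omega)\to0$ and $\inf_{x\le(2-h)t}v(t,x;\omega)\to1$ as $t\to\infty$, and since $v(t,x;\omega)=u(t,x;U_0^*(\cdot;\omega),\omega)/Y(\theta_t\omega)$ these are the two displayed tail limits of the corollary.

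It remains to identify the front position. Let $x(t,\omega)$ be the level-$\tfrac12$ point of $v(t,\cdot;\omega)$ furnished by Theorem \ref{spreading-speeds-tm-1}, so $x(t,\omega)/t\to2$ for a.e. $\omega$. From $u(t,\cdot;U_0^*(\cdot;\omega),\omega)=Y(\theta_t\omega)v(t,\cdot;\omega)$, the point $X(t,\omega)$ is characterized by $v(t,X(t,\omega);\omega)=\dfrac{Y(\omega)}{2\,Y(\theta_t\omega)}$. By Lemma \ref{real-noise-lm2}(1) there are, for a.e. $\omega$, constants $0<m(\omega)\le M(\omega)<\infty$ with $Y(\theta_t\omega)\in[m(\omega),M(\omega)]$ for all $t$; hence the level $\dfrac{Y(\omega)}{2Y(\theta_t\omega)}$ stays in a fixed compact subinterval $[a_1(\omega),a_2(\omega)]\subset(0,\infty)$, and as long as this level lies in $(0,1)$ the point $X(t,\omega)$ is well-defined. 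I would then prove, along the exact lines of the proofs of Lemmas \ref{lm00}, \ref{lm1} and \ref{lem2}, that for every compact $[a_1,a_2]\subset(0,1)$ the level-$a$ point $x_a(t,\omega)$ of $v(t,\cdot;\omega)$ satisfies $\sup_{t>0,\,a\in[a_1,a_2]}|x_a(t,\omega)-x(t,\omega)|<\infty$: Lemma \ref{lm00} provides the uniform-in-$t$ decay of the shifted profile $v(t,\,\cdot+C(t;\omega,\mu);\omega)$ away from the comparison point, and Lemma \ref{lm1} converts this into the bound on level-set displacements, just as \eqref{x_+bound} is obtained in the proof of Lemma \ref{lem2}. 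Consequently $|X(t,\omega)-x(t,\omega)|$ is bounded uniformly in $t$, and therefore $\lim_{t\to\infty}X(t,\omega)/t=\lim_{t\to\infty}x(t,\omega)/t=2$ for a.e. $\omega$, which is the first assertion.

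Everything in the first two paragraphs is routine once Theorem \ref{spreading-speeds-tm-1} and Lemma \ref{real-noise-lm2} are in hand; the one step needing genuine work is the uniform-in-time control of the width of the transition layer of $v(t,\cdot;\omega)$ — that the level set $\{v(t,\cdot;\omega)=a\}$ trails $\{v(t,\cdot;\omega)=\tfrac12\}$ within a $t$-independent window as $a$ ranges over a compact subinterval of $(0,1)$. This is the same zero-number and sub/super-solution mechanism already assembled in Lemmas \ref{lm0-2}, \ref{lm00}, \ref{lm1} and \ref{lem2}; the only thing to verify is that it persists when the fixed level $\tfrac12$ is replaced by the bounded, time-dependent level $\tfrac{Y(\omega)}{2Y(\theta_t\omega)}$, together with the minor point that this level stays $<1$ for all large $t$ so that $X(t,\omega)$ is defined (alternatively, tracking instead $\tfrac12 Y(\theta_t\omega)$, the left state at time $t$, makes $X(t,\omega)=x(t,\omega)$ outright and removes this subtlety).
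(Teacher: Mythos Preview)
Your reduction is exactly the paper's: divide by $Y(\theta_t\omega)$, observe that $v$ solves \eqref{Main-eq} with $a=Y$ and initial datum $u_0^*$, invoke Lemma~\ref{real-noise-lm2} to check {\bf (H1)} and compute $\hat a=1$, and then read off everything from Theorem~\ref{spreading-speeds-tm-1}. The paper's proof is literally the one line ``follows directly from Lemma~\ref{real-noise-lm2} and Theorem~\ref{spreading-speeds-tm-1},'' so on the two tail limits you and the paper are identical.

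Where you go further than the paper is in worrying about $X(t,\omega)$. You are right that with the stated definition $u=\tfrac12 Y(\omega)$ one gets $v=\tfrac{Y(\omega)}{2Y(\theta_t\omega)}$, which is a \emph{time-dependent} level and need not stay below $1$ (Lemma~\ref{real-noise-lm2}(1) gives a positive infimum for $Y(\theta_\cdot\omega)$ but no reason it should exceed $\tfrac12 Y(\omega)$). So your ``minor point that this level stays $<1$ for all large $t$'' is not actually justified, and $X(t,\omega)$ may fail to exist along a sequence of times. The paper does not address this at all. Your parenthetical fix---replace $\tfrac12 Y(\omega)$ by $\tfrac12 Y(\theta_t\omega)$, i.e.\ half the instantaneous equilibrium---is the clean resolution: then $X(t,\omega)=x(t,\omega)$ on the nose and the first assertion is immediate. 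If one insists on the level $\tfrac12 Y(\omega)$, your bounded-transition-layer argument via Lemmas~\ref{lm00} and~\ref{lm1} is the right mechanism \emph{on the set of times where the level lies in $(0,1)$}; one would then need either to argue that this set has asymptotic density one (which does follow from ergodicity of $Y(\theta_\cdot\omega)$), or simply accept that $X(t,\omega)/t\to 2$ along that set. Either way your diagnosis is sharper than the paper's one-line proof.
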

}

\section{Spreading speeds of nonautonomous Fisher-KPP equations}\label{Sec nonautonomous}

In this section we consider the nonautonomous Fisher-KPP equation \eqref{nonautonomous-eq} and prove Theorem \ref{nonautonomous-thm3}.

\begin{proof}[Proof of Theorem \ref{nonautonomous-thm3}]
First, we  prove \eqref{nonauton-spreading-speed-eq1}. To this end,
 for given   $0<c<2\sqrt{\underline{a}_0}$, choose $b>c$ and $0<\delta<1$ such that $
c<2\sqrt{b}<2\sqrt{\delta \underline{a}_0}.$
By the proof of Lemma \ref{average-mean-lemma}, there are $\{t_{k}\}_{k\in\Z}$ with $t_k<t_{k+1}$, $t_{k}\to\pm \infty$ as $k\to\pm\infty$ and  $A\in W^{1,\infty}_{loc}(\R)\cap L^\infty(\R)$ such that  $A\in C^1(t_k,t_{k+1})$ for every $k$ and
$$
b\leq \delta a_0(t)-A'(t),\quad \text{for}\,\, t\in (t_k,t_{k+1}),\,\, k\in\Z.
$$
Let $
\sigma=\frac{(1-\delta)e^{-\|A\|_{\infty}}}{\|u_0\|_{\infty}+1}$
and $v(t,x;b)$ be the solution of the PDE
$$
\begin{cases}
v_t=v_{xx}+bv(1-v), \quad x\in\R, t>0,\cr
v(0,x)=u_0(x), \quad x\in\R.
\end{cases}
$$
By Lemma \ref{spreeding-speed-lem2},  we have that
\begin{equation}\label{spreading-lemma-2-eq1-1}
\liminf_{t\to\infty}\min_{|x|\leq ct}v(t,x;b)=1.
\end{equation}

For given $s\in\R$, let   $\tilde{v}(t,x;s)=\sigma e^{A(t+s)}v(t,x;b)$.  By the similar arguments to those in Lemma \ref{spreeding-speed-lem3},
 it can be proved  that
$$
\sigma e^{-\|A\|_{\infty}}v(t,x,b)\leq \tilde{v}(t,x;s)\leq u(t,x;u_0,\sigma_s a_0),\quad \forall\ x\in\R,\ s\in\R, \ t\geq 0.
$$
This combined with \eqref{spreading-lemma-2-eq1-1} yields that
$$
0<\sigma e^{-\|A\|_{\infty}}\leq \liminf_{t\to\infty}\inf_{s\in\R, |x|\leq ct}u(t,x;u_0,\sigma_s a_0), \quad \forall\ 0<c<2\sqrt{\underline{a}_0}.
$$
By the arguments in Lemma \ref{spreeding-speed-lem1}, it can be proved that
$$
\lim_{t\to\infty}\inf_{s\in\R,|x|\leq ct}|u(t,x;u_0,\sigma_{s}a_0)-1|=0, \quad \forall u_0\in X_c^+,\ \forall 0<c<2\sqrt{\underline{a}_0}.
$$
\eqref{nonauton-spreading-speed-eq1} then follows.

Next,  we  prove \eqref{nonauton-spreading-speed-eq2}. To this end,  for any given $u_0\in X_c^+$, suppose that ${\rm supp}(u_0)\subset(-R,R)$. For every $\mu>0$, let $C_\mu(t,s)=\int_s^{s+t}\frac{\mu^2+a_0(\tau\omega)}{\mu}d\tau$ and $\phi^\mu(x)=\|u_0\|_{\infty}e^{-\mu ( x-R)} $ and $\tilde{\phi}^{\mu}_{\pm}(t,x;s)=\phi_{\pm}^\mu(\pm x-C_\mu(t,s))$ for every $x\in\R$ and $t\geq 0$. It is not difficult to see that
$$
\partial_t\tilde{\phi}_{\pm}^\mu-\partial_{xx}\tilde{\phi}_{\pm}^\mu-a_0({s+t})\tilde{\phi}_{\pm}^\mu(1-\tilde{\phi}_{\pm}^\mu)
=a_0({s+t})\left( \tilde{\phi}_{\pm}^\mu\right)^2\geq 0, \ x\in\R,\ t>0,
$$
and
$$
u_0(x)\leq  \tilde{\phi}_{\pm}^\mu(0,x;s), \quad \forall x\in\R,\ \forall\ s\in\R.
$$
By {the comparison principle} for parabolic equations, we then have that
$$
u(t,x;u_0,\sigma_s a_0)\leq \tilde{\phi}_{\pm}^\mu(t,x;s)=\|u_0\|_{\infty}e^{-\mu ({\pm}x-R\mp C_\mu(t,s))},\quad \forall\ x,s\in\R, \forall t>0, \forall \mu>0.
$$
This implies that
$$
\limsup_{t\to\infty}\sup_{s\in\R,|x|\ge ct} u(t,x;u_0,\sigma_s a_0)=0\quad \forall \,\, \mu>0,\,\, c>\frac{\mu^2+\bar a_0}{\mu}.
$$
For any $c>2\sqrt{\bar a_0}=\inf_{\mu>0}\frac{\mu^2+\sqrt{\bar a_0}}{\mu}$, choose $\mu>0$ such that
$c>\frac{\mu^2+\sqrt{\bar a_0}}{\mu}>\bar c^*$, we  have
$$
\limsup_{t\to\infty}\sup_{s\in\R,|x|\ge ct} u(t,x;u_0,\theta_s\omega)=0.
$$
\eqref{nonauton-spreading-speed-eq2} then follows.
\end{proof}

 We conclude this section with some example of explicit function $a_0(t)$ satisfying {\bf (H2)}.

 \medskip

 Define the sequences $\{l_{n}\}_{n\geq 0}$ and $\{L_n\}_{n\geq 0}$ inductively by
 \begin{equation}\label{ln-Ln-def}
 l_0=0, \quad L_n=l_n+\frac{1}{2^{2(n+1)}}, \quad l_{n+1}=L_n+n+1, \ \ n\geq 0.
 \end{equation}
 Define $a_0(t)$ such that $a_0(-t)=a_0(t)$ for $t\in\R$ and
 \begin{equation}\label{a(t)-def}
 a_0(t)=\begin{cases}
 f_n(t)\qquad \text{if}\ t\in [l_n,L_n]\cr
 g_n(t) \qquad\ \text{if}\ t\in[L_n,l_{n+1}]\cr
 \end{cases}
 \end{equation}
 for $n\ge 0$,
 where $g_{2n}(t)=1$ and $g_{2n+1}(t)=2$ for $n\ge 0$, and $f_0(t)=1$, for $n\ge 1$, $f_{n}$ is H\"older's continuous on  $[l_{n},L_{n}]$, $f_n(l_n)=g_n(l_n)$, $f_{n}(L_{n})=g_n(L_{n})$, and satisfies
 $$
1\leq f_{2n}(t)\leq 2^n, \quad { \max_{t\in [l_{2n},L_{2n}]}}f_{2n}(t)=2^n, 
 $$
 and
 $$
\frac{1}{2^{n+1}}\leq f_{2n+1}(t)\leq 2, \quad {  \min_{t\in [l_{2n+1},L_{2n+1}]}}f_{2n+1}(t)=2^{-(n+1)}. 
 $$
 It is clear  that $a_0(t)$ is locally H\"older's continuous, $
\inf_{t\in\R} a_{0}(t)=0$, and  $\sup_{t\in\R} a_{0}(t)=\infty$. Moreover, it can be verified that
$$
\underline{a_0}=1\quad  \text{and} \quad \overline{a}_0=2.
$$
Hence $a_0(t)$ satisfies {\bf (H2)}.

\medskip


\end{document}